\documentclass[pdflatex]{sn-jnl}
\usepackage{indentfirst}
\usepackage{subcaption}
\usepackage{graphicx}
\usepackage{amsmath}
\usepackage{amssymb}
\usepackage{xcolor}
\usepackage[shortlabels]{enumitem}

\providecommand{\doi}[1]{\url{https://doi.org/#1}}
\providecommand{\doiurl}[1]{\url{https://doi.org/#1}}

\numberwithin{equation}{section}

\theoremstyle{thmstyleone}
\newtheorem{thm}{Theorem}[section]
\newtheorem{lem}[thm]{Lemma}
\newtheorem{prop}[thm]{Proposition}

\theoremstyle{thmstylethree}
\newtheorem{deff}{Definition}[section]

\theoremstyle{thmstyletwo}
\newtheorem{rem}[thm]{Remark}
\begin{document}

\title{Spectral Theory for Schrödinger Operators with Interface Transmission Conditions}

\author[1]{\fnm{Linlin} \sur{Su}}
\email{sull@sustech.edu.cn}
\equalcont{These authors contributed equally to this work.}
\author*[2]{\fnm{Yantao} \sur{Wang}}
\email{ytwang@nenu.edu.cn}
\equalcont{These authors contributed equally to this work.}
\author[1]{\fnm{Zhannan} \sur{Zhuang}}
\email{zhuangzn2022@mail.sustech.edu.cn}
\equalcont{These authors contributed equally to this work.}

\affil[1]{\orgdiv{Department of Mathematics}, \orgname{Southern University of Science and Technology}, \orgaddress{Shenzhen, China}}
\affil[2]{\orgdiv{School of Mathematics and Statistics}, \orgname{Northeast Normal University}, \orgaddress{Changchun, China}}

\abstract{
Interfaces influence transport across biological membranes, habitat boundaries, and composite media. Motivated by these applications, we study two classes of Schrödinger eigenvalue problems with transmission conditions allowing jumps in the solution or its normal derivative. We develop a weighted variational framework that accommodates asymmetric interface coupling and yields self-adjoint realizations in suitable weighted Hilbert spaces. For both problems and their adjoints, we establish discrete spectra, variational eigenvalue characterizations, and eigenfunction completeness in arbitrary spatial dimensions. We prove spectral continuity and identify limiting spectra in singular parameter regimes. We also extend Courant’s nodal domain theorem to eigenfunctions that may be discontinuous across the interface. The spectral results further extend to divergence-form elliptic operators under suitable assumptions on the coefficients. These results provide a foundation for analyzing elliptic and parabolic equations with interface transmission conditions.
}

\keywords{interface problems, transmission conditions, spectral theory, eigenvalue convergence, parameter dependence, variational methods}

\pacs[MSC Classification]{35P15, 35P20, 35J25, 49R05}

\maketitle

\section*{Acknowledgements}

The research of Linlin Su was supported by the National Natural Science Foundation of China (Grant Nos. 12071203 and 12371214). The research of Yantao Wang was supported by the National Natural Science Foundation of China (Grant No. 12371216).


\section{Introduction}\label{sec1}


 Transmission conditions determine how elliptic equations in adjoining subdomains combine into a global spectral problem. Asymmetric interface coupling can obscure the underlying variational structure, making the choice of function spaces essential to spectral analysis. We investigate this interplay for Schrödinger operators with two classes of transmission conditions, focusing on how the interface parameters shape the spectrum.


We first introduce the geometric setup. In this paper, a domain refers to a connected open set. Let $\Omega\subset\mathbb{R}^N$ be a bounded Lipschitz domain partitioned into two disjoint open subdomains $\Omega_1$ and $\Omega_2$ with  $\overline{\Omega}=\overline{\Omega_1}\cup\overline{\Omega_2}$. Denote the interface by $\Sigma=\partial{\Omega_1}\cap\partial{\Omega_2}\cap \Omega$ and the exterior boundary parts by $\Gamma_{i}=\partial\Omega_{i}\cap\partial\Omega$ for $i=1,2$. Two configurations are considered (see Fig.\,\ref{fig1}): (a) $\Gamma_{1}=\emptyset$; (b) both $\Gamma_{1}$ and $\Gamma_{2}$ are nonempty.
In both cases, we assume that  $\Omega_1$ and $\Omega_2$ are Lipschitz domains.
The unit normal $\nu(x)$ on $\partial\Omega$ and $\Sigma$ is taken to point outward with respect to $\Omega$ and $\Omega_1$, respectively.

\begin{center}
\begin{figure}[h!]
\centering
\includegraphics[scale=0.7]{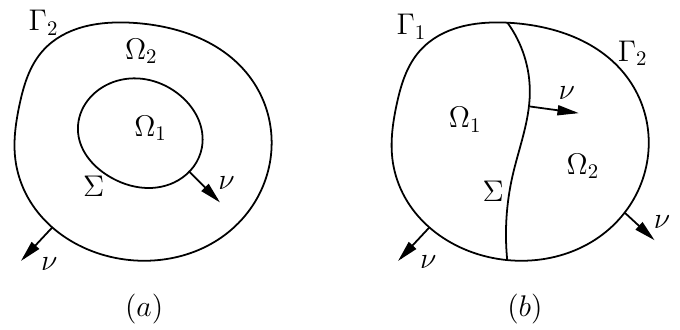}
\caption{\centering\small The interface $\Sigma$ partitions $\Omega$ into $\Omega_1$ and $\Omega_2$}\label{fig1}
\end{figure}
\end{center}

We  investigate the following two eigenvalue problems, where $c_i\in L^{\infty}(\Omega_i)$ ($i=1,2$) are given  real-valued functions. The first problem with interface parameters $\gamma_1,\gamma_2>0$ reads
\begin{align}\label{evp1}
\begin{cases}
-\Delta u_i+c_i(x)u_i=\lambda u_i, & x\in \Omega_i,\\
\partial_\nu u_i=\gamma_i(u_2-u_1), & x\in \Sigma,\\
\partial_\nu u_i=0, & x\in\Gamma_i, 
\end{cases}
\qquad i=1,2.
\end{align}
The second problem with interface parameter $\alpha>0$ reads
\begin{align}\label{evp2}
\begin{cases}
-\Delta \varphi_i+c_i(x)\varphi_i=\mu \varphi_i, & x\in \Omega_i,\\
\partial_\nu \varphi_1=\alpha\partial_\nu \varphi_2,\;\;\varphi_1=\varphi_2, & x\in \Sigma,\\
\partial_\nu \varphi_i=0, & x\in\Gamma_i,
\end{cases}
\qquad i=1,2.
\end{align}

The jump condition  \eqref{evp1}$_2$ models a permeable interface, where $\gamma_1$ and $\gamma_2$ denote the scaled transmissivities from $\Omega_2$ to $\Omega_1$ and from $\Omega_1$ to $\Omega_2$, respectively. It has origins in several fields. In population genetics,  it describes the effect of a geographical barrier on gene frequencies \cite{Nagylaki2016}. In thermodynamics, it reduces to a special instance of the Kedem-Katchalsky transmission condition in the symmetric case $\gamma_1=\gamma_2$ \cite{kedem1961physical}. The existence and uniqueness of a principal eigenvalue (for which the corresponding eigenfunctions do not change sign in $\Omega$) via the Krein-Rutman theorem, together with its characterization and related properties, have been investigated from various perspectives in \cite{Chen2006,WANG2022} for $\gamma_1=\gamma_2$ and in \cite{MAIA2024494,2023Some} for $\gamma_1\neq\gamma_2$.  Semilinear elliptic and parabolic problems with this type of interface condition were also investigated in these papers. For a parabolic system of $m$ species, each of which has a symmetric transmission parameter (i.e., $\gamma^{(j)}_{1}=\gamma^{(j)}_{2}$ for $j=1,\ldots, m$), Ciavolella and Perthame \cite{ciavolella2021existence} established the existence of global-in-time weak solutions.


The condition  \eqref{evp2}$_2$ originates from the theory  of skew Brownian motion  \cite{ito1965diffusion} and was introduced into ecological modeling by Cantrell and Cosner \cite{cantrell1998skew,cantrell1999diffusion} to describe biased movement at habitat edges.
In their formulation, the condition takes the form $(1-a) \partial_\nu \varphi_1=a\partial_\nu \varphi_2$ for $0<a<1$, which corresponds to setting $\alpha=a/(1-a)$ in \eqref{evp2}$_2$. This means that when individuals encounter the interface, they move from $\Omega_1$ to $\Omega_2$ with probability $a$,  and from $\Omega_2$ to $\Omega_1$ with probability $1-a$. In one-dimensional settings, Cantrell and Cosner \cite{cantrell1998skew,cantrell1999diffusion} studied the resulting eigenvalue problem and showed, among other results, how the principal eigenvalue  depends on key parameters such as $a$, patch sizes, and habitat qualities.



Our analysis of problems \eqref{evp1} and \eqref{evp2}  also yields corresponding results for their adjoint problems. 
The adjoint problem of \eqref{evp1} is
\begin{align}\label{evp1*}
\begin{cases}
-\Delta v_i+c_i(x)v_i=\lambda v_i, & x\in \Omega_i,\\
\partial_\nu v_1=\partial_\nu v_2=\gamma_2v_2-\gamma_1v_1, & x\in \Sigma,\\
\partial_\nu v_i=0, & x\in\Gamma_i,
\end{cases}
\qquad i=1,2.
\end{align}
The adjoint problem of \eqref{evp2} is
\begin{align}\label{evp2*}
\begin{cases}
-\Delta \psi_i+c_i(x)\psi_i=\mu \psi_i, & x\in \Omega_i,\\
\partial_\nu \psi_1=\partial_\nu \psi_2,\;\;\alpha\psi_1=\psi_2, & x\in \Sigma,\\
\partial_\nu \psi_i=0, & x\in\Gamma_i,
\end{cases}
\qquad i=1,2.
\end{align}
It is clear that $(\lambda; u_1,u_2)$ satisfies \eqref{evp1} if and only if  $(\lambda; v_1=u_1/\gamma_1, v_2=u_2/\gamma_2)$ satisfies \eqref{evp1*}. Similarly, $(\mu; \varphi_1, \varphi_2)$ satisfies \eqref{evp2} if and only if  $(\mu; \psi_1=\varphi_1, \psi_2=\alpha\varphi_2)$ satisfies \eqref{evp2*}.

Problems \eqref{evp1*} and \eqref{evp2*} are also of independent interest. The condition  \eqref{evp1*}$_2$ is known as a mass transfer condition,  which can be viewed either as a linear approximation of a general nonlinear function or as a Robin-type boundary condition (see, e.g., \cite{LISUWW2021}).  The condition  \eqref{evp2*}$_2$, known as a partition condition, naturally arises in the modeling of analyte transport in composite media, where the discontinuity in concentration across material interfaces is governed by the partition coefficient $\alpha$ (see, e.g., \cite{Carr-March2018,crank1975mathematics,Trefry-Whyte1999}). In one-dimensional settings, this condition was also introduced into biological modeling to describe biased movement at patch boundaries \cite{maciel2020evolutionarily,Maciel-Lutscher2013,Ovaskainen-Cornell2003,StClair-Nevai-Schugart2024}.
More recently, the work \cite{WANG2026} provides a detailed analysis of the principal eigenvalue of a reaction-diffusion model with interface conditions of type \eqref{evp2*}$_{2}$ in a one-dimensional three-patch landscape (extendable to arbitrarily many patches), including its asymptotic behavior with respect to diffusion rates, patch preferences, and patch qualities.

The studies of population models discussed above mainly concern principal eigenvalues, typically obtained via the Krein–Rutman theorem in a strong-solution setting. Here we work with weak solutions in a variational framework. Related full-spectrum results are available in other settings. Moutal and Grebenkov \cite{MoutalGrebenkov2019} studied one-dimensional diffusion with piecewise constant diffusivities, no potential terms, and symmetric semipermeable barriers, obtaining spectral and nodal results that include a special case of \eqref{evp1}. Behrndt, Langer, and Lotoreichik \cite{BehrndtLangerLotoreichik2013} and Behrndt, Exner, and Lotoreichik \cite{BehrndtExnerLotoreichik2014} studied hypersurface interactions on whole-space partitions containing unbounded subdomains. Their $\delta’$-interaction forms are related to the symmetric case ($\gamma_1=\gamma_2$) of \eqref{evp1}, although their geometries and spectral questions differ from ours.

Here we treat \eqref{evp1} and \eqref{evp2}, together with their adjoints, on bounded Lipschitz domains in arbitrary dimensions, allowing asymmetric interface parameters and spatially varying potentials. Weighted formulations place the basic spectral theory within the established framework of symmetric forms. We further establish continuity, permeability monotonicity at fixed parameter ratio, and singular limits for all indexed eigenvalues, retaining multiplicities. We also prove geometric nodal bounds accounting for interface jumps and extend the analysis to divergence-form elliptic operators under suitable coefficient assumptions.

%

\section{Main Results}\label{sec2}

Before stating the main results, we introduce the notation and function spaces used throughout this paper.  For convenience, we write $\gamma=(\gamma_1,\gamma_2)$. We use boldface symbols to denote two-component function pairs: for $\mathbf{u}=(u_1,u_2)$, the component $u_i$ is defined on $\Omega_i$, $i=1,2$. 
 All eigenvalues and eigenfunctions in this paper are understood in the weak sense (see Section \ref{sec3} for precise definitions).

%


%
%

To accommodate the
discontinuity allowed by the interface conditions \eqref{evp1}$_2$, we
introduce the product spaces
\begin{align*}
L^2:=L^2(\Omega_1)\times L^2(\Omega_2),\qquad
H^1:=H^1(\Omega_1)\times H^1(\Omega_2).
\end{align*}
Although $L^2(\Omega_1)\times L^2(\Omega_2)$ can be identified with the standard space $L^2(\Omega)$, we retain the notation $L^2$ to emphasize its two-component structure. We equip $L^2$ with the following inner product and use the corresponding natural norms on $L^2$ and $H^1$: 
\begin{equation}
(\mathbf{u},\mathbf{v})_{L^2}=\int_{\Omega_1}u_1v_1\,\mathrm{d}x+\int_{\Omega_2}u_2v_2 \,\mathrm{d}x,\label{L2}	
\end{equation}
\begin{equation}
	\|\mathbf{u}\|_{L^2}=\left(\|u_1\|_{L^2(\Omega_1)}^2+\|u_2\|_{L^2(\Omega_2)}^2\right)^{1/2},
\end{equation}
\begin{equation}
	 \|\mathbf{u}\|_{H^1}=\left(\|u_1\|_{H^1(\Omega_1)}^2+\|u_2\|_{H^1(\Omega_2)}^2\right)^{1/2}.
\label{L2H1}
\end{equation}

To formulate problem \eqref{evp1} in a symmetric variational framework, we also equip $L^2$ with the
$\gamma$-weighted inner product 
\begin{equation}\label{ip-1}
	(\mathbf{u},\mathbf{v})_{\gamma}
=\sum_{i=1}^{2}\frac{1}{\gamma_i}
\int_{\Omega_i}u_iv_i\,\mathrm{d}x,
\end{equation}
and define the symmetric bilinear form
$a_\gamma:H^1\times H^1\to\mathbb{R}$ by
\begin{equation}\label{bf-1}
	a_{\gamma}(\mathbf{u},\mathbf{v})
=\sum_{i=1}^{2}\frac{1}{\gamma_i}
\int_{\Omega_i}\left[
\nabla u_i\cdot\nabla v_i+c_i(x)u_iv_i
\right]\,\mathrm{d}x
+\int_{\Sigma}(u_2-u_1)(v_2-v_1)\,\mathrm{d}s.
\end{equation}
We say that $\mathbf{u},\mathbf{v}\in L^2$ are {\it $\gamma$-orthogonal}, denoted $\mathbf{u}\perp_{\gamma}\mathbf{v}$, if $(\mathbf{u},\mathbf{v})_{\gamma}=0$. For a linear subspace $V\subset L^2$, we write $\mathbf{u}\perp_{\gamma} V$ if $\mathbf{u}\perp_{\gamma}\mathbf{v}$ for all $\mathbf{v}\in V$. 

For problem \eqref{evp2}, the eigenfunctions are continuous across the
interface, whereas their normal derivatives are proportional. We therefore
introduce the closed subspace
\begin{equation}\label{H1Sigma}
H_\Sigma^1:=\{(u_1,u_2)\in H^1:\;u_1=u_2\;\mbox{on}\;\Sigma\}.
\end{equation}
Under the usual trace identification, $H_\Sigma^1$ can be identified with the
standard space $H^1(\Omega)$. 
To formulate problem \eqref{evp2} in a symmetric variational framework, we equip $L^2$ with the weighted inner product
\begin{equation}\label{ip-2}
	\langle\mathbf{u},\mathbf{v}\rangle_\alpha=\int_{\Omega_1}u_1v_1\,\mathrm{d}x+\alpha\int_{\Omega_2}u_2v_2\,\mathrm{d}x,
\end{equation}
and define the symmetric bilinear form
$b_\alpha:H^1_{\Sigma}\times H^1_{\Sigma}\to\mathbb{R}$ by 
\begin{equation}\label{bf-2}
	b_\alpha(\mathbf{u},\mathbf{v})=\int_{\Omega_1}\left[\nabla u_1\cdot\nabla v_1+c_1(x)u_1v_1\right]\,\mathrm{d}x+\alpha\int_{\Omega_2}\left[\nabla u_2\cdot\nabla v_2+c_2(x)u_2v_2\right]\,\mathrm{d}x.
\end{equation}
%
We say that $\mathbf{u},\mathbf{v}\in L^2$ are {\it $\alpha$-orthogonal},
denoted $\mathbf{u}\perp^\alpha\mathbf{v}$, if
$\langle\mathbf{u},\mathbf{v}\rangle_\alpha=0$. For a linear subspace
$V\subset L^2$, we write $\mathbf{u}\perp^\alpha V$ if
$\mathbf{u}\perp^\alpha\mathbf{v}$ for every $\mathbf{v}\in V$.
The Rayleigh quotients associated with the two problems \eqref{evp1} and \eqref{evp2} are, respectively,  denoted by
\begin{equation}\label{R(u)}
R_\gamma(\mathbf{u})=\frac{a_{\gamma}(\mathbf{u},\mathbf{u})}
{(\mathbf{u},\mathbf{u})_{\gamma}},
\end{equation}
and
\begin{equation}\label{hatR(u)}
\hat{R}_\alpha(\mathbf{u})=\frac{b_{\alpha}(\mathbf{u},\mathbf{u})}
{\langle \mathbf{u},\mathbf{u} \rangle_{\alpha}}.
\end{equation}

With these conventions, we state the main results. Theorems \ref{thm1} and
\ref{thm2} establish the fundamental spectral theory for problems
\eqref{evp1} and \eqref{evp2}, respectively. In the absence of an interface,
they reduce to the classical theory (see, e.g., \cite{Henrot2006}); the
interface conditions require the adapted variational structures introduced
above.  Throughout the following variational formulas, $V_j$ denotes a $j-$dimensional linear
subspace of the indicated ambient space.



\begin{thm}\label{thm1} 
For problem \eqref{evp1}, the following statements hold.
\begin{enumerate}[label=(\roman*),ref=\roman*]
\item\label{thm1:spectrum} All the eigenvalues are real and, counted with multiplicity,
can be enumerated as
\begin{align*}
\lambda_1<\lambda_2\leq\lambda_3\leq\cdots\leq \lambda_k\leq \cdots,\qquad \lim_{k\rightarrow\infty}\lambda_k=\infty.
\end{align*}
\item\label{thm1:regularity} Every real-valued eigenfunction $\mathbf{u}=(u_1,u_2)$ satisfies $u_i\in C(\overline{\Omega_i})$ for $i=1,2$.
\item\label{thm1:principal} The eigenspace corresponding to $\lambda_1$ is one-dimensional, and every real-valued eigenfunction $\mathbf{u}$ associated with $\lambda_1$ 
satisfies either 
\begin{equation*}
	u_i>0\quad \text{on}\quad \overline{\Omega_i},\quad i=1,2,
\end{equation*}
or
\begin{equation*}
	u_i<0\quad \text{on}\quad \overline{\Omega_i},\quad i=1,2.
\end{equation*}
 \item\label{thm1:eigenbasis} There exists a sequence of eigenfunctions
$\{\mathbf{e}_k\}_{k=1}^{\infty}$, with $\mathbf{e}_k$ corresponding to
$\lambda_k$ for every $k\geq1$, such that
\[
(\mathbf{e}_j,\mathbf{e}_\ell)_\gamma=\delta_{j\ell}
\qquad\text{for all }j,\ell\geq1,
\]
\[
\lambda_k=R_{\gamma}(\mathbf{e}_k)
=\min\{R_{\gamma}(\mathbf{u}):\;\mathbf{u}\in H^1,\;\mathbf{u}\neq0,\;\mathbf{u}\perp_\gamma E_{k-1}\}
\qquad\text{for every }k\geq1,
\]
where $E_{k-1}:=\operatorname{span}\{\mathbf{e}_1,\ldots,\mathbf{e}_{k-1}\}$ for $k\geq2$, and $E_0:=\{0\}$.
Moreover, this sequence forms  a basis for both $L^2$ and $H^1$.
\item\label{thm1:minmax} For every $k\geq1$, the eigenvalue $\lambda_k$ also   admits the following characterizations.
\begin{enumerate}[label=(\alph*),ref=\alph*]
\item\label{thm1:courant} Courant type:
\begin{align*}
\lambda_k=\max_{V_{k-1}\subset L^2 } \min\{R_\gamma(\mathbf{u}):\;\mathbf{u}\in H^1,\;\mathbf{u}\neq 0,\;\mathbf{u}\perp_{\gamma} V_{k-1}\}. 
\end{align*}
\item\label{thm1:poincare} Poincar\'{e} type:
\begin{align*}
\lambda_k=\min_{V_k \subset H^1  } \max\{R_\gamma(\mathbf{u}):\;\mathbf{u}\in V_k ,\;\mathbf{u}\neq 0\}.
\end{align*}
\end{enumerate}

\end{enumerate}
\end{thm}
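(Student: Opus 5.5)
The plan is to realize \eqref{evp1} as the eigenvalue problem of a self-adjoint operator with compact resolvent on the Hilbert space $(L^2,(\cdot,\cdot)_\gamma)$, generated by the closed symmetric form $a_\gamma$ on $H^1$. First I would verify that the weak formulation of \eqref{evp1} is $a_\gamma(\mathbf{u},\mathbf{v})=\lambda(\mathbf{u},\mathbf{v})_\gamma$ for all $\mathbf{v}\in H^1$. To see this, multiply the $i$-th equation by $v_i/\gamma_i$ and integrate by parts. Since $\nu$ points out of $\Omega_1$, the interface terms are $-\int_\Sigma \partial_\nu u_1 v_1/\gamma_1+\int_\Sigma\partial_\nu u_2 v_2/\gamma_2=\int_\Sigma (u_2-u_1)(v_2-v_1)$. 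This is exactly where the weights $1/\gamma_i$ symmetrize the asymmetric coupling.

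Next I would show that $a_\gamma$ is bounded and coercive up to a shift. Boundedness follows from $c_i\in L^\infty$ and the trace inequality on the Lipschitz domains $\Omega_i$. Coercivity holds because the interface term is nonnegative, so $a_\gamma(\mathbf{u},\mathbf{u})+M(\mathbf{u},\mathbf{u})_\gamma\ge \min_i\gamma_i^{-1}\|\mathbf{u}\|_{H^1}^2$ for $M\ge \|c\|_\infty/\min\gamma_i+\max_i\gamma_i^{-1}$. The embedding $H^1(\Omega_i)\hookrightarrow L^2(\Omega_i)$ is compact (Rellich, Lipschitz), so $H^1\hookrightarrow L^2$ is compact and dense. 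By Lax–Milgram, the solution operator $T:L^2\to H^1\subset L^2$ of $a_\gamma+M(\cdot,\cdot)_\gamma$ is compact, self-adjoint and positive on $(L^2,(\cdot,\cdot)_\gamma)$. The spectral theorem for compact self-adjoint operators then gives real eigenvalues $\lambda_k\to\infty$ of finite multiplicity, and a $\gamma$-orthonormal eigenbasis $\{\mathbf{e}_k\}$ of $L^2$. The same basis is orthogonal in $H^1$ for the energy inner product $a_\gamma+M(\cdot,\cdot)_\gamma$, which is equivalent to the $H^1$ norm, and it is complete there because any $\mathbf{u}$ energy-orthogonal to all $\mathbf{e}_k$ is $\gamma$-orthogonal to them. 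Expanding $\mathbf{u}=\sum c_k\mathbf{e}_k$ gives $a_\gamma(\mathbf{u},\mathbf{u})=\sum\lambda_k c_k^2$. From this the recursive minimum characterization in (\ref{thm1:eigenbasis}) follows, and the Courant max–min and Poincaré min–max in (\ref{thm1:minmax}) follow by the standard dimension-counting argument. For max–min, choose $\mathbf{u}\in \mathrm{span}\{\mathbf{e}_1,\dots,\mathbf{e}_k\}$ $\gamma$-orthogonal to $V_{k-1}$. For min–max, choose $\mathbf{u}\in V_k$ $\gamma$-orthogonal to $E_{k-1}$.

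For regularity (\ref{thm1:regularity}), each $u_i$ weakly solves $-\Delta u_i+c_iu_i=\lambda u_i$ in $\Omega_i$ with Neumann data $g_i=\gamma_i(u_2-u_1)$ on $\Sigma$ and $0$ on $\Gamma_i$. I would bootstrap using De Giorgi–Nash–Moser type estimates for Neumann/Robin problems on Lipschitz domains. Traces of $H^1$ lie in $L^{2(N-1)/(N-2)}(\partial\Omega_i)$, and the Neumann data gain integrability with each step, so after finitely many iterations $u_i\in L^\infty$ and the data are in $L^\infty(\partial\Omega_i)$. Global Hölder continuity up to the Lipschitz boundary then follows (e.g.\ Nittka's results for Robin/Neumann problems). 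I expect this step to be the main technical obstacle, because of the coupling through traces and the low boundary regularity.

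For (\ref{thm1:principal}), observe that $a_\gamma(|\mathbf{u}|,|\mathbf{u}|)\le a_\gamma(\mathbf{u},\mathbf{u})$, since $||u_2|-|u_1||\le|u_2-u_1|$, while the $\gamma$-norms agree. Hence $|\mathbf{u}|$ is also a minimizer and therefore a $\lambda_1$-eigenfunction, so $\mathbf{u}^\pm$ are eigenfunctions as well. The next task is to show that a nonnegative nontrivial eigenfunction is positive on each $\overline{\Omega_i}$. Adding a constant makes $c_i-\lambda_1+K\ge0$. The weak Harnack inequality (interior and up to the Neumann boundary) then gives either $u_i\equiv0$ or $u_i>0$ on $\overline{\Omega_i}$, using the fact that the interface data $\gamma_i(u_{j}-u_i)\ge-\gamma_iu_i$ act as a Robin term of the right sign. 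If $u_1\equiv0$, then $\partial_\nu u_1=\gamma_1u_2=0$ on $\Sigma$, which forces $u_2=0$ on $\Sigma$; but $u_2>0$ up to $\Sigma$ unless $u_2\equiv0$. The same argument applies with the roles reversed. So positivity holds in both components, and by continuity the sign of $\mathbf{u}$ is determined on the closed sets. Simplicity follows because two $\gamma$-orthogonal eigenfunctions cannot both be of one sign, which gives $\lambda_1<\lambda_2$.
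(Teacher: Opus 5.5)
Your proposal is correct. The regularity step is essentially the paper's trace-integrability bootstrap through Nittka's Robin estimates (Proposition \ref{prop:regularityforrobin}), but the other parts follow a genuinely different route.

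For (\ref{thm1:spectrum}), (\ref{thm1:eigenbasis}) and (\ref{thm1:minmax}) you use the compact, self-adjoint Lax--Milgram solution operator on $(L^2,(\cdot,\cdot)_\gamma)$. The spectral theorem then gives discreteness, finite multiplicity, completeness and exhaustion of the weak spectrum at once, and you read the recursive minimum characterization off the expansion $a_\gamma(\mathbf u,\mathbf u)=\sum_k\lambda_kc_k^2$. The paper avoids operator theory. It constructs $\mathbf e_k$ by iterated constrained minimization (direct method plus Euler--Lagrange equation), so (\ref{thm1:eigenbasis}) is the construction itself. It then proves divergence, completeness in $H^1$ and $L^2$, and exhaustion by hand (Steps 3--6 of Lemma \ref{lem2}). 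Your route is shorter; the paper's is self-contained.

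For (\ref{thm1:principal}) you invoke a weak Harnack inequality up to the boundary for Robin supersolutions on the Lipschitz domains $\Omega_i$. This gives positivity on $\overline{\Omega_i}$ immediately. Combined with the splitting into $\mathbf u^{\pm}$, it handles both a vanishing component and mismatched signs: if both $\mathbf u^{+}$ and $\mathbf u^{-}$ were nonzero, both would be strictly positive in each component despite having disjoint supports.

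The paper uses only the interior Harnack inequality at first. It rules out a vanishing component by testing with $(v_1,0)$ and gluing to a Neumann eigenfunction on all of $\Omega$. It matches the signs through $\int_\Sigma u_1u_2\,\mathrm ds>0$. It reaches the closures only at the end, by comparing $u_i$ with the solution $w_i$ of an auxiliary coercive Robin problem, which is strictly positive on $\overline{\Omega_i}$ by \cite{Arendt2020}. The boundary weak Harnack inequality is the real technical input of your argument and needs a precise reference. The paper confines the corresponding boundary difficulty to one cited positivity result plus an elementary comparison.

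One small slip: your shift $M\ge\|c\|_\infty/\min_i\gamma_i+\max_i\gamma_i^{-1}$ does not give coercivity in general. Take $\gamma_1=\gamma_2=100$, $c\equiv-1$, $M=0.02$ and $\mathbf u\equiv(1,1)$; then the left-hand side of your inequality is negative. The potential term carries the same weights $\gamma_i^{-1}$ as $(\cdot,\cdot)_\gamma$, so the choice $M=\|c\|_\infty+1$ works.
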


\begin{thm}\label{thm2} For problem \eqref{evp2}, the following statements hold.
\begin{enumerate}[label=(\roman*),ref=\roman*]
\item\label{thm2:spectrum} All the eigenvalues are real and, counted with multiplicity,
can be enumerated as
\begin{align*}
\mu_1<\mu_2\leq\mu_3\leq\cdots\leq \mu_k\leq \cdots,\qquad \lim_{k\rightarrow\infty}\mu_k=\infty.
\end{align*}
\item\label{thm2:regularity} Every real-valued eigenfunction $ \hat{\mathbf{u}}$ belongs to  $C(\overline{\Omega})$. 
\item\label{thm2:principal} The eigenspace corresponding to $\mu_1$ is one-dimensional, 
and every real-valued eigenfunction $\mathbf{u}$ associated with $\mu_1$  
satisfies either 
\begin{equation*}
	\hat{\mathbf{u}}>0\quad \text{in}\quad \overline{\Omega}\quad \text{or}  \quad \hat{\mathbf{u}}<0\quad \text{in}\quad \overline{\Omega}.
\end{equation*}
\item\label{thm2:eigenbasis} There exists a sequence of eigenfunctions
$\{\hat{\mathbf{e}}_k\}_{k=1}^{\infty}$, with $\hat{\mathbf{e}}_k$
corresponding to $\mu_k$ for every $k\geq1$, such that
\[
\langle\hat{\mathbf{e}}_j,\hat{\mathbf{e}}_\ell\rangle_\alpha
=\delta_{j\ell}
\qquad\text{for all }j,\ell\geq1,
\]
\[
\mu_k=\hat{R}_\alpha(\hat{\mathbf{e}}_k)
=\min\left\{\hat{R}_\alpha(\mathbf{u}):\mathbf{u}\in H_\Sigma^1,\ \mathbf{u}\neq0,\ \mathbf{u}\perp^\alpha\hat{E}_{k-1}\right\}
\qquad\text{for every }k\geq1,
\]
where $\hat{E}_{k-1}:=\operatorname{span}\{\hat{\mathbf{e}}_1,\ldots,\hat{\mathbf{e}}_{k-1}\}$ for $k\geq2$, and $\hat{E}_0:=\{0\}$.
Moreover, this sequence forms a basis for both $L^2$ and $H_\Sigma^1$.
\item\label{thm2:minmax} For every $k\geq1$, the eigenvalue $\mu_k$ also admits the following
characterizations.
\begin{enumerate}[label=(\alph*),ref=\alph*]
\item\label{thm2:courant} Courant type:
\begin{align*}
\mu_k=\max_{V_{k-1}\subset L^2} \min\{\hat R_\alpha(\mathbf{u}):\;\mathbf{u}\in H_\Sigma^1,\;\mathbf{u}\neq 0,\;\mathbf{u}\perp^{\alpha} V_{k-1}\}.
\end{align*}
\item\label{thm2:poincare} Poincar\'{e} type:
\begin{align*}
\mu_k=\min_{V_k\subset H_\Sigma^1} \max\{\hat R_\alpha(\mathbf{u}):\;\mathbf{u}\in V_k,\;\mathbf{u}\neq 0\}.
\end{align*}
\end{enumerate}
%
%
\end{enumerate}
\end{thm}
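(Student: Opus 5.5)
The plan is to realize problem \eqref{evp2} as the spectral problem of the symmetric form $b_\alpha$ on the Hilbert space $L^2$ with inner product $\langle\cdot,\cdot\rangle_\alpha$, and then apply the abstract spectral theorem for compact self-adjoint operators. First check that the weak formulation is correct. For $\mathbf{u},\mathbf{v}\in H^1_\Sigma$, multiply the equation in $\Omega_1$ by $v_1$, and the equation in $\Omega_2$ by $\alpha v_2$, then integrate by parts. With $\nu$ pointing out of $\Omega_1$ on $\Sigma$, the interface terms are
\[
\int_\Sigma \partial_\nu\varphi_1 v_1\,\mathrm{d}s-\alpha\int_\Sigma\partial_\nu\varphi_2 v_2\,\mathrm{d}s ,
\]
and these cancel because $v_1=v_2$ and $\partial_\nu\varphi_1=\alpha\partial_\nu\varphi_2$ on $\Sigma$. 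So a weak eigenpair means
\[
b_\alpha(\boldsymbol{\varphi},\mathbf{v})=\mu\langle\boldsymbol{\varphi},\mathbf{v}\rangle_\alpha\quad\text{for all }\mathbf{v}\in H^1_\Sigma ,
\]
which is the definition in Section \ref{sec3}. Since $\alpha>0$, $\langle\cdot,\cdot\rangle_\alpha$ is equivalent to the standard $L^2$ inner product. Also, $b_\alpha(\mathbf{u},\mathbf{u})+M\langle\mathbf{u},\mathbf{u}\rangle_\alpha\ge\min(1,\alpha)\|\mathbf{u}\|_{H^1}^2$ for $M=\max\|c_i\|_\infty+1$, so $b_\alpha$ is bounded and coercive after a shift. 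The space $H^1_\Sigma$ is closed in $H^1$ by continuity of traces, and it is dense in $L^2$ because it contains $C_c^\infty(\Omega_1)\times C_c^\infty(\Omega_2)$. The embedding $H^1_\Sigma\hookrightarrow L^2$ is compact by Rellich on each Lipschitz $\Omega_i$.

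With this in place, Lax--Milgram gives a solution operator $T:L^2\to H^1_\Sigma\subset L^2$ with
\[
b_\alpha(T\mathbf{f},\mathbf{v})+M\langle T\mathbf{f},\mathbf{v}\rangle_\alpha=\langle\mathbf{f},\mathbf{v}\rangle_\alpha .
\]
$T$ is compact and positive, and symmetric with respect to $\langle\cdot,\cdot\rangle_\alpha$ (this is where the weight $\alpha$ is essential). The Hilbert--Schmidt theorem then yields:
\begin{itemize}
\item a $\langle\cdot,\cdot\rangle_\alpha$-orthonormal basis $\{\hat{\mathbf{e}}_k\}$ of $L^2$;
\item real eigenvalues $\mu_k=1/\tau_k-M\to\infty$ with finite multiplicities;
\item completeness in $H^1_\Sigma$, because the $\hat{\mathbf{e}}_k$ are also orthogonal for the equivalent inner product $b_\alpha+M\langle\cdot,\cdot\rangle_\alpha$.
\end{itemize}
Reality of all eigenvalues, including complex eigenfunctions, follows by taking real and imaginary parts. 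The variational characterization in (iv) follows by expanding $\mathbf{u}=\sum c_k\hat{\mathbf{e}}_k$, which gives $\hat R_\alpha(\mathbf{u})=\sum\mu_k c_k^2/\sum c_k^2$. The Courant and Poincaré formulas in (v) follow by the standard dimension-counting argument: choose a suitable $\mathbf{u}$ in $\hat E_k\cap V_{k-1}^{\perp^\alpha}$, or in $V_k\cap\hat E_{k-1}^{\perp^\alpha}$, respectively.

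For (ii), I identify $H^1_\Sigma$ with $H^1(\Omega)$. Then $\boldsymbol{\varphi}$ is a weak solution of the divergence-form equation
\[
-\nabla\cdot(A\nabla\varphi)+\rho c\varphi=\mu\rho\varphi \ \text{ in }\Omega,\qquad A=\rho=1\text{ on }\Omega_1,\quad A=\rho=\alpha\text{ on }\Omega_2 ,
\]
with the conormal condition on $\partial\Omega$. This is a uniformly elliptic equation with bounded measurable coefficients, so De Giorgi--Nash--Moser gives Hölder continuity in $\Omega$. Continuity up to the Lipschitz boundary under the Neumann condition I plan to take from the known boundary De Giorgi estimates for Neumann problems on Lipschitz domains, or to reduce to them by a local reflection/flattening argument.

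For (iii), replacing $\boldsymbol{\varphi}$ by $|\boldsymbol{\varphi}|$ keeps it in $H^1_\Sigma$ and preserves $\hat R_\alpha$. Hence $|\hat{\mathbf{e}}_1|$ is also a minimizer, and therefore an eigenfunction. The Harnack inequality for the divergence-form equation above, including its boundary version for Neumann conditions, then gives $|\hat{\mathbf{e}}_1|>0$ on $\overline\Omega$, and by continuity $\hat{\mathbf{e}}_1$ has a fixed sign. Simplicity follows because two $\alpha$-orthogonal positive eigenfunctions cannot exist. This gives the strict inequality $\mu_1<\mu_2$.

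I expect the main obstacle to be the boundary part of (ii) and (iii): continuity and strict positivity up to $\partial\Omega$, especially near points where $\Sigma$ meets $\partial\Omega$ in configuration (b), where the coefficients are discontinuous. I would handle this by the boundary Harnack and De Giorgi theory for conormal problems with measurable coefficients on Lipschitz domains. This theory does not require continuity of the coefficients, so the jump across $\Sigma$ should cause no difficulty.
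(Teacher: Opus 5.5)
Your proposal is correct. The regularity and positivity parts use the same ingredients as the paper, but the spectral core follows a different route.

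The paper never introduces a solution operator. Mirroring its proof of Theorem~\ref{thm1}, it builds $\hat{\mathbf e}_1,\hat{\mathbf e}_2,\dots$ one at a time by minimizing $\hat R_\alpha$ over the closed subspaces $\{\mathbf u\in H^1_\Sigma:\mathbf u\perp^\alpha\hat E_{k-1}\}$, using the direct method plus Rellich. It then shows that each constrained minimizer satisfies the weak equation against all of $H^1_\Sigma$, and proves $\mu_k\to\infty$ by a compactness contradiction. Completeness comes first in $H^1_\Sigma$ (via the shifted inner product and $\mu_k\to\infty$), then in $L^2$ by density, and exhaustion is shown last by orthogonality.

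You instead obtain the orthonormal basis, reality, finite multiplicity, divergence and exhaustion in one stroke, from the Hilbert--Schmidt theorem for the compact, positive, $\langle\cdot,\cdot\rangle_\alpha$-symmetric solution operator $T$. You then read off (iv) from the expansion $\hat R_\alpha(\mathbf u)=\sum\mu_kc_k^2/\sum c_k^2$. Your completeness argument runs in the opposite order ($L^2$ first, then $H^1_\Sigma$ by $(\cdot,\cdot)_*$-orthogonality), which is shorter. The paper's route is more elementary and gives the minimum characterization in (iv) directly by construction; yours is more economical.

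In (v) you should still note that the inner infimum over $\{\mathbf u\in H^1_\Sigma:\mathbf u\perp^\alpha V_{k-1}\}$ is attained (direct method on a closed subspace of $H^1_\Sigma$), but this is routine.

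For (ii) and (iii) you use what the paper uses: the identification $H^1_\Sigma=H^1(\Omega)$, the divergence-form equation with piecewise constant coefficient, the $|\mathbf u|$ trick, and the Harnack inequality. For H\"older continuity up to the Lipschitz boundary the paper invokes \cite[Theorem 3.14]{Nittka2011Regularity}, which is exactly the boundary De Giorgi estimate you have in mind.

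Your boundary Harnack step is worth keeping. The paper's Lemma~\ref{lem5}(iii) only establishes a definite sign in the open set $\Omega$, while the statement asserts a strict sign on $\overline\Omega$. A boundary estimate for the conormal problem closes that gap. One way is bi-Lipschitz flattening followed by even reflection, which keeps the coefficients bounded, measurable and uniformly elliptic. Another is \cite[Corollary 4.6]{Arendt2020}, as the paper does in Lemma~\ref{lem1}.
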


Throughout the rest of this paper, when we wish to emphasize the dependence on the interface parameters, we shall write the eigenvalues in Theorems \ref{thm1} and \ref{thm2} as $\lambda_k(\gamma)$ and $\mu_k(\alpha)$, respectively. The following result examines this dependence, establishing continuity in general and, in a special case, monotonicity as well.

\begin{thm}\label{thm3} 
 For every $k\ge 1$, the following statements hold.
\begin{enumerate}[(i)]
  \item $\lambda_k(\gamma)$ depends continuously on $\gamma$.
  \item $\mu_k(\alpha)$ depends continuously on $\alpha$.
  \item If $\gamma=(\alpha\gamma_2,\gamma_2)$ for a fixed constant $\alpha>0$, then $\lambda_k(\gamma)$ is nondecreasing in $\gamma_2>0$.
\end{enumerate}
\end{thm}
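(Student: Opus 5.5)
The plan is to use the Poincar\'e-type min--max characterizations in Theorem \ref{thm1}(\ref{thm1:poincare}) and Theorem \ref{thm2}(\ref{thm2:poincare}). The key observation is that the ambient spaces $H^1$ and $H^1_\Sigma$ do not depend on the parameters. Only the forms $a_\gamma$, $b_\alpha$ and the weighted inner products change with the parameters, and they do so through explicit positive factors.

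For (i), fix $\gamma$ and let $\gamma'$ satisfy $|\gamma'-\gamma|$ small. Put
\[
\theta=\max_i\max\{\gamma_i/\gamma_i',\,\gamma_i'/\gamma_i\}\ge1 .
\]
Write $a_\gamma(\mathbf u,\mathbf u)=Q_\gamma(\mathbf u)+J(\mathbf u)$, where $Q_\gamma$ is the bulk part and $J(\mathbf u)=\int_\Sigma(u_2-u_1)^2\,\mathrm ds\ge0$ is the interface part. The potential terms may be negative. To handle this, shift by a constant $M>\max_i\|c_i\|_{L^\infty}$, replacing $c_i$ by $c_i+M$. This shifts every $\lambda_k(\gamma)$ by exactly $M$, since $(\mathbf u,\mathbf u)_\gamma$ carries the same weights $1/\gamma_i$. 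After the shift, $Q_\gamma\ge0$, and term by term
\[
\theta^{-1}Q_{\gamma}\le Q_{\gamma'}\le\theta Q_\gamma ,\qquad
\theta^{-1}(\mathbf u,\mathbf u)_\gamma\le(\mathbf u,\mathbf u)_{\gamma'}\le\theta(\mathbf u,\mathbf u)_\gamma .
\]
Hence $R_{\gamma'}(\mathbf u)\le\theta^2R_\gamma(\mathbf u)$, because $J$ has no weight and $J\le\theta J$. The reverse inequality holds by symmetry, and all quotients are nonnegative. Taking min--max over $V_k\subset H^1$ gives
\[
\theta^{-2}\tilde\lambda_k(\gamma)\le\tilde\lambda_k(\gamma')\le\theta^2\tilde\lambda_k(\gamma)
\]
for the shifted eigenvalues $\tilde\lambda_k$. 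Since $\theta\to1$ as $\gamma'\to\gamma$, continuity follows.

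For (ii), the same argument applies to $\hat R_\alpha$ after shifting by $M$. With $\theta=\max\{\alpha/\alpha',\alpha'/\alpha\}$, both $b_\alpha$ and $\langle\cdot,\cdot\rangle_\alpha$ change by factors between $\theta^{-1}$ and $\theta$. This yields $\theta^{-2}\tilde\mu_k(\alpha)\le\tilde\mu_k(\alpha')\le\theta^2\tilde\mu_k(\alpha)$.

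For (iii), with $\gamma=(\alpha\gamma_2,\gamma_2)$, multiply both numerator and denominator of $R_\gamma$ by $\gamma_2$. This gives
\[
R_\gamma(\mathbf u)=\frac{\sum_i\beta_i\int_{\Omega_i}(|\nabla u_i|^2+c_iu_i^2)+\gamma_2\int_\Sigma(u_2-u_1)^2}{\sum_i\beta_i\int_{\Omega_i}u_i^2},
\]
where $\beta_1=1/\alpha$ and $\beta_2=1$ are independent of $\gamma_2$. This quotient is pointwise nondecreasing in $\gamma_2$. The Poincar\'e min--max over the fixed space $H^1$ then gives monotonicity of $\lambda_k(\gamma)$ with no shift needed.

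The main technical point is the sign issue in the comparison for (i) and (ii). That is exactly why the shift by $M$ is needed there and why one must check that it commutes with the weighting.
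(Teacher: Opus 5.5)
Your proposal is correct and is essentially the paper's proof. The paper also shifts the potentials so that $c_i\ge0$ (the shift commutes with the weights, as you note), compares $a_{\gamma^{(n)}}$ and $(\cdot,\cdot)_{\gamma^{(n)}}$ with $a_\gamma$ and $(\cdot,\cdot)_\gamma$ via factors $1\pm\varepsilon_n$, $\varepsilon_n=\max_i|\gamma_i/\gamma_i^{(n)}-1|$, where you use $\theta^{\pm1}$, and applies the Poincar\'e-type min--max on the parameter-independent space $H^1$ (resp.\ $H^1_\Sigma$); part (iii) likewise follows there from the pointwise monotonicity of the Rayleigh quotient in $\gamma_2$ when $\gamma_1=\alpha\gamma_2$.
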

%

The following result presents the asymptotic properties of the eigenvalues $\lambda_k(\gamma)$ of problem \eqref{evp1} as $\gamma\to (\infty,\infty)$ and simultaneously reveals their connection to the eigenvalues $\mu_k(\alpha)$ of problem \eqref{evp2}.
\begin{thm}\label{thm4}
For every $k\ge 1$, $\lambda_k(\gamma)\leq\mu_k(\gamma_1/\gamma_2)$. Moreover, as $\gamma\to  (\infty,\infty)$  with $\gamma_1/\gamma_2\to\alpha\in(0,\infty)$,
\begin{equation}\label{gamma-infty}
\lambda_k(\gamma)\to  \mu_k(\alpha)\,.
\end{equation}
\end{thm}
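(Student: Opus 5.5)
The plan has two parts: the inequality follows from the Poincar\'e-type characterizations, and the limit from compactness.

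\textbf{Inequality.} Fix $\gamma$ and set $\alpha=\gamma_1/\gamma_2$. For $\mathbf{u}\in H^1_\Sigma$ the interface term in $a_\gamma$ vanishes, and $1/\gamma_1=1/(\alpha\gamma_2)$, so
\[
a_\gamma(\mathbf{u},\mathbf{u})=\frac{1}{\gamma_1}\,b_\alpha(\mathbf{u},\mathbf{u}),\qquad
(\mathbf{u},\mathbf{u})_\gamma=\frac{1}{\gamma_1}\,\langle\mathbf{u},\mathbf{u}\rangle_\alpha .
\]
Hence $R_\gamma(\mathbf{u})=\hat R_\alpha(\mathbf{u})$ on $H^1_\Sigma$. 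Since $H^1_\Sigma\subset H^1$, every $V_k\subset H^1_\Sigma$ is admissible in Theorem~\ref{thm1}(\ref{thm1:poincare}). Comparing with Theorem~\ref{thm2}(\ref{thm2:poincare}) then gives $\lambda_k(\gamma)\le\mu_k(\gamma_1/\gamma_2)$.

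\textbf{Limit.} Take a sequence $\gamma^n\to(\infty,\infty)$ with $\alpha_n:=\gamma^n_1/\gamma^n_2\to\alpha$. By the inequality and the continuity of $\mu_k$ in Theorem~\ref{thm3}(ii), $\limsup_n\lambda_k(\gamma^n)\le\mu_k(\alpha)$. For the matching liminf, I would multiply the forms by $\gamma^n_1$. The rescaled quadratic form is
\[
\int_{\Omega_1}\bigl(|\nabla u_1|^2+c_1u_1^2\bigr)+\alpha_n\int_{\Omega_2}\bigl(|\nabla u_2|^2+c_2u_2^2\bigr)+\gamma^n_1\int_\Sigma (u_2-u_1)^2 ,
\]
with denominator $\langle\mathbf{u},\mathbf{u}\rangle_{\alpha_n}$, and it has the same Rayleigh quotient. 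Take the first $k$ eigenfunctions $\mathbf{e}^n_1,\dots,\mathbf{e}^n_k$ from Theorem~\ref{thm1}(\ref{thm1:eigenbasis}), renormalized to be orthonormal for $\langle\cdot,\cdot\rangle_{\alpha_n}$; this is possible because $(\cdot,\cdot)_{\gamma^n}=\langle\cdot,\cdot\rangle_{\alpha_n}/\gamma^n_1$.

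\textbf{Compactness.} These $\lambda_j(\gamma^n)$ are bounded above by $\mu_k(\alpha_n)$, hence uniformly bounded. Using $\alpha_n\to\alpha>0$ and $c_i\in L^\infty$, the $H^1$ norms of $\mathbf{e}^n_j$ are uniformly bounded, and $\gamma^n_1\int_\Sigma(e^n_{j,2}-e^n_{j,1})^2$ is bounded. Passing to a subsequence, $\lambda_j(\gamma^n)\to\lambda_j^*$, $\mathbf{e}^n_j\rightharpoonup\mathbf{e}_j$ weakly in $H^1$, and strongly in $L^2$ by Rellich on each Lipschitz $\Omega_i$. The traces converge strongly in $L^2(\Sigma)$ by compactness of the trace operator $H^1(\Omega_i)\to L^2(\Sigma)$. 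Since $\gamma^n_1\to\infty$, the bound on the interface term forces $e_{j,1}=e_{j,2}$ on $\Sigma$, so $\mathbf{e}_j\in H^1_\Sigma$. Strong $L^2$ convergence with $\alpha_n\to\alpha$ gives $\langle\mathbf{e}_i,\mathbf{e}_j\rangle_\alpha=\delta_{ij}$.

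\textbf{Identifying the limit.} Let $V=\operatorname{span}\{\mathbf{e}_1,\dots,\mathbf{e}_k\}$, a $k$-dimensional subspace of $H^1_\Sigma$. For $\mathbf{v}=\sum_j t_j\mathbf{e}_j$, set $\mathbf{v}^n=\sum_j t_j\mathbf{e}^n_j$. The rescaled form is diagonal on this span with entries $\lambda_j(\gamma^n)$. Drop the nonnegative interface term, then use weak lower semicontinuity of the gradient terms (with $\alpha_n\to\alpha$) and strong convergence of the $c_i$ terms:
\[
b_\alpha(\mathbf{v},\mathbf{v})\le\liminf_n\sum_j\lambda_j(\gamma^n)t_j^2\le \lambda_k^*\,\langle\mathbf{v},\mathbf{v}\rangle_\alpha .
\]
Theorem~\ref{thm2}(\ref{thm2:poincare}) then gives $\mu_k(\alpha)\le\lambda_k^*$. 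Since every subsequence has a further subsequence with this limit, $\lambda_k(\gamma^n)\to\mu_k(\alpha)$.

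The main obstacle is this liminf step. The delicate points are the uniform $H^1$ bound, which needs the lower bound on the denominator weights and must absorb negative $c_i$, and making sure the interface penalty forces continuity of the traces in the limit. Both are handled by compactness of the trace operator together with the bounded penalty.
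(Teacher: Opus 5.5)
Your proof is correct. Up to the compactness step it coincides with the paper:
\begin{itemize}
\item the inequality, via $R_\gamma=\hat R_{\gamma_1/\gamma_2}$ on $H^1_\Sigma$ and the Poincar\'e characterizations;
\item the limsup bound, via continuity of $\mu_k$;
\item the rescaling by $\gamma_1$ to $\langle\cdot,\cdot\rangle_{\alpha_n}$-orthonormal eigenfunctions;
\item the uniform $H^1$ bounds;
\item the continuity of the limits across $\Sigma$.
\end{itemize}

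The routes differ only in how you get $\mu_k(\alpha)\le\lambda_k^*$. The paper tests the rescaled eigen-equation with $\mathbf v\in H^1_\Sigma$, where the interface term vanishes identically. Passing to the limit, it shows that each limit $\mathbf w_j$ is an eigenfunction of \eqref{evp2} with eigenvalue $\lambda_j^*$. It then concludes by counting: the $k$ $\alpha$-orthonormal limits lie in the spectral subspace of \eqref{evp2} for eigenvalues at most $\lambda_k^*$.

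You instead use two ingredients. The first is the diagonal structure of the rescaled form on $\operatorname{span}\{\mathbf e^n_1,\dots,\mathbf e^n_k\}$. The second is a $\Gamma$-liminf-type inequality: drop the nonnegative penalty and use weak lower semicontinuity of the gradient terms. Together these give $\hat R_\alpha\le\lambda_k^*$ on the $k$-dimensional space $V\subset H^1_\Sigma$, and the Poincar\'e characterization of $\mu_k(\alpha)$ finishes the argument.

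Your version is more economical. It never identifies the limits as eigenfunctions, never needs the weak formulation of the limit problem, and handles $c_i$ of either sign without the shift to $c_i\ge0$. The paper's version yields extra information: along subsequences, the rescaled eigenfunctions converge to eigenfunctions of \eqref{evp2}.

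One small point: ``uniformly bounded'' for $\lambda_j(\gamma^n)$ also uses the lower bound $\lambda_1(\gamma)\ge-\|\mathbf c\|_{L^\infty}$. You need it to extract convergent subsequences of the eigenvalues, so state it explicitly.
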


%

The inequality $\lambda_k(\gamma)\leq\mu_k(\gamma_1/\gamma_2)$ is analogous to the classical comparison between Robin and Dirichlet eigenvalues. Here, however, the interface penalty acts on the jump $u_2-u_1$. As both permeability parameters tend to infinity with their ratio converging to a positive finite value, the penalty enforces the continuity constraint $u_1=u_2$ on $\Sigma$, rather than a homogeneous Dirichlet condition on either component.

The following result describes the asymptotic behavior of the eigenvalues $\lambda_k(\gamma)$ of problem \eqref{evp1} as $\gamma\to (0,0)$. For $i=1,2$, let $\sigma_k^N(\Omega_i)$  denote the $k$-th eigenvalue of the Neumann problem on the subdomain $\Omega_i$:
\begin{align}\label{eqlambdaN}
\begin{cases}
-\Delta u_i+c_i(x)u_i=\sigma u_i, & x\in \Omega_i,\\
\partial_\nu u_i=0, & x\in\partial{\Omega_i}.
\end{cases}
\end{align}
Merge the two sequences $\{\sigma_k^N(\Omega_1)\}_{k=1}^\infty$ and $\{\sigma_k^N(\Omega_2)\}_{k=1}^\infty$ into a single nondecreasing sequence $\{\sigma_k^N\}_{k=1}^\infty$, counting multiplicities.


\begin{thm}\label{thm5}
 For every $k\ge 1$, we have
\begin{equation}\label{gamma-0}
\lambda_k(\gamma)\to  \sigma_k^N \quad \text{as}\;\gamma\to (0,0).
\end{equation}
\end{thm}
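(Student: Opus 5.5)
The plan is a two-sided estimate built on the Poincaré-type characterization of Theorem \ref{thm1}\eqref{thm1:minmax}\eqref{thm1:poincare}. We compare $R_\gamma$ with the decoupled quotient obtained by dropping the interface term,
\[
R^0_\gamma(\mathbf{u})=\frac{\sum_{i=1}^2\gamma_i^{-1}\int_{\Omega_i}\left(|\nabla u_i|^2+c_iu_i^2\right)\mathrm{d}x}{\sum_{i=1}^2\gamma_i^{-1}\|u_i\|_{L^2(\Omega_i)}^2},\qquad \mathbf{u}\in H^1\setminus\{0\}.
\]
The first key point is that the min-max values of $R^0_\gamma$ over $H^1$ equal $\sigma_k^N$ for every choice of $\gamma$. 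To see this, take the $L^2(\Omega_i)$-orthonormal Neumann eigenfunctions $\{\phi^{(i)}_j\}$ of \eqref{eqlambdaN}, and form the pairs $(\phi^{(1)}_j,0)$ and $(0,\phi^{(2)}_j)$. These pairs are mutually orthogonal both for $(\cdot,\cdot)_\gamma$ and for the form in the numerator of $R^0_\gamma$. They form a basis of $L^2$ and $H^1$, since each factor family does. Ordering them by eigenvalue, the standard min-max argument for a symmetric form with a complete orthogonal eigenbasis (the same argument used for Theorem \ref{thm1}) shows
\[
\sigma_k^N=\min_{V_k\subset H^1}\max_{\mathbf{u}\in V_k\setminus\{0\}}R^0_\gamma(\mathbf{u}).
\]
The weights $1/\gamma_i$ merely rescale each block and do not change the eigenvalues.

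\emph{Lower bound.} Since $R_\gamma(\mathbf{u})\ge R^0_\gamma(\mathbf{u})$ for all $\mathbf{u}\in H^1\setminus\{0\}$, comparing the two min-max formulas gives $\lambda_k(\gamma)\ge\sigma_k^N$ for every $\gamma$.

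\emph{Upper bound.} Let $V_k$ be the span of the first $k$ pairs in the merged ordering. For $\mathbf{u}=\sum_j a_j\mathbf{f}_j\in V_k$, orthogonality gives that $R^0_\gamma(\mathbf{u})$ is a weighted average of the corresponding eigenvalues, namely
\[
R^0_\gamma(\mathbf{u})=\frac{\sum_j w_j a_j^2\,\sigma_j^N}{\sum_j w_j a_j^2}\le\sigma_k^N,
\]
with $w_j=1/\gamma_i$ for a pair living on $\Omega_i$. For the interface term we use the trace inequality and the equivalence of norms on the finite-dimensional space $V_k$, which give a constant $C_k$ independent of $\gamma$ such that
\[
\int_\Sigma (u_2-u_1)^2\,\mathrm{d}s\le C\|\mathbf{u}\|_{H^1}^2\le C_k\|\mathbf{u}\|_{L^2}^2 .
\]
The denominator satisfies $(\mathbf{u},\mathbf{u})_\gamma\ge \|\mathbf{u}\|_{L^2}^2/\max(\gamma_1,\gamma_2)$. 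Combining these,
\[
R_\gamma(\mathbf{u})\le R^0_\gamma(\mathbf{u})+C_k\max(\gamma_1,\gamma_2)\le \sigma_k^N+C_k\max(\gamma_1,\gamma_2)\quad\text{on }V_k\setminus\{0\}.
\]
Theorem \ref{thm1}\eqref{thm1:minmax}\eqref{thm1:poincare} then yields
\[
\sigma_k^N\le\lambda_k(\gamma)\le\sigma_k^N+C_k\max(\gamma_1,\gamma_2),
\]
which proves \eqref{gamma-0}.

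The only step requiring care is the identification of the decoupled min-max values with $\sigma_k^N$ independently of the weights. In particular, one must check that the merged basis is complete in $H^1$, and that multiplicities are counted correctly when $\sigma^N$ values from $\Omega_1$ and $\Omega_2$ coincide. Everything else is direct estimation.
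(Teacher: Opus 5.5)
Your argument is correct, and it takes a genuinely different and more elementary route than the paper.

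\textbf{The paper's route.} For the upper bound, the paper builds a $\gamma$-dependent test space from Robin--Neumann eigenfunctions with parameter $2\gamma_i$. It splits the jump term via $|u_2-u_1|^2\le 2u_1^2+2u_2^2$, and then relies on continuity of the Robin eigenvalues as $\gamma_i\to0$. For the lower bound, it uses compactness. It rescales the eigenfunctions (by $\gamma_1^{-1/2}$, and the second component additionally by $\sqrt{\gamma_1/\gamma_2}$), passes to weak limits, and shows that these limits are Neumann eigenfunctions on each $\Omega_i$. It then identifies $\lim\lambda_j(\gamma)=\sigma_j^N$ by an induction on $j$ that tracks multiplicities through the $L^2$-orthonormality of the limits.

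\textbf{Your route.} You replace the entire compactness step with one observation: dropping the nonnegative jump term leaves a decoupled form whose min-max values equal $\sigma_k^N$ for every choice of weights. For the upper bound you use fixed, $\gamma$-independent Neumann test functions together with a trace bound on a finite-dimensional space.

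\textbf{What each buys.} Your approach gives the two-sided bound $\sigma_k^N\le\lambda_k(\gamma)\le\sigma_k^N+C_k\max(\gamma_1,\gamma_2)$, valid for all $\gamma$. This is stronger than the stated theorem: it gives a linear rate and shows the limit is approached from above. The paper's route, in exchange, also yields (along subsequences) convergence of suitably rescaled eigenfunctions to Neumann eigenfunctions. It also follows the same template as the proofs of Theorems \ref{thm4} and \ref{thm6}. There the limiting problem carries a constraint that appears only in the limit: continuity across $\Sigma$ in one case, a Dirichlet condition on $\Sigma$ in the other. In those settings no direct comparison of your kind gives a sharp lower bound.

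\textbf{The step you flag as delicate.} It is easier than you suggest. Suppose $\mathbf{v}$ is $\gamma$-orthogonal to the first $k-1$ merged pairs. Since the weights factor out block by block, each $v_i$ is $L^2(\Omega_i)$-orthogonal to the Neumann eigenfunctions already used on its own subdomain. Hence each nonzero component has Neumann quotient at least the next unused Neumann eigenvalue on that subdomain. Both of these next eigenvalues are at least $\sigma_k^N$, because $\sigma_k^N$ is the smaller of the two. Since $R^0_\gamma(\mathbf{v})$ is a positively weighted average of the component quotients, $R^0_\gamma(\mathbf{v})\ge\sigma_k^N$. Ties in the merge are handled automatically, and completeness of the merged basis is never needed.
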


 Next, we present the asymptotic behavior of the eigenvalues $\mu_k(\alpha)$ of problem \eqref{evp2} as $\alpha\to 0$ and $\alpha\to\infty$. For $i=1,2$, let $\sigma_k^{DN}(\Omega_i)$ be the $k$-th eigenvalue of the following mixed boundary value problem on $\Omega_i$:
\begin{align}\label{eqlambdaDN}
\begin{cases}
-\Delta u_i+c_i(x)u_i=\sigma u_i, & x\in \Omega_i,\\
u_i=0, & x\in\Sigma,\\
\partial_\nu u_i=0, & x\in \Gamma_i.
\end{cases}
\end{align}
Merge $\{\sigma_k^N(\Omega_1)\}_{k=1}^\infty$ and $\{\sigma_k^{DN}(\Omega_2)\}_{k=1}^\infty$ into a single nondecreasing sequence $\{\sigma_k^{N,DN}\}_{k=1}^\infty$, counting multiplicities.
Similarly, merge $\{\sigma_k^{DN}(\Omega_1)\}_{k=1}^\infty$ and $\{\sigma_k^N(\Omega_2)\}_{k=1}^\infty$ into a single nondecreasing  sequence $\{\sigma_k^{DN,N}\}_{k=1}^\infty$.


\begin{thm}\label{thm6}
For every $k\ge 1$, we have 
\begin{equation}\label{alpha-0,infty}
\mu_k(\alpha)\rightarrow \sigma_k^{N,DN}\; \text{ as }\alpha\rightarrow 0\,,\quad
\mu_k(\alpha)\rightarrow \sigma_k^{DN,N}\; \text{ as }\alpha\rightarrow\infty\,.
\end{equation}
\end{thm}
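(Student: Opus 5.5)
We prove Theorem~\ref{thm6} by the Poincar\'e (min--max) characterization of Theorem~\ref{thm2}(\ref{thm2:poincare}), combined with compactness. We treat $\alpha\to0$; the case $\alpha\to\infty$ follows by symmetry. Dividing $b_\alpha$ and $\langle\cdot,\cdot\rangle_\alpha$ by $\alpha$ exchanges the roles of $\Omega_1$ and $\Omega_2$ with parameter $1/\alpha$, and $\hat R_\alpha$ is unchanged. Let $\mathcal{H}:=H^1(\Omega_1)\times H^1_{0,\Sigma}(\Omega_2)$, where $H^1_{0,\Sigma}(\Omega_2)$ consists of $H^1(\Omega_2)$ functions with zero trace on $\Sigma$. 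The limit problem is the decoupled one: the Neumann problem on $\Omega_1$ together with the mixed problem \eqref{eqlambdaDN} on $\Omega_2$. Its eigenvalues are $\sigma_k^{N,DN}$, and they admit the min--max characterization over $k$-dimensional subspaces of $\mathcal{H}$ with the quotient
\[
Q(\mathbf{u})=\frac{\int_{\Omega_1}(|\nabla u_1|^2+c_1u_1^2)+\int_{\Omega_2}(|\nabla u_2|^2+c_2u_2^2)}{\|u_1\|^2_{L^2(\Omega_1)}+\|u_2\|^2_{L^2(\Omega_2)}}.
\]

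\emph{Upper bound.} We do not use $\mathcal H$ directly, because the weights distort the quotient. Instead we take the eigenfunctions themselves. Let $f_1,\dots,f_p$ be the Neumann eigenfunctions on $\Omega_1$ and $g_1,\dots,g_q$ the mixed eigenfunctions on $\Omega_2$ realizing $\sigma_1^{N,DN},\dots,\sigma_k^{N,DN}$, with $p+q=k$.

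The pair $(0,g_j)$ lies in $H^1_\Sigma$ and has $\hat R_\alpha=\sigma^{DN}_j(\Omega_2)$ for every $\alpha$. The function $f_j$ does not vanish on $\Sigma$, so we extend it by some fixed $\tilde f_j\in H^1(\Omega_2)$ matching its trace on $\Sigma$. The Lipschitz extension is bounded, so this is possible. The resulting pair lies in $H^1_\Sigma$, and its $\Omega_2$-contributions to $b_\alpha$ and to $\langle\cdot,\cdot\rangle_\alpha$ are $O(\alpha)$.

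On $V_k=\operatorname{span}$ of these $k$ functions, both $b_\alpha$ and $\langle\cdot,\cdot\rangle_\alpha$ are $O(\alpha)$-perturbations of forms that are block diagonal. The cross terms are $O(\alpha)$, and the $g$-block carries the common factor $\alpha$ in both forms. This common factor cancels in the generalized eigenvalue problem on $V_k$, so the maximum of $\hat R_\alpha$ over $V_k$ tends to $\max_j$ of the listed eigenvalues, which is $\sigma_k^{N,DN}$. To make this rigorous, we check that the finite-dimensional generalized eigenproblem has maximal eigenvalue continuous as $\alpha\to0$, after rescaling the $g$-coordinates by $\alpha^{-1/2}$. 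After this rescaling, the cross terms become $O(\alpha^{1/2})$. Hence $\limsup_{\alpha\to0}\mu_k(\alpha)\le\sigma_k^{N,DN}$.

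\emph{Lower bound.} Take the eigenbasis $\hat{\mathbf e}_1^\alpha,\dots,\hat{\mathbf e}_k^\alpha$ from Theorem~\ref{thm2}(\ref{thm2:eigenbasis}). They are $\langle\cdot,\cdot\rangle_\alpha$-orthonormal. Rescale the second component by setting $\mathbf w_j^\alpha=(\hat e^\alpha_{j,1},\alpha^{1/2}\hat e^\alpha_{j,2})$; these are $L^2$-orthonormal. For any $\mathbf u$ in their span,
\[
b_\alpha(\mathbf u,\mathbf u)=\int_{\Omega_1}(|\nabla u_1|^2+c_1u_1^2)+\int_{\Omega_2}(|\nabla w_2|^2+c_2w_2^2),
\]
so $Q(\mathbf w)\le\mu_k(\alpha)$ for every $\mathbf w$ in the span, and this is bounded by the upper bound. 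Thus $\mathbf w_j^\alpha$ is bounded in $H^1$. Along a subsequence $\mathbf w^\alpha_j\to\mathbf w_j$ weakly in $H^1$ and strongly in $L^2$ by Rellich on Lipschitz domains, and the limits are orthonormal.

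The key step is that $w_{j,2}=0$ on $\Sigma$. On $\Sigma$ the trace of $w^\alpha_{j,2}$ equals $\alpha^{1/2}$ times the trace of $\hat e^\alpha_{j,1}$, and the latter is bounded in $L^2(\Sigma)$ by the trace theorem. This trace therefore tends to $0$, and weak continuity of the trace gives $\mathbf w_j\in\mathcal H$. By weak lower semicontinuity, applied uniformly over the unit sphere of the span (a finite-dimensional argument using coefficient vectors), we get $\max_{\mathrm{span}}Q\le\liminf\mu_k(\alpha)$. The min--max over $\mathcal H$ then gives $\sigma_k^{N,DN}\le\liminf\mu_k(\alpha)$.

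The main obstacle is the upper bound bookkeeping. The two blocks scale differently in the weighted forms, and a naive test space built from $\mathcal H$ itself fails. I would handle it by working with the rescaled coordinates as in the lower bound, with the extension $\tilde f_j$ fixed independently of $\alpha$.
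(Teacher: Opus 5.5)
Your proof is correct. The upper bound is essentially the paper's, and the lower bound takes a genuinely different route.

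\textbf{Upper bound.} You use the same test space as the paper: Neumann eigenfunctions on $\Omega_1$ extended into $\Omega_2$, together with the pairs $(0,\phi^{DN}_\ell)$. The cross terms are $O(\sqrt{\alpha})$ after rescaling the $\Omega_2$-coordinates. Your finite-dimensional matrix formulation is slightly cleaner. Because the rescaled mass matrix tends to the identity, you need neither the paper's $L^2(\Omega_2)$-orthogonalization of the extensions against the $\phi_\ell^{DN}$ nor its three-case estimate.

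\textbf{Lower bound.} The paper passes to the limit in the weak eigen-equations. It uses a Sobolev extension to test with arbitrary $v_1\in H^1(\Omega_1)$, and divides by $\sqrt{\alpha_n}$ to test with $v_2\in H^1_{\Sigma,0}(\Omega_2)$. This shows each limit $\mathbf w_j$ is an eigenfunction of the decoupled problem with eigenvalue $\mu_j^*$. An induction with multiplicity counting then forces $\mu_j^*=\sigma_j^{N,DN}$.

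You need much less. It suffices that the $L^2$-orthonormal limits span a $k$-dimensional subspace of $\mathcal H$ on which $Q\le\liminf\mu_k$, by weak lower semicontinuity. The min--max principle for the decoupled form on $\mathcal H$ then gives the bound. This is shorter and avoids both the limiting equations and the induction. The cost is that you rely on the standard min--max characterization of the merged spectrum for the direct-sum form. You also get no information on the limits of eigenfunctions, which the paper's route does provide.

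\textbf{Minor points.}
\begin{itemize}
\item Choose $\alpha_n\to0$ with $\mu_k(\alpha_n)\to\liminf_{\alpha\to0}\mu_k(\alpha)$ before extracting the weakly convergent subsequence.
\item No uniformity over the unit sphere is needed. Applying lower semicontinuity at a maximizer of $Q$ on the finite-dimensional limit span suffices.
\item $f_j$ may vanish on $\Sigma$, but the extension argument works regardless.
\end{itemize}
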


The following result concerns the number of nodal domains of eigenfunctions. 
In the absence of an interface, an eigenfunction is continuous in $\Omega$ by the De Giorgi-Nash theorem, and its nodal domains are the connected components of $\left\{x\in \Omega:\; u(x)\neq 0\right\}$. The classical Courant nodal domain theorem asserts that the $k$-th eigenfunction has at most $k$ nodal domains (see, e.g., \cite[p.~19]{chavel1984eigenvalues} and \cite{Henrot2006}). 
When an interface is present, each eigenfunction $\mathbf{e}_k$ of problem \eqref{evp1} may jump across the interface and therefore cannot be treated globally as a continuous function. This prevents a direct application of the classical theorem. To resolve this difficulty, we extend the classical definition of nodal sets for  continuous functions to functions with interface jumps. 




%


\begin{deff}\label{def-nodal}
For $\mathbf{u}=(u_1,u_2)$ satisfying
$u_i\in C(\overline{\Omega_i})$ for $i=1,2$, 
the nodal set of $\mathbf{u}$ is
\begin{equation}\label{eq:sajdopi45jroei56}
Z(\mathbf{u}):=\bigcup_{i=1}^{2}\{x\in\Omega_i:\,u_i(x)=0\}
\bigcup \, \{x\in\Sigma:\, u_1(x)u_2(x)\le 0\}.
\end{equation}
A  connected component of $\Omega\setminus Z(\mathbf{u})$ is called a nodal domain of $\mathbf{u}$, and the number of nodal domains is denoted by $\omega(\mathbf{u})$.
\end{deff}


%

\begin{thm}\label{thmnodal} Fix $k\ge1$. Let $\mathbf{u}$ be any real-valued eigenfunction
of problem \eqref{evp1} corresponding to $\lambda_k$, and let
$\hat{\mathbf{u}}$ be any real-valued eigenfunction of problem
\eqref{evp2} corresponding to $\mu_k$. 
Then
\begin{equation}\label{omega}
\omega(\mathbf{u})\le k,\quad \omega(\hat{\mathbf{u}})\le k.
\end{equation}
\end{thm}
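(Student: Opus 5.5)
We adapt the classical Courant argument to the weighted variational framework of Theorems \ref{thm1} and \ref{thm2}; we describe it for problem \eqref{evp1} first. Let $\mathbf{u}$ be a real eigenfunction for $\lambda_k$. By Theorem \ref{thm1}(\ref{thm1:regularity}), $u_i\in C(\overline{\Omega_i})$, so $Z(\mathbf{u})$ is relatively closed and the nodal domains are open. Suppose, for contradiction, that $\omega(\mathbf{u})\ge k+1$, and pick nodal domains $D_1,\dots,D_{k+1}$. For each $j$ define $\mathbf{w}^j=(w^j_1,w^j_2)$ by
\[
w^j_i=u_i\chi_{D_j\cap\Omega_i},\qquad i=1,2 .
\]

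\textbf{Step 1 (admissibility and the key identity).} We must show $\mathbf{w}^j\in H^1$ and
\[
a_\gamma(\mathbf{w}^j,\mathbf{w}^j)=\lambda_k(\mathbf{w}^j,\mathbf{w}^j)_\gamma .
\]
Inside each $\Omega_i$, the set $D_j\cap\Omega_i$ is a union of components of $\{u_i\neq0\}$, on each of which $u_i$ has constant sign. Hence $w^j_i$ is a sum of pieces of the form $u_i^{\pm}$ restricted to open sets whose boundary inside $\Omega_i$ lies in $\{u_i=0\}$. The standard lemma that such truncations of a continuous $H^1$ function stay in $H^1$, with gradient $\nabla u_i\chi_{D_j}$, applies. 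Since $u_i$ is continuous up to $\Sigma$, the trace of $w^j_i$ on $\Sigma$ equals $u_i\chi_{\overline{D_j}}$ a.e.

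Next we test the weak form of \eqref{evp1} with $\mathbf{w}^j$. This produces $\sum_i\gamma_i^{-1}\int_{D_j\cap\Omega_i}(|\nabla u_i|^2+c_iu_i^2)$ plus the interface term
\[
\int_\Sigma(u_2-u_1)(w^j_2-w^j_1)\,ds .
\]
The interface term must be compared with $\int_\Sigma(w^j_2-w^j_1)^2\,ds$. On $\Sigma\setminus Z(\mathbf{u})$ we have $u_1u_2>0$, so the points on both sides belong to the same nodal domain, and both pairs agree there. On $\Sigma\cap Z(\mathbf{u})$ we have $u_1u_2\le0$, and a point of $\Sigma$ on one side may lie in $\overline{D_j}$ while the other side lies in a different nodal domain. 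Then the test function sees only, say, $u_1$, and the term reads $-(u_2-u_1)u_1=u_1^2-u_1u_2\ge u_1^2$. So we only obtain the inequality
\[
a_\gamma(\mathbf{w}^j,\mathbf{w}^j)\le\lambda_k(\mathbf{w}^j,\mathbf{w}^j)_\gamma .
\]
This suffices for the min-max argument, since only the upper bound is used. For a linear combination $\mathbf{w}=\sum_j c_j\mathbf{w}^j$ the cross interface terms do not vanish, so we must verify $R_\gamma(\mathbf{w})\le\lambda_k$ directly. We will check pointwise on $\Sigma$ that
\[
(w_2-w_1)^2\le\sum_j c_j^2(u_2-u_1)(w^j_2-w^j_1).
\]
Where $u_1,u_2$ lie in distinct domains $D_j,D_l$ with $u_1u_2\le0$, this reads
\[
(c_lu_2-c_ju_1)^2\le c_l^2(u_2^2-u_1u_2)+c_j^2(u_1^2-u_1u_2),
\]
which reduces to $-u_1u_2(c_j-c_l)^2\ge0$, true. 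This sign computation is the main obstacle and the reason for the choice of $Z(\mathbf{u})$ on $\Sigma$.

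\textbf{Step 2 (contradiction).} Choose $c_1,\dots,c_k$, not all zero, so that $\mathbf{w}=\sum_{j=1}^k c_j\mathbf{w}^j\perp_\gamma E_{k-1}$. By Theorem \ref{thm1}(\ref{thm1:eigenbasis}), $R_\gamma(\mathbf{w})\ge\lambda_k$, so $\mathbf{w}$ attains the minimum and is therefore an eigenfunction for $\lambda_k$ (standard: a minimizer of the constrained Rayleigh quotient is an eigenfunction, using the spectral expansion). But $\mathbf{w}$ vanishes on $D_{k+1}$, an open set. A unique continuation property for $-\Delta+c_i$ in $\Omega_i$ then forces $w_i\equiv0$ on the component $\Omega_i$ meeting $D_{k+1}$. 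The interface conditions give $\partial_\nu w_i$ and, via the jump relations, the Cauchy data on $\Sigma$ of the other component. Hence $\mathbf{w}\equiv0$, a contradiction. Here we use that $\Omega_1,\Omega_2$ are connected, and that the Cauchy data on $\Sigma$ of the other side vanish, so unique continuation across Lipschitz $\Sigma$ from Cauchy data applies, via extension by zero.

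\textbf{Step 3 (problem \eqref{evp2}).} The same argument is run with $b_\alpha$, $\langle\cdot,\cdot\rangle_\alpha$ and $H^1_\Sigma$. Here $\hat{\mathbf{u}}\in C(\overline{\Omega})$ is a single continuous function and there is no interface term. The truncations of $\hat{\mathbf{u}}$ lie in $H^1(\Omega)\cong H^1_\Sigma$. The identity
\[
b_\alpha(\mathbf{w}^j,\mathbf{w}^l)=\mu_k\delta_{jl}\langle\mathbf{w}^j,\mathbf{w}^j\rangle_\alpha
\]
holds exactly, and Theorem \ref{thm2}(\ref{thm2:eigenbasis}) together with unique continuation concludes as before.
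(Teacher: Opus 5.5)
Your proposal is correct and follows essentially the paper's route: restrict $\mathbf{u}$ to $k$ nodal domains, use the sign of the interface terms where $u_1u_2\le 0$ to get $R_\gamma(\mathbf{w})\le\lambda_k$ for some $\mathbf{w}\perp_\gamma E_{k-1}$, conclude that $\mathbf{w}$ is an eigenfunction vanishing on $D_{k+1}$, and reach a contradiction via unique continuation in one subdomain, vanishing of the other trace on $\Sigma$, and gluing by zero. Your pointwise inequality $(w_2-w_1)^2\le\sum_j c_j^2(u_2-u_1)(w^j_2-w^j_1)$ on $\Sigma$ is the integrand-level form of the paper's bounds $a_\gamma(\mathbf{v}_j,\mathbf{v}_\ell)\ge0$ for $j\ne\ell$ and $a_\gamma(\mathbf{v}_j,\mathbf{r})\ge0$, combined with $c_jc_\ell\le\frac12(c_j^2+c_\ell^2)$; one small slip is that the trace of $w^j_i$ on $\Sigma$ is $u_i$ times the indicator of $\overline{D_j\cap\Omega_i}$, not of $\overline{D_j}$, though your sign computation already uses the correct one-sided version.
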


The spectral and parameter-dependence results above extend to the
divergence-form interface problems under the coefficient assumptions
in Remark \ref{rem-generalization}; see also Remark \ref{rem-g2}.
The nodal-domain bound in Theorem \ref{thmnodal} extends under the
additional regularity assumption in Proposition \ref{rem-nodal-generalization}.
Without this additional assumption, Proposition \ref{prop1} gives
a multiplicity-dependent nodal bound.



The rest of this paper is organized as follows. Section \ref{sec3} proves Theorems \ref{thm1} and \ref{thm2}, Section \ref{sec4} proves Theorems \ref{thm3}--\ref{thm6}, and Section \ref{sec5} proves Theorem \ref{thmnodal}. Finally, Section \ref{sec6} presents one-dimensional examples illustrating the dependence of eigenvalues on the interface parameters and the nodal-domain estimates for eigenfunctions.

\section{Proofs of Theorems \ref{thm1} and \ref{thm2}}\label{sec3}


We first define the complexification of a real Banach space $X$ by $X_\mathbb{C}:=X \oplus \sqrt{-1}X$. In the following two definitions, the real bilinear forms and inner products introduced in Section \ref{sec2} are understood through their complex-bilinear extensions to the corresponding complexified spaces.  With this convention, we give the precise definitions of eigenvalues and eigenfunctions for problems \eqref{evp1} and \eqref{evp2}, and then show that all eigenvalues are real.

%

\begin{deff}\label{deff1}
We say that $\lambda\in\mathbb{C}$ is an eigenvalue of \eqref{evp1} if there exists a  function $\mathbf{u}\in H^1_{\mathbb{C}}\setminus\{0\}$ such that
\begin{equation}\label{eq11}
a_\gamma(\mathbf{u},\mathbf{v})=\lambda(\mathbf{u},\mathbf{v})_\gamma\qquad\forall\;\mathbf{v}\in H^1_{\mathbb{C}}.	
\end{equation}
Such a function $\mathbf{u}$ is called an eigenfunction associated with $\lambda$.
 \end{deff}



A standard weak formulation for
\eqref{evp1} uses the bilinear form \begin{equation}
\tilde a(\mathbf{u},\mathbf{v})=\sum_{i=1}^{2}\int_{\Omega_i}\left[\nabla u_i\cdot\nabla v_i+c_i(x)u_iv_i\right]\,\mathrm{d}x+\int_{\Sigma}(u_2-u_1)(\gamma_2v_2-\gamma_1v_1)\,\mathrm{d}s,
\end{equation}
which is nonsymmetric when $\gamma_1 \neq \gamma_2$ (see, e.g., \cite{2023Some}). In contrast, our bilinear form $a_\gamma(\cdot,\cdot)$ defined in \eqref{bf-1} is symmetric. This symmetry  immediately yields the reality of the eigenvalues.

Let $(\lambda,\mathbf{u})$ be an eigenpair of \eqref{evp1}. Taking $\mathbf{v}=\overline{\mathbf{u}}$ in \eqref{eq11} gives
\[
a_\gamma(\mathbf{u},\overline{\mathbf{u}})
=\lambda(\mathbf{u},\overline{\mathbf{u}})_\gamma.
\]
Since the coefficients of $a_\gamma$ are real, $a_\gamma(\mathbf{u},\overline{\mathbf{u}})\in\mathbb{R}$, whereas $(\mathbf{u},\overline{\mathbf{u}})_\gamma>0$ for $\mathbf{u}\neq 0$. Hence $\lambda\in\mathbb{R}$. Writing $\mathbf{u}=\operatorname{Re}\mathbf{u}+\sqrt{-1}\operatorname{Im}\mathbf{u}$, and using the fact that $\lambda$ is real, we further see from \eqref{eq11} that each of the nonzero functions  $\operatorname{Re}\mathbf{u}$ and $\operatorname{Im}\mathbf{u}$ is an eigenfunction associated with $\lambda$. Consequently, every eigenvalue of \eqref{evp1} is real, and it suffices to consider real-valued eigenfunctions. 


The definition for \eqref{evp2} is analogous.
\begin{deff}\label{deff2}
	We say that $\mu\in\mathbb{C}$ is an eigenvalue of \eqref{evp2} if there exists a  function $\mathbf{u}\in (H_\Sigma^1)_\mathbb{C}\setminus\{0\}$  such that
	\begin{equation}\label{eq19}
		b_\alpha(\mathbf{u},\mathbf{v})=\mu\langle\mathbf{u},\mathbf{v}\rangle_{\alpha} \quad \forall\; \mathbf{v}\in (H_\Sigma^1)_\mathbb{C}.
	\end{equation}
	Such a function $\mathbf{u}$ is called an eigenfunction associated with $\mu$. 
\end{deff}


The symmetry of $\langle\cdot,\cdot\rangle_\alpha$ and $b_\alpha(\cdot,\cdot)$ similarly implies
that every eigenvalue of \eqref{evp2} is real and we need only consider  real-valued eigenfunctions.


We now turn to the variational characterizations of the eigenvalues for problem \eqref{evp1}. The proof of Theorem \ref{thm1} proceeds through a sequence of lemmas: Lemma \ref{lem:regularityofeigenfunction} establishes part (\ref{thm1:regularity}), Lemma \ref{lem1} establishes part (\ref{thm1:principal}), Lemma \ref{lem2} establishes parts (\ref{thm1:spectrum}) and (\ref{thm1:eigenbasis}), and Lemmas \ref{lem3} and \ref{lem4} establish parts (\ref{thm1:minmax})(\ref{thm1:courant}) and (\ref{thm1:minmax})(\ref{thm1:poincare}), respectively.



We begin with a regularity result for Robin boundary value problems. 
Applying this result to each component of an eigenfunction of \eqref{evp1} gives continuity up to the interface. 


\begin{prop}\label{prop:regularityforrobin}
Let $N\ge 2$, and let $D\subset\mathbb{R}^N$ be a bounded Lipschitz domain.
Assume that $c\in L^\infty(D)$ and $\beta\in L^\infty(\partial D)$. If
$u\in H^1(D)$ is a weak solution of 
\begin{equation*}
	\left\{
	\begin{array}{rcl}
		-\Delta u + c(x)u &=&f \quad \text{in}\quad D,\\[0.2cm]
		\displaystyle\frac{\partial u}{\partial\nu}+\beta(x)u &=&g  \quad \text{on}\quad \partial D,
	\end{array}
	\right.
\end{equation*}
where $\nu$ is the outer unit normal to $\partial D$, then the following assertions hold.
\begin{enumerate}[label=\textup{(\roman*)}]
\item For every $q\in[2N/(N+2),N/2)$ and $\varepsilon>0$, if
\begin{equation*}
	f\in L^{q+\varepsilon}(D),\qquad
g\in L^{\frac{(N-1)q}{N-q}+\varepsilon}(\partial D),
\end{equation*}
then 
\begin{equation*}
	u\in L^{\frac{Nq}{N-2q}}(D),\qquad
u|_{\partial D}\in L^{\frac{(N-1)q}{N-2q}}(\partial D).
\end{equation*}
\item For every $\varepsilon>0$, there exists $\theta\in(0,1)$ 
such that, if 
\begin{equation*}
	f\in L^{\frac N2+\varepsilon}(D),\qquad
g\in L^{N-1+\varepsilon}(\partial D),
\end{equation*}
then $u\in C^{\theta}(\overline D)$.
\end{enumerate}	
\end{prop}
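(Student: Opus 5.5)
The natural route is De Giorgi--Nash--Moser iteration adapted to the Robin problem on a Lipschitz domain, with the boundary integrals handled through the trace embedding $H^1(D)\hookrightarrow L^{2(N-1)/(N-2)}(\partial D)$ (any finite exponent when $N=2$). For simplicity I would first reduce to the case $c\ge 0$, $\beta\ge 0$ and coercive form. Write $c=c^+-c^-$, $\beta=\beta^+-\beta^-$ and move $c^-u$ and $\beta^-u$ to the right-hand sides, giving $\tilde f=f+c^-u$ and $\tilde g=g+\beta^-u$. Then add $Ku$ to both sides, with $K$ large, so the bilinear form
\[
\int_D\nabla u\cdot\nabla v+(c^++K)uv\,dx+\int_{\partial D}\beta^+uv\,ds
\]
is coercive on $H^1(D)$. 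Since $u\in H^1(D)$ gives $u\in L^{2N/(N-2)}(D)$ and $u|_{\partial D}\in L^{2(N-1)/(N-2)}(\partial D)$, the modified data $\tilde f,\tilde g$ gain integrability only as far as $u$ does. This is the reason for a bootstrap.

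\textbf{Part (i).} I would use Moser-type test functions $v=\operatorname{sign}(u)\min(|u|,M)^{p-1}\cdot$(suitable truncation), or equivalently $v=|u_M|^{2s}u_M$ with $u_M$ the truncation. Set $w=|u_M|^{s}u_M$. The gradient term controls $\|\nabla w\|_{L^2}^2$, and the zero-order terms are nonnegative. The right-hand side $\int f v+\int_{\partial D} g v$ is estimated by Hölder. Apply the Sobolev embedding to $w$ in the interior and the trace embedding for $w$ on $\partial D$. Matching exponents then gives precisely the relation $q\mapsto Nq/(N-2q)$ in $D$ and $(N-1)q/(N-2q)$ on $\partial D$ when $f\in L^q$ and $g\in L^{(N-1)q/(N-q)}$. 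The $\varepsilon$ leaves room to absorb the lower-order contributions $c^-u$ and $\beta^-u$: iterate finitely many times, improving the integrability of $u$ each step, until those terms lie in the required spaces. Then let $M\to\infty$ by monotone convergence. This is the standard Stampacchia/Moser $L^p$ regularity for Neumann/Robin problems on Lipschitz domains, as in Daners or Nittka.

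\textbf{Part (ii).} First apply (i) repeatedly, with $q$ increasing toward $N/2$, to get $u\in L^r(D)$ and $u|_{\partial D}\in L^r(\partial D)$ for all $r<\infty$. Then $\tilde f\in L^{N/2+\varepsilon'}$ and $\tilde g\in L^{N-1+\varepsilon'}$. Boundedness follows from De Giorgi truncations $(u-k)^+$, using the same Sobolev/trace pair, with data above the critical exponents; this gives $u\in L^\infty$. Hölder continuity up to the boundary then comes from De Giorgi oscillation decay on balls centered at boundary points. Locally flatten the Lipschitz boundary by a bi-Lipschitz map, which keeps the equation in divergence form with bounded measurable uniformly elliptic coefficients and a Robin boundary term. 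The boundary datum enters the Caccioppoli inequality as $\int_{\partial D\cap B_r}|g|(u-k)^+$. With $g\in L^{N-1+\varepsilon}$ this scales like $r^{N-2+\delta}$, which is the Morrey-type smallness needed. The same holds for $f\in L^{N/2+\varepsilon}$. Hence $u$ is in the De Giorgi class up to the boundary and $u\in C^\theta(\overline D)$, with $\theta$ depending on $N$, $\varepsilon$, the Lipschitz character of $D$ and the $L^\infty$ bounds.

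The main obstacle is the boundary De Giorgi estimate. On a merely Lipschitz domain one must check that the flattened problem, together with the scaled trace inequality on $\partial D\cap B_r$, gives Caccioppoli and isoperimetric-type inequalities with constants uniform in $r$. In practice I would cite Nittka's regularity theorem for Robin problems on Lipschitz domains, or Daners' estimates, rather than redo this.
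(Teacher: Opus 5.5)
Your proposal is correct and in substance matches the paper: the paper proves this proposition in one line by applying Theorem 3.14 of Nittka's paper on Lipschitz domains with $a_{ij}=\delta_{ij}$, $b_j=c_j=0$, $d=c$, $f_0=f$, $f_j=0$, which is the same result you say you would cite for the boundary De Giorgi step. Your sign reduction, balanced Moser exponents (which do reproduce $Nq/(N-2q)$ and $(N-1)q/(N-2q)$ from $f\in L^q$, $g\in L^{(N-1)q/(N-q)}$), finite bootstrap, and Morrey-type scaling $r^{N-2+\delta}$ are a faithful outline of the machinery behind that theorem, so the paper simply skips the sketch and cites the result directly.
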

\begin{proof}[{\bf Proof.}]
It follows immediately by applying Theorem 3.14 in \cite{Nittka2011Regularity}  with $\Omega$ replaced by $D$, $a_{ij}=\delta_{ij}$,
$b_j=0$, $c_j=0$, $d=c(x)$, $f_0=f$, and $f_j=0$ for $j=1,\ldots,N$.	
\end{proof}

\begin{lem}\label{lem:regularityofeigenfunction}
Let $\mathbf{u}=(u_1,u_2)\in H^1$ be an
eigenfunction of \eqref{evp1} associated with an eigenvalue $\lambda$. Then
\begin{equation*}
	u_i\in C(\overline{\Omega_i}),\qquad i=1,2.
\end{equation*}
\end{lem}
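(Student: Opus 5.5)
The plan is to decouple the eigenvalue problem into two Robin problems, one on each subdomain, and then apply Proposition \ref{prop:regularityforrobin} repeatedly to raise integrability until continuity follows. Taking test functions $\mathbf{v}=(v_1,0)$ and $\mathbf{v}=(0,v_2)$ in \eqref{eq11} shows that $u_1$ is a weak solution of
\[
-\Delta u_1+c_1u_1=\lambda u_1 \ \text{in }\Omega_1,\qquad \partial_\nu u_1+\beta_1 u_1=g_1 \ \text{on }\partial\Omega_1 .
\]
Here $\beta_1=\gamma_1\chi_\Sigma$ and $g_1=\gamma_1 u_2\chi_\Sigma$, with $g_1=0$ on $\Gamma_1$. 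The normal on $\Sigma$ is outward for $\Omega_1$, so this matches \eqref{evp1}$_2$. On $\Omega_2$ the outward normal is $-\nu$ on $\Sigma$, and the same computation gives a Robin problem with $\beta_2=\gamma_2\chi_\Sigma$ and $g_2=\gamma_2u_1\chi_\Sigma$. In both cases $\beta_i\in L^\infty(\partial\Omega_i)$, and each boundary datum is the trace of the other component. When $N=1$ we have $H^1\subset C(\overline{\Omega_i})$ and nothing needs to be proved, so we assume $N\ge2$.

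The initial integrability is as follows. Since $u_i\in H^1(\Omega_i)$, Sobolev embedding gives $u_i\in L^{2N/(N-2)}(\Omega_i)$. The trace theorem gives $u_i|_{\partial\Omega_i}\in L^{2(N-1)/(N-2)}(\partial\Omega_i)$. For $N=2$, both exponents can be taken arbitrarily large and finite.

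Next comes a simultaneous bootstrap on both components. Suppose that at some stage $u_i\in L^{p}(\Omega_i)$ and $u_i|_\Sigma\in L^{r}(\Sigma)$ for $i=1,2$. Then $f_i=(\lambda)u_i\in L^p$ and $g_i\in L^r(\partial\Omega_i)$. If $p>N/2$ and $r>N-1$, part (ii) of Proposition \ref{prop:regularityforrobin} yields $u_i\in C^\theta(\overline{\Omega_i})$, and the proof is complete. Otherwise, choose $q<N/2$ and $\varepsilon>0$ with $q+\varepsilon\le p$ and $(N-1)q/(N-q)+\varepsilon\le r$. Part (i) then improves the exponents to
\[
p'=\frac{Nq}{N-2q},\qquad r'=\frac{(N-1)q}{N-2q}.
\]
The same step applies to both $u_1$ and $u_2$, and since each component's boundary datum is the other's trace, the improved traces feed each other at the next stage.

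The main obstacle is checking that this iteration actually gains a fixed amount at each step and reaches the thresholds in finitely many steps. The volume and boundary exponents are coupled, and the map $q\mapsto Nq/(N-2q)$ gives only a strict improvement near the starting point $q=2N/(N+2)$, so I would track the limiting exponent explicitly. Take $q$ as large as the constraints allow; the $\varepsilon$ loss can be absorbed because the gains are uniform. Writing $1/p'=1/q-2/N$ shows that $1/p$ decreases by at least roughly $2/N$ at each step, and the trace exponent satisfies the matching relation. Both constraints are consistent with starting from $p=2N/(N-2)$ and $r=2(N-1)/(N-2)$, which correspond exactly to $q$ near $2N/(N+2)$. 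Hence after finitely many steps, about $N/4$ of them, one reaches $p>N/2$ and $r>N-1$. If some step would push $q$ past $N/2$, I stop the iteration there and invoke part (ii) directly.
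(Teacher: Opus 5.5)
Your strategy is the paper's. You decouple into the two Robin problems with $\beta_i=\gamma_i$ and $g_i=\gamma_i u_j$ on $\Sigma$, raise integrability with Proposition~\ref{prop:regularityforrobin}(i), and conclude with part (ii). The one structural difference is that you keep $\lambda u_i$ as a source term. The paper instead writes $-\Delta u_i+(c_i-\lambda)u_i=0$; since $c_i-\lambda\in L^\infty(\Omega_i)$, this gives $f=0$, and only the trace exponent has to be iterated. Your coupled iteration also terminates. However, the justification you give for the step you call the main obstacle is wrong and needs to be replaced.

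Every output of (i) satisfies $r'=\frac{N-1}{N}p'$, and so do the starting exponents $p_0=\frac{2N}{N-2}$ and $r_0=\frac{2(N-1)}{N-2}$. Under $r=\frac{N-1}{N}p$, the trace requirement $\frac{(N-1)q}{N-q}<r$ is equivalent to $\frac1q>\frac1p+\frac1N$. This is strictly stronger than the interior requirement $q<p$. So the boundary datum, not the source term, limits $q$, and you cannot take $q\approx p$. The relation $\frac1{p'}=\frac1q-\frac2N$ therefore only lowers $\frac1p$ by just under $\frac1N$ per step (equivalently, $\frac1r$ by just under $\frac1{N-1}$), not by $\frac2N$. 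Likewise, the binding threshold for (ii) is $r>N-1$, which under the relation means $p>N$. So roughly $N/2$ steps are needed, not $N/4$.

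To make the step rigorous, fix $0<\delta<\frac1N$ and take $\frac1q=\frac1p+\frac1N+\delta$. As long as the thresholds of (ii) fail, one has $\frac1p\ge\frac1N$, which gives $q<\frac N2$; and $p\ge p_0$ gives $q>\frac{2N}{N+2}$. Both constraints hold strictly, so a suitable $\varepsilon$ exists, and $\frac1p$ drops by $\frac1N-\delta$ per step. This is the counterpart of the paper's estimate $\frac1{p'}<\frac1p-\frac1{2(N-1)}$, where $p$ denotes the trace exponent. With this correction your argument is complete.
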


\begin{proof}[{\bf Proof.}]

If $N=1$, the Sobolev embedding theorem yields  $u_i\in C(\overline{\Omega_i})$ for $i=1,2$. In what follows,
we fix $N\ge 2$. For $i=1,2$, let $\nu_i$ denote the outer unit normal to $\Omega_i$. The
 equations in \eqref{evp1}  can be written as the Robin
 problem
 \begin{equation}\label{eq5.1}
 	\left\{
 	\begin{array}{rcl}
 		-\Delta u_i+\left(c_i(x)-\lambda\right)u_i &=&0\quad\,\, \text{in}\quad \Omega_i,\\[0.2cm]
 		\displaystyle \frac{\partial u_i}{\partial\nu_i}+\beta_i(x) u_i &=&g_i \quad \text{on}\quad \partial\Omega_i,
 	\end{array}
 	\right.
 \end{equation}
 where 
 \begin{equation*}
 	\beta_i(x) =\begin{cases}
 		\gamma_i, & x\in \Sigma,\\
 		0, & x\in\Gamma_i,
 	\end{cases}
 \end{equation*}
 and 
 \begin{equation*}
 	g_i(x) = \begin{cases}
 		\gamma_i u_j(x) ,& x\in\Sigma,\\
 		0,& x\in \Gamma_i, 
 	\end{cases}
 \end{equation*}
 with $j\in \{1,2\}$, $j\neq i$. 
 Thus
 $\beta_i\in L^\infty(\partial\Omega_i)$, and Proposition
 \ref{prop:regularityforrobin} applies to each component.

Suppose first that $N\geq3$, and set
\begin{equation*}
	p_0=\frac{2(N-1)}{N-2}.
\end{equation*}
By the trace theorem, $u_i|_{\Sigma}\in L^{p_0}(\Sigma)$ for $i=1,2$.
If $p_0>N-1$, the desired conclusion already holds. Otherwise, we claim that
this integrability can be improved until the trace belongs to $L^p(\Sigma)$
for some $p>N-1$. Indeed, assume that both traces belong to $L^p(\Sigma)$ for
some $p_0\leq p\leq N-1$. Choose
\[
\frac{2N}{N+2}\leq q<\frac N2
\]
so that
\[
\frac{(N-1)q}{N-q}<p<\frac{(N-1)q}{N-2q}
\quad\text{and}\quad
\frac{N-2q}{(N-1)q}<\frac1p-\frac{1}{2(N-1)}.
\]
Such a choice is possible by taking $q$ sufficiently close to
$Np/(N-1+p)$. Choose $\varepsilon>0$ such that
\[
\frac{(N-1)q}{N-q}+\varepsilon\leq p.
\]
Since $g_i\in L^p(\partial\Omega_i)$ and the interior right-hand side is
zero,  Proposition \ref{prop:regularityforrobin}(i) yields
\[
u_i|_{\Sigma}\in L^{p'}(\Sigma),
\qquad
p'=\frac{(N-1)q}{N-2q},
\]
for $i=1,2$. Moreover, 
\begin{equation}\label{eq:iodsfdhkuifghf456fdgh5t}
	p'>p,\quad\text{and}\quad \frac1{p'}<\frac1p-\frac{1}{2(N-1)}.
\end{equation}
Let $p_1=p_0'$, $p_2=p_1'$, $\cdots$. Then \eqref{eq:iodsfdhkuifghf456fdgh5t} implies that, after finitely many iterations, $u_i|_{\Sigma}$ belongs to
$L^{p_j}(\Sigma)$ for some $p_j>N-1$.

For $N=2$, the trace theorem gives $u_i|_{\Sigma}\in L^p(\Sigma)$ for
every finite $p$, so we may directly choose $p>N-1$. Consequently, for $N\ge 2$,
 there exist $p>N-1$ and $\varepsilon>0$ such that
\[
g_i\in L^{N-1+\varepsilon}(\partial\Omega_i),
\qquad i=1,2.
\]
Applying  Proposition \ref{prop:regularityforrobin}(ii) to the
Robin problem above gives $u_i\in C^{\theta_i}(\overline{\Omega_i})$
for some $\theta_i\in(0,1)$. In particular,
$u_i\in C(\overline{\Omega_i})$ for $i=1,2$.
\end{proof}

\begin{lem}\label{lem1}
	Let 
	\begin{equation}\label{eq:dfiosf8dfhfdsyu}
		\lambda_1=\inf_{\mathbf{u}\in H^1,\;\mathbf{u}\neq 0}R_\gamma(\mathbf{u}),
	\end{equation}
	where $R_\gamma(\mathbf{u})$ is as in \eqref{R(u)}. Then the following statements hold. 
\begin{enumerate}[(i)]
\item There exists  $\mathbf{e}_1\in H^1$, normalized by $(\mathbf{e}_1,\mathbf{e}_1)_{\gamma}=1$,
such that $\lambda_1=R_{\gamma}(\mathbf{e}_1)$. 
	\item $\lambda_1$ is the smallest eigenvalue of \eqref{evp1}, and $\mathbf{e}_1$  is an eigenfunction associated with $\lambda_1$. 
	\item Every real-valued eigenfunction $\mathbf u=(u_1,u_2)$ associated with $\lambda_1$ satisfies either 
\begin{equation*}
	u_i>0\quad \text{on}\quad \overline{\Omega_i},\quad i=1,2,
\end{equation*}
or
\begin{equation*}
	u_i<0\quad \text{on}\quad \overline{\Omega_i},\quad i=1,2.
\end{equation*}
%
	\item The eigenspace of $\lambda_1$ is one-dimensional. 
\end{enumerate}
\end{lem}

\begin{proof}[{\bf Proof.}] (i) We first show $\lambda_1 >-\infty$.  For each nonzero $\mathbf{u}\in H^1$,   we have 
\begin{equation}
	R_{\gamma} (\mathbf{u})\;\ge\; \frac{\displaystyle \sum_{i=1}^{2}\frac{1}{\gamma_i}\int_{\Omega_i}c_i(x)|u_i|^2 \,\mathrm{d}x}{\displaystyle \sum_{i=1}^{2}\frac{1}{\gamma_i}\int_{\Omega_i}|u_i|^2\, \mathrm{d}x}  \;\ge\; -\|\mathbf{c}\|_{L^\infty},
\end{equation}
where \begin{equation*}
	\|\mathbf{c}\|_{L^\infty}=\max\{\|c_1\|_{L^\infty(\Omega_1)},\|c_2\|_{L^\infty(\Omega_2)}\}.
\end{equation*}
Hence,
\begin{equation}\label{eq14}
	\lambda_1\ge -\|\mathbf{c}\|_{L^\infty}>-\infty. 
\end{equation}
%

We next show that the infimum in \eqref{eq:dfiosf8dfhfdsyu} can be attained by some $\mathbf{e}_1$ satisfying $(\mathbf{e}_1,\mathbf{e}_1)_{\gamma}=1$.
Let $\{\mathbf{u}_n=(u_{n1},u_{n2})\}_{n=1}^\infty \subset H^1\setminus\{0\}$ be a minimizing sequence for $R_{\gamma}$, so that $R_\gamma(\mathbf{u}_n)\rightarrow \lambda_1$ as $n\rightarrow\infty$.  Since $R_{\gamma}$ is homogeneous of degree zero, we may normalize the minimizing sequence by 
\begin{equation} \label{eq:sdfoi0d76f8967dfskuif}
(\mathbf{u}_n,\mathbf{u}_n)_\gamma=1.	
\end{equation}
Then as $n\to\infty$, 
\begin{equation}\label{eq:dsfoi0fg76544dsfggg908}
R_\gamma(\mathbf{u}_n)=\sum_{i=1}^{2}\frac{1}{\gamma_i}\int_{\Omega_i}\left(|\nabla u_{ni}|^2+c_i(x)|u_{ni}|^2 \right)\,\mathrm{d}x+\int_{\Sigma}|u_{n2}-u_{n1}|^2 \,\mathrm{d}s\rightarrow \lambda_1. 	
\end{equation}
From \eqref{eq:sdfoi0d76f8967dfskuif}, we see that 
\begin{equation*}
	\left|\sum_{i=1}^{2}\frac{1}{\gamma_i}\int_{\Omega_i}c_i(x)|u_{ni}|^2 \mathrm{d}x\right|\leq\|\mathbf{c}\|_{L^\infty}(\mathbf{u}_n,\mathbf{u}_n)_\gamma=\|\mathbf{c}\|_{L^\infty}.
\end{equation*}
This, together with \eqref{eq:sdfoi0d76f8967dfskuif} and \eqref{eq:dsfoi0fg76544dsfggg908}, implies
that $\{\mathbf{u}_n\}_{n=1}^\infty$ is uniformly bounded in $H^1$. Therefore, by the reflexivity of $H^1$ and the compactness of the embedding $H^1\hookrightarrow L^2$, there exist
 $\mathbf{e}_1\in H^1$ and a subsequence, still denoted by $\{\mathbf{u}_n\}$, such that
\begin{equation}\label{weak-conv}
\mathbf{u}_n\rightharpoonup \mathbf{e}_1\;\;\mbox{in}\;\;H^1,\quad \mathbf{u}_n\rightarrow\mathbf{e}_1\;\;\mbox{in}\;\;L^2 \;\;\mbox{as}\;n\rightarrow\infty,\quad \mbox{and}\quad(\mathbf{e}_1,\mathbf{e}_1)_{\gamma}=1.
\end{equation}
Here and in what follows, `$\rightharpoonup$' denotes weak convergence and `$\rightarrow$' denotes strong convergence. 
We now introduce the following equivalent inner product and its induced norm on $H^1$:
\begin{equation}\label{eq22}
	(\mathbf{u},\mathbf{v})_*=a_\gamma(\mathbf{u},\mathbf{v})+(\|\mathbf{c}\|_{L^\infty}+1)(\mathbf{u},\mathbf{v})_{\gamma},\quad \|\mathbf{u}\|_*=\sqrt{(\mathbf{u},\mathbf{u})_*}.
\end{equation}
By the weak lower semicontinuity of the norm and \eqref{weak-conv}, we have 
\begin{equation*}
	R_{\gamma} (\mathbf{e}_1) = (\mathbf{e}_1,\mathbf{e}_1)_*-(\|\mathbf{c}\|_{L^\infty}+1)(\mathbf{e}_1,\mathbf{e}_1)_{\gamma} \le \liminf_{n\to\infty}  R_{\gamma} (\mathbf{u}_n) =\lambda_1. 
\end{equation*}
On the other hand, by  \eqref{eq:dfiosf8dfhfdsyu}, 
\begin{equation*}
	\lambda_1 \le R_{\gamma} (\mathbf{e}_1). 
\end{equation*}
So, $\lambda_1= R_{\gamma}(\mathbf{e}_1)$. 

(ii) We first show that $(\lambda_1,\mathbf{e}_1)$ is an eigenpair of \eqref{evp1}. 
For any $\mathbf{v}=(v_1,v_2)\in H^1$, define
\begin{equation*}
	F(\varepsilon)=R_\gamma(\mathbf{e}_1+\varepsilon \mathbf{v}).
\end{equation*}
Since $\mathbf{e}_1$ minimizes $R_\gamma$, $F(\varepsilon)$ attains a local minimum at $\varepsilon=0$, and hence  $F'(0)=0$. Setting $N(\varepsilon):=a_\gamma(\mathbf{e}_1+\varepsilon \mathbf{v},\mathbf{e}_1+\varepsilon \mathbf{v})$ and $D(\varepsilon):=(\mathbf{e}_1+\varepsilon \mathbf{v},\mathbf{e}_1+\varepsilon \mathbf{v})_{\gamma}$, we have
\begin{equation*}
	\frac{N'(0)D(0)-N(0)D'(0)}{(D(0))^2}=0.
\end{equation*}
Because $D(0)=(\mathbf{e}_1,\mathbf{e}_1)_\gamma=1$ (recall \eqref{weak-conv}) and $N(0)=a_\gamma(\mathbf{e}_1,\mathbf{e}_1)=\lambda_1$, this simplifies to
$N'(0) =\lambda_1 D'(0)$. Using the symmetry of $a_\gamma$ and $(\cdot,\cdot)_\gamma$, we compute
\begin{equation*}
	N'(0) = 2a_\gamma(\mathbf{e}_1,\mathbf{v}),\quad
D'(0) = 2(\mathbf{e}_1,\mathbf{v})_\gamma.
\end{equation*}
Thus, $a_\gamma(\mathbf{e}_1,\mathbf{v}) = \lambda_1(\mathbf{e}_1,\mathbf{v})_\gamma$. So,  by Definition \ref{deff1}, $(\lambda_1,\mathbf{e}_1)$ is an eigenpair of \eqref{evp1}. 

We then show that $\lambda_1$ is the smallest eigenvalue of \eqref{evp1}. Let $(\lambda,\mathbf{w})$ be any eigenpair of \eqref{evp1}. Then $a_\gamma(\mathbf{w},\mathbf{w}) = \lambda (\mathbf{w},\mathbf{w})_\gamma$, which gives $\lambda = R_\gamma(\mathbf{w}) \ge \lambda_1$ by definition of $\lambda_1$.

(iii) Let $\mathbf{u}=(u_1,u_2)$ be any real-valued eigenfunction associated with $\lambda_1$. Since
$|\mathbf{u}|=(|u_1|,|u_2|)\in H^1$, $|\nabla|u_i||=|\nabla u_i|$ in $\Omega_i$
 ($i=1,\,2$) almost everywhere, and
$(|u_2|-|u_1|)^2\leq (u_2-u_1)^2$ on $\Sigma$ almost everywhere, we have
\begin{equation*}
	\lambda_1\leq R_\gamma(|\mathbf{u}|)\leq R_\gamma(\mathbf{u})=\lambda_1.
\end{equation*}
Thus $|\mathbf{u}|$ is also a minimizer of $R_\gamma$, and the argument in (ii) shows that it is also an eigenfunction associated with $\lambda_1$.
Therefore, by Lemma \ref{lem:regularityofeigenfunction} we have $|u_i|\in C(\overline{\Omega_i})$. 
The Harnack inequality (see, e.g., \cite[Corollary 8.21]{2001EllipticTrudinger} and \cite[Theorem 5.10]{2011EllipticLin}) implies that, for each $i=1,2$, $|u_i|$ is either strictly positive in $\Omega_i$ or identically zero there.


We next show that $|u_i|>0$ throughout $\Omega_i$ for $i=1,2$.
If not, without loss of generality, assume $|u_1|\equiv 0$ in $\Omega_1$. Take $\mathbf{v}=(v_1,0)\in H^1$ with  
$v_1=|u_2|$ on $\Sigma$  in the trace sense (the existence of such $v_1$ is guaranteed  by \cite[Theorem 18.40]{Leoni2009}). Testing the eigen-equation for $|\mathbf{u}|$ with $\mathbf{v}$ yields 
\begin{equation*}
\int_\Sigma |u_2|^2\,\mathrm{d}s=0. 	
\end{equation*}
Now define a new function 
\begin{equation*}
	\tilde{u} =  \begin{cases}
0, & x\in \Omega_1,\\
|u_{2}(x)|, & x\in \Omega_2.
\end{cases}
\end{equation*}
Then $\tilde{u}\in H^1(\Omega)$ and  is a weak solution of
\begin{equation*}
	\begin{cases}
-\Delta \tilde{u}+\tilde{c}(x)\tilde{u}=\lambda_1 \tilde{u}, & x\in \Omega,\\
\partial_\nu \tilde{u}=0, & x\in \partial\Omega,
\end{cases}
\end{equation*}
where 
\begin{equation*}
	\tilde{c} = \begin{cases}
0, & x\in \Omega_1,\\
c_{2}(x), & x\in \Omega_2.
\end{cases}
\end{equation*}
Again, by the De Giorgi regularity and the Harnack inequality, we deduce $\tilde{u}\equiv 0$ in $\Omega$, which is a contradiction. 

Testing the eigen-equation for $\mathbf{u}$ with $\mathbf{v}=(u_1,0)$ gives 
\begin{equation*}
	\frac{1}{\frac{1}{\gamma_1}\int_{\Omega_1}|u_1|^2\,\mathrm{d}x}
\int_\Sigma u_1u_2\,\mathrm{d}s
=R_\gamma(\mathbf{v})-\lambda_1.
\end{equation*} 
Since $(u_1,0)$ cannot be  an  eigenfunction corresponding to $\lambda_1$ by the preceding paragraph, the right-hand side is strictly positive. Therefore
\begin{equation*}
	\int_\Sigma u_1u_2\,\mathrm{d}s>0.
\end{equation*}
Since each $u_i$ is either strictly positive or strictly negative in $\Omega_i$, the preceding integral shows that the two components have the same sign.

Choose the common sign so that $u_i\ge 0$ for $i=1,2$. We now prove that $u_i>0$ on $\overline{\Omega_i}$. 
 Consider the following auxiliary equation for $w_i$:
\begin{equation}\label{eq5.2}
 	\left\{
 	\begin{array}{rcl}
 		-\Delta w_i+\left(c_i(x)-\lambda_1+K\right)w_i &=&K u_i\quad\,\, \text{in}\quad \Omega_i,\\[0.2cm]
 		\displaystyle \frac{\partial w_i}{\partial\nu_i}+\beta_i(x) w_i &=&0 \quad \text{on}\quad \partial\Omega_i,
 	\end{array}
 	\right.
 \end{equation}
 where 
 \begin{equation*}
 	\beta_i(x) =\begin{cases}
 		\gamma_i, & x\in \Sigma,\\
 		0, & x\in\Gamma_i.
 	\end{cases}
 \end{equation*}
 Take $K>0$ large enough that $c_i(x)-\lambda_1+K\geq1$ almost everywhere in $\Omega_i$ for $i=1,2$. Since $\beta_i\geq0$, the bilinear form associated with the homogeneous Robin operator in \eqref{eq5.2} is coercive, and its first eigenvalue is at least $1$. Thus \eqref{eq5.2} admits a unique weak solution $w_i$. Moreover, $Ku_i$ is non-negative and nonzero, and the continuity of $u_i$ on $\overline{\Omega_i}$ implies that $Ku_i\in L^p(\Omega_i)$ for every finite $p$. Applying \cite[Corollary 4.6]{Arendt2020} with spectral parameter $0$, which is smaller than the first eigenvalue of this auxiliary operator, and any $p>N/2$, we obtain
 \begin{align}\label{eq5.3}
w_i>0\quad\text{in }\overline{\Omega_i}.
 \end{align}
 Since $u_i$ is non-negative and satisfies \eqref{eq5.1} with $\lambda=\lambda_1$, the function $z_i=u_i-w_i$ is a weak solution of
 \begin{equation*}
 	\left\{
 	\begin{array}{rcl}
 		-\Delta z_i+\left(c_i(x)-\lambda_1+K\right)z_i &=&0\quad\,\, \text{in}\quad \Omega_i,\\[0.2cm]
 		\displaystyle \frac{\partial z_i}{\partial\nu_i}+\beta_i(x) z_i &=& g_i \quad \text{on}\quad \partial\Omega_i,
 	\end{array}
 	\right.
 \end{equation*}
where 
 \begin{equation*}
 	g_i(x) = \begin{cases}
 		\gamma_i u_j(x) ,& x\in\Sigma,\\
 		0,& x\in \Gamma_i, 
 	\end{cases}
 \end{equation*}
 with $j\in \{1,2\}$, $j\neq i$. 
Then for every $\phi\in H^1(\Omega_i)$, we have
\begin{align*}
\int_{\Omega_i}\left[\nabla z_i \cdot \nabla \phi+\left(c_i(x)-\lambda_1+K\right)z_i \phi\right]\, \mathrm{d}x+\int_{\partial\Omega_i}\beta_i z_i \phi\, \mathrm{d}s=\int_{\Sigma}\gamma_iu_j \phi \,\mathrm{d}s.
\end{align*}
Taking $\phi=z_i^-:=\max\{-z_i,0\}$ as the test function and using $u_j\geq 0$, we obtain 
\begin{align*}
-\int_{\Omega_i}\left[|\nabla z_i^-|^2+\left(c_i(x)-\lambda_1+K\right)|z_i^-|^2\right] \mathrm{d}x-\int_{\partial\Omega_i}\beta_i |z_i^-|^2\, \mathrm{d}s=\int_{\Sigma}\gamma_iu_j z_i^- \,\mathrm{d}s\geq  0.
\end{align*}
Hence $z_i^-=0$, i.e., $u_i\geq w_i$ in $\Omega_i$. So from \eqref{eq5.3}, we have $u_i>0$ in $\overline{\Omega_i}$.
This proves (iii).

%
%



(iv) Suppose that $\mathbf{u}_1$ and $\mathbf{u}_2$ are linearly independent eigenfunctions associated with $\lambda_1$. Set
\[
\tau=\frac{(\mathbf{u}_1,\mathbf{u}_2)_\gamma}{(\mathbf{u}_1,\mathbf{u}_1)_\gamma}.
\]
Then $\mathbf{u}_2-\tau\mathbf{u}_1$ is a nonzero eigenfunction associated with $\lambda_1$ and is orthogonal to $\mathbf{u}_1$ with respect to $(\cdot,\cdot)_\gamma$. This is impossible by (iii).  Thus the eigenspace of $\lambda_1$ is one-dimensional.
\end{proof}

Recall that $\mathbf{u}\perp_\gamma \mathbf{v}$ denotes $(\mathbf{u},\mathbf{v})_{\gamma}=0$. The following lemma gives a variational characterization of the full sequence of eigenvalues of \eqref{evp1}.



\begin{lem}\label{lem2}
For problem \eqref{evp1}, all the eigenvalues are real and, counted with multiplicity,
can be enumerated as
\begin{align*}
\lambda_1<\lambda_2\leq\lambda_3\leq\cdots\leq \lambda_k\leq \cdots,\qquad \lim_{k\rightarrow\infty}\lambda_k=\infty.
\end{align*}
Moreover, there exists a sequence of eigenfunctions $\{\mathbf{e}_k\}_{k=1}^{\infty}$,
with $\mathbf{e}_k$ corresponding to $\lambda_k$ for every $k\geq1$, such that
\[
(\mathbf{e}_j,\mathbf{e}_\ell)_\gamma=\delta_{j\ell}
\qquad\text{for all }j,\ell\geq1,
\]
\[
\lambda_k=R_{\gamma}(\mathbf{e}_k)
=\min\{R_{\gamma}(\mathbf{u}):\;\mathbf{u}\in H^1,\;\mathbf{u}\neq0,\;\mathbf{u}\perp_\gamma E_{k-1}\}
\qquad\text{for every }k\geq1,
\]
where $E_{k-1}:=\operatorname{span}\{\mathbf{e}_1,\ldots,\mathbf{e}_{k-1}\}$ for $k\geq2$, and $E_0:=\{0\}$.
The sequence $\{\mathbf{e}_k\}_{k=1}^{\infty}$ forms a basis for both $L^2$ and $H^1$.
\end{lem}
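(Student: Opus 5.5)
The plan is to construct the eigenpairs inductively by constrained minimization, exactly as in Lemma \ref{lem1}, and then prove completeness by a spectral argument for a compact self-adjoint solution operator. Set $\mathbf{e}_1$ as in Lemma \ref{lem1}. Suppose $\mathbf{e}_1,\ldots,\mathbf{e}_{k-1}$ have been found, $\gamma$-orthonormal eigenfunctions with eigenvalues $\lambda_1\le\cdots\le\lambda_{k-1}$. Define
\[
\lambda_k=\inf\{R_\gamma(\mathbf{u}):\ \mathbf{u}\in H^1,\ \mathbf{u}\neq0,\ \mathbf{u}\perp_\gamma E_{k-1}\}.
\]
The admissible set is a closed subspace of $H^1$ of finite codimension, so it is nonzero, and $\lambda_k\ge\lambda_{k-1}$ because the constraint set shrinks. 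Take a minimizing sequence normalized by $(\mathbf{u}_n,\mathbf{u}_n)_\gamma=1$. By the bound $\lambda_k\ge-\|\mathbf{c}\|_{L^\infty}$ it is bounded in $H^1$. Weak $H^1$ and strong $L^2$ compactness, together with the weak lower semicontinuity of $\|\cdot\|_*$ from \eqref{eq22}, give a minimizer $\mathbf{e}_k$ with $(\mathbf{e}_k,\mathbf{e}_k)_\gamma=1$. The orthogonality constraint passes to the limit by the strong $L^2$ convergence.

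Next I show that $\mathbf{e}_k$ is an eigenfunction. The first-variation computation of Lemma \ref{lem1}(ii) gives $a_\gamma(\mathbf{e}_k,\mathbf{v})=\lambda_k(\mathbf{e}_k,\mathbf{v})_\gamma$ for all $\mathbf{v}\perp_\gamma E_{k-1}$. For $\mathbf{v}=\mathbf{e}_j$ with $j<k$, symmetry and the eigen-equation for $\mathbf{e}_j$ give $a_\gamma(\mathbf{e}_k,\mathbf{e}_j)=\lambda_j(\mathbf{e}_k,\mathbf{e}_j)_\gamma=0$, and both sides of the identity vanish. Any $\mathbf{v}\in H^1$ splits as a component in $E_{k-1}$ plus a component $\gamma$-orthogonal to it, so the identity holds for all $\mathbf{v}\in H^1$. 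The strict inequality $\lambda_1<\lambda_2$ follows from Lemma \ref{lem1}(iv): equality would make $\mathbf{e}_2$ a second, independent eigenfunction for $\lambda_1$.

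The main obstacle is showing $\lambda_k\to\infty$ and that the list exhausts all eigenvalues with multiplicity.
\begin{itemize}
\item[(a)] \emph{Divergence.} Suppose $\lambda_k$ were bounded. Then $\{\mathbf{e}_k\}$ is bounded in $H^1$, so compactness of $H^1\hookrightarrow L^2$ yields an $L^2$-Cauchy subsequence. This contradicts $\|\mathbf{e}_j-\mathbf{e}_\ell\|_\gamma^2=2$ for $j\ne\ell$.
\item[(b)] \emph{Completeness in $L^2$.} Let $\mathbf{w}\in H^1$ be $\gamma$-orthogonal to every $\mathbf{e}_k$. Then $R_\gamma(\mathbf{w})\ge\lambda_k$ for every $k$, so $\mathbf{w}=0$. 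Hence the $\gamma$-orthogonal complement of $\operatorname{span}\{\mathbf{e}_k\}$ meets $H^1$ trivially. Since $H^1$ is dense in $L^2$ and $(\cdot,\cdot)_\gamma$ is equivalent to the standard inner product, the sequence is a complete orthonormal basis of $(L^2,(\cdot,\cdot)_\gamma)$.
\item[(c)] \emph{Basis of $H^1$.} The eigen-equations give $(\mathbf{e}_j,\mathbf{e}_\ell)_*=(\lambda_j+\|\mathbf{c}\|_{L^\infty}+1)\delta_{j\ell}$, so the sequence is $*$-orthogonal. If $\mathbf{w}\in H^1$ is $*$-orthogonal to every $\mathbf{e}_k$, then $(\lambda_k+\|\mathbf{c}\|_{L^\infty}+1)(\mathbf{w},\mathbf{e}_k)_\gamma=0$ for every $k$. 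The factor is positive, so $(\mathbf{w},\mathbf{e}_k)_\gamma=0$ for all $k$, and (b) gives $\mathbf{w}=0$. Thus the normalized sequence is an orthonormal basis of $(H^1,\|\cdot\|_*)$.
\item[(d)] \emph{Exhaustion.} Let $(\lambda,\mathbf{u})$ be any real eigenpair. For each $k$, symmetry gives $(\lambda-\lambda_k)(\mathbf{u},\mathbf{e}_k)_\gamma=0$. By (b), $\mathbf{u}$ is not $\gamma$-orthogonal to all $\mathbf{e}_k$, so $\lambda=\lambda_k$ for some $k$. Moreover $\mathbf{u}$ lies in the span of the finitely many $\mathbf{e}_k$ with $\lambda_k=\lambda$, finitely many by (a).
\end{itemize}
Together these show that the enumeration counts every eigenvalue with exact multiplicity. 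Reality of the eigenvalues was already established after Definition \ref{deff1}.
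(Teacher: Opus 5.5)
Your construction of the eigenpairs, the first-variation argument, the proof of $\lambda_1<\lambda_2$, and the divergence argument (a) coincide with the paper's Steps 1--3. The genuine gap is the last sentence of (b). You know only that the $\gamma$-orthogonal complement of $M$, the $L^2$-closure of $\operatorname{span}\{\mathbf{e}_k\}$, meets the dense subspace $H^1$ only in $\{0\}$. That does not force the complement to be $\{0\}$. For any $\mathbf{f}\in L^2\setminus H^1$, the closed hyperplane $\{\mathbf{g}\in L^2:(\mathbf{g},\mathbf{f})_\gamma=0\}$ has orthogonal complement $\operatorname{span}\{\mathbf{f}\}$. This meets $H^1$ trivially, yet the hyperplane is not all of $L^2$. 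Density of $H^1$ helps only once you know $H^1\subset M$, i.e.\ that every $\mathbf{v}\in H^1$ is an $L^2$-limit of finite combinations of the $\mathbf{e}_k$. Nothing in (b) gives you that.

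The missing ingredient is already in your (c), which uses only the valid first half of (b): an $H^1$ function $\gamma$-orthogonal to all $\mathbf{e}_k$ vanishes. So reverse the order, as the paper does in Steps 4--5. First show that $\{\mathbf{e}_k\}$ is a $*$-orthogonal basis of $H^1$. Then every $\mathbf{v}\in H^1$ is a $\|\cdot\|_*$-limit, hence an $L^2$-limit, of finite combinations of the $\mathbf{e}_k$. Consequently any $\mathbf{u}\in L^2$ with $\mathbf{u}\perp_\gamma\mathbf{e}_k$ for all $k$ satisfies $(\mathbf{u},\mathbf{v})_\gamma=0$ for all $\mathbf{v}\in H^1$, hence for all $\mathbf{v}\in L^2$ by density, so $\mathbf{u}=0$.

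Part (d) is not affected. Eigenfunctions lie in $H^1$, so only the valid half of (b) is used there, applied to $\mathbf{u}-\sum_{\lambda_k=\lambda}(\mathbf{u},\mathbf{e}_k)_\gamma\mathbf{e}_k$. Deriving exact multiplicities from (d), as you do, is a slightly cleaner route than the paper's Step 2 argument. Note also that the compact self-adjoint solution operator announced in your first sentence never appears. Your argument is purely variational, like the paper's.
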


\begin{proof}[{\bf Proof.}]

\smallskip
\noindent {\bf Step 1. Construction of the eigenpairs.}

Lemma \ref{lem1} provides a normalized first eigenpair
$(\lambda_1,\mathbf{e}_1)$. Suppose, for some $k\geq2$, that the normalized
eigenpairs $(\lambda_j,\mathbf{e}_j)$ have been constructed for
$1\leq j\leq k-1$, with
$(\mathbf{e}_j,\mathbf{e}_\ell)_\gamma=\delta_{j\ell}$. Define
\begin{equation*}
	E_{k-1}:=\operatorname{span}\{\mathbf{e}_1,\ldots,\mathbf{e}_{k-1}\},
\end{equation*}
\begin{equation*}
	X^k:= \{\mathbf{u}\in H^1:\, \mathbf{u} \perp_\gamma E_{k-1}\}, 
\end{equation*}
and set
\begin{equation*}
	\lambda_k:=\inf\{R_\gamma(\mathbf{u}):\;\mathbf{u}\in X^k,\;\mathbf{u}\neq0\}.
\end{equation*}
Arguing as in the proof of Lemma \ref{lem1}, we conclude that 
there exists a minimizer   $\mathbf{e}_k\in X^k$ such that 
\begin{equation*}
	(\mathbf{e}_k,\mathbf{e}_k)_\gamma=1,
\qquad \lambda_k=R_\gamma(\mathbf{e}_k),
\end{equation*}
and that 
\begin{equation*}
	a_\gamma(\mathbf{e}_k,\mathbf{v})
=\lambda_k(\mathbf{e}_k,\mathbf{v})_\gamma
\qquad\text{for all }\mathbf{v}\in X^k.
\end{equation*}
For arbitrary $\mathbf{v}\in H^1$, write 
\begin{equation*}
	\mathbf{v}=\sum_{j=1}^{k-1}(\mathbf{v},\mathbf{e}_j)_\gamma\mathbf{e}_j
+\mathbf{v}_\perp,
\qquad \mathbf{v}_\perp\in X^k.
\end{equation*}
Since each $\mathbf{e}_j$ is an eigenfunction and
$(\mathbf{e}_j,\mathbf{e}_k)_\gamma=0$ for $j<k$, symmetry of
$a_\gamma$ gives
\[
a_\gamma(\mathbf{e}_k,\mathbf{e}_j)
=a_\gamma(\mathbf{e}_j,\mathbf{e}_k)
=\lambda_j(\mathbf{e}_j,\mathbf{e}_k)_\gamma=0.
\]
Consequently,
\[
a_\gamma(\mathbf{e}_k,\mathbf{v})
=a_\gamma(\mathbf{e}_k,\mathbf{v}_\perp)
=\lambda_k(\mathbf{e}_k,\mathbf{v}_\perp)_\gamma
=\lambda_k(\mathbf{e}_k,\mathbf{v})_\gamma.
\]
Thus $(\lambda_k,\mathbf{e}_k)$ is an eigenpair of \eqref{evp1}. By
induction, this constructs the required sequence of normalized,
$\gamma$-orthogonal eigenfunctions and proves its variational
characterization.

\smallskip
\noindent {\bf Step 2. Ordering and multiplicities.}

Since $X^{k+1}\subset X^k$, the variational characterization yields
\[
\lambda_1\leq\lambda_2\leq\cdots\leq\lambda_k\leq\cdots.
\]
Moreover, Lemma \ref{lem1} gives
$E_{\lambda_1}=\operatorname{span}\{\mathbf{e}_1\}$. If
$\lambda_1=\lambda_2$, then $\mathbf{e}_2\in E_{\lambda_1}$, which
contradicts $(\mathbf{e}_1,\mathbf{e}_2)_\gamma=0$. Hence
$\lambda_1<\lambda_2$.


Suppose that an eigenvalue $\lambda$ first appears as $\lambda_\ell$ and
that its eigenspace has dimension $m$. Eigenfunctions corresponding to distinct eigenvalues are
$\gamma$-orthogonal by the symmetry of $a_\gamma$. Hence, whenever fewer than
$m$ $\gamma$-orthonormal eigenfunctions associated with $\lambda$ have been
selected, one can choose an eigenfunction associated with $\lambda$ that is
$\gamma$-orthogonal to all the previously selected eigenfunctions. It is an
admissible competitor in the next variational problem, so the next
eigenvalue is again $\lambda$. Conversely, no more than $m$
$\gamma$-orthonormal eigenfunctions can belong to this eigenspace. Thus, once
an eigenvalue appears in the sequence, it appears precisely according to the
dimension of its eigenspace.

\smallskip
\noindent {\bf Step 3. Divergence of the eigenvalues.}

Suppose that $\{\lambda_k\}_{k=1}^\infty$ is bounded. Since
$(\mathbf{e}_k,\mathbf{e}_k)_\gamma=1$, we have
\begin{equation*}
\sum_{i=1}^{2}\frac{1}{\gamma_i}\int_{\Omega_i}|\nabla e_{ki}|^2\,\mathrm{d}x
\leq \lambda_k+ \|\mathbf{c}\|_{L^\infty}.
\end{equation*}
Thus $\{\mathbf{e}_k\}_{k=1}^\infty$ is bounded in $H^1$, and the compact
embedding $H^1\hookrightarrow L^2$ yields a subsequence converging strongly
in $L^2$. This is impossible because, for $j\neq\ell$,
\[
\|\mathbf{e}_j-\mathbf{e}_\ell\|_\gamma^2=2.
\]
Hence $\lim\limits_{k\to\infty}\lambda_k=\infty$.

\smallskip
\noindent {\bf Step 4. Completeness in $H^1$.}

For every $\mathbf{u}\in H^1$ and each $k\geq1$,
\begin{equation*}
	(\mathbf{u},\mathbf{e}_k)_*
=(\lambda_k+\|\mathbf{c}\|_{L^\infty}+1)(\mathbf{u},\mathbf{e}_k)_\gamma,
\end{equation*}
where $(\cdot,\cdot)_*$ is the inner product introduced in \eqref{eq22}. 
Using \eqref{eq14}, we obtain  
\begin{equation}
	\lambda_k+\|\mathbf{c}\|_{L^\infty}+1\geq\lambda_1+\|\mathbf{c}\|_{L^\infty}+1>0,
\end{equation}
and hence the sequence
$\{\mathbf{e}_k\}_{k=1}^\infty$ is orthogonal with respect to
$(\cdot,\cdot)_*$. Let $\mathbf{v}\in H^1$ be orthogonal to every
$\mathbf{e}_k$ with respect to $(\cdot,\cdot)_*$. Then
$(\mathbf{v},\mathbf{e}_k)_\gamma=0$ for every $k$, and hence
$\mathbf{v}\in X^k$ for all $k$. If $\mathbf{v}\neq0$, then the variational
characterization gives
\begin{equation*}
	R_\gamma(\mathbf{v})\geq\lambda_k\qquad\text{for all }k \geq 1.
\end{equation*}
This contradicts $\lambda_k\to\infty$, since $R_{\gamma}(\mathbf{v})$ is finite.
   Therefore, the orthogonal set $\{\mathbf{e}_k\}_{k=1}^\infty$ is complete in $H^1$ under $(\cdot,\cdot)_*$, and thus forms an orthogonal basis for $H^1$ with respect to $(\cdot,\cdot)_*$.


\smallskip
\noindent {\bf Step 5. Completeness in $L^2$.}

Let $\mathbf{u}\in L^2$ satisfy $\mathbf{u}\perp_{\gamma}\mathbf{e}_k$ for all $k\ge 1$.  Since  $\{\mathbf{e}_k\}_{k=1}^\infty$ is complete in $H^1$ and $H^1$ is dense in $L^2$, $(\mathbf{u},\mathbf{v})_{\gamma}=0$ for all $\mathbf{v}\in L^2$. Hence $\mathbf{u}=0$. This implies that  $\{\mathbf{e}_k\}_{k=1}^\infty$  is   complete in $L^2$.



\smallskip
\noindent {\bf Step 6. Exhaustion of the  eigenvalues.}

Let $\lambda$ be an arbitrary eigenvalue of \eqref{evp1}, and let
$\mathbf{u}_\lambda\neq0$ be an  eigenfunction associated with
$\lambda$. For every $k\geq1$, the symmetry of $a_{\gamma}$ gives 
\[
\lambda(\mathbf{u}_\lambda,\mathbf{e}_k)_\gamma
=a_\gamma(\mathbf{u}_\lambda,\mathbf{e}_k)
=a_\gamma(\mathbf{e}_k,\mathbf{u}_\lambda)
=\lambda_k(\mathbf{u}_\lambda,\mathbf{e}_k)_\gamma.
\]
If $\lambda\neq\lambda_k$ for every $k$, then
$(\mathbf{u}_\lambda,\mathbf{e}_k)_\gamma=0$ for all $k$, contradicting
the completeness in $L^2$. Hence every eigenvalue is one of the
$\lambda_k$, while Step 1 shows that every $\lambda_k$ is an eigenvalue.
Together with Step 2, this proves that the sequence enumerates all
eigenvalues counted with multiplicity. This completes the proof.
\end{proof}

\begin{lem}[Courant type]\label{lem3} 
Let $\{\lambda_k\}_{k=1}^\infty$ be the eigenvalues of \eqref{evp1} given by 
Lemma \ref{lem2}.  
Then, for every $k\ge 1$, 
\begin{equation*}
	\lambda_k=\max_{V_{k-1}\subset L^2} \min\{R_\gamma(\mathbf{u}):\;\mathbf{u}\in H^1,\;\mathbf{u}\neq 0,\;\mathbf{u}\perp_\gamma V_{k-1}\}.
\end{equation*}
\end{lem}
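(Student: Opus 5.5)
The plan is the standard two-sided argument, with Lemma \ref{lem2} supplying everything needed. For a $(k-1)$-dimensional subspace $V_{k-1}\subset L^2$, write
\[
m(V_{k-1}):=\inf\{R_\gamma(\mathbf{u}):\;\mathbf{u}\in H^1,\;\mathbf{u}\neq0,\;\mathbf{u}\perp_\gamma V_{k-1}\}.
\]
It then suffices to prove two things. First, $m(E_{k-1})=\lambda_k$, with the infimum attained. Second, $m(V_{k-1})\le\lambda_k$ for every $V_{k-1}$. For $k=1$ the only choice is $V_0=\{0\}$, and the statement is just Lemma \ref{lem1}, so assume $k\ge2$.

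\emph{Attainment.} Choose $V_{k-1}=E_{k-1}=\operatorname{span}\{\mathbf{e}_1,\ldots,\mathbf{e}_{k-1}\}$. This subspace lies in $H^1\subset L^2$ and has dimension $k-1$, by $\gamma$-orthonormality. Lemma \ref{lem2} then gives directly that the minimum of $R_\gamma$ over nonzero $\mathbf{u}\in H^1$ with $\mathbf{u}\perp_\gamma E_{k-1}$ equals $\lambda_k$, and that it is attained at $\mathbf{e}_k$. Hence the supremum over subspaces is at least $\lambda_k$, and once the upper bound is proved it is a maximum.

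\emph{Upper bound.} Let $V_{k-1}\subset L^2$ be arbitrary, with basis $\mathbf{w}_1,\ldots,\mathbf{w}_{k-1}$. Seek a nonzero
\[
\mathbf{u}=\sum_{j=1}^k c_j\mathbf{e}_j\in E_k\subset H^1
\]
satisfying $(\mathbf{u},\mathbf{w}_i)_\gamma=0$ for $i=1,\ldots,k-1$. This is a homogeneous linear system of $k-1$ equations in $k$ unknowns, so a nontrivial solution $(c_1,\ldots,c_k)$ exists. Since the $\mathbf{e}_j$ are linearly independent, $\mathbf{u}\neq0$. The eigen-relation $a_\gamma(\mathbf{e}_j,\mathbf{v})=\lambda_j(\mathbf{e}_j,\mathbf{v})_\gamma$ together with $\gamma$-orthonormality gives
\[
a_\gamma(\mathbf{u},\mathbf{u})=\sum_{j=1}^k\lambda_jc_j^2,\qquad (\mathbf{u},\mathbf{u})_\gamma=\sum_{j=1}^k c_j^2 .
\]
By the monotonicity $\lambda_1\le\cdots\le\lambda_k$ this yields $R_\gamma(\mathbf{u})\le\lambda_k$, and therefore $m(V_{k-1})\le\lambda_k$.

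Combining the two parts shows the supremum equals $\lambda_k$ and is attained at $E_{k-1}$, which is the statement. There is no real obstacle here. The only points needing care are that the inner "min" should be read as an infimum for a general $V_{k-1}$ (the bound above does not need it to be attained), and that the test function $\mathbf{u}$ must lie in $H^1$; the latter holds because the $\mathbf{e}_j$ are in $H^1$, even though $V_{k-1}$ is only assumed to lie in $L^2$.
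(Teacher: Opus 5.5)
Your argument is correct and is essentially the paper's proof: the lower bound comes from taking $V_{k-1}=E_{k-1}$ in Lemma \ref{lem2}, and the upper bound comes from a nonzero $\mathbf{w}=\sum_{j=1}^k c_j\mathbf{e}_j$ that is $\gamma$-orthogonal to an arbitrary $V_{k-1}$, which gives $R_\gamma(\mathbf{w})\le\lambda_k$. The paper does slightly more by also asserting that the inner infimum is attained for every $V_{k-1}\subset L^2$, and that is what justifies the ``$\min$'' in the statement, so instead of reading it as $\inf$ you should add this one-line step. It follows from the direct method of Lemma \ref{lem1}: the constraints $(\mathbf{u},\mathbf{v})_\gamma=0$, $\mathbf{v}\in V_{k-1}$, are preserved under strong $L^2$ limits of a normalized minimizing sequence bounded in $H^1$.
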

\begin{proof}[{\bf Proof.}]
	Define \begin{equation}\label{eq44}
		\hat{\lambda}_k=\sup_{V_{k-1}\subset L^2} \inf\, \{R_\gamma(\mathbf{u}):\;\mathbf{u}\in H^1,\;\mathbf{u}\neq 0,\;\mathbf{u}\perp_\gamma V_{k-1}\}.
	\end{equation}
On the one hand,  by Lemma \ref{lem2},
\begin{equation}\label{eq1.18}
	\lambda_k= \min\{R_\gamma(\mathbf{u}):\;\mathbf{u}\in H^1,\;\mathbf{u}\neq 0,\;\mathbf{u}\perp_\gamma \operatorname{span}\{\mathbf{e}_1,\mathbf{e}_2,\cdots,\mathbf{e}_{k-1}\}\}\leq \hat{\lambda}_k\,.
\end{equation}
On the other hand, take an arbitrary subspace $V_{k-1} \subset L^2$. Since
\begin{equation*}
	\dim (\operatorname{span}\{\mathbf{e}_1,\mathbf{e}_2,\cdots, \mathbf{e}_k\})=k>\dim V_{k-1},
\end{equation*}
there exists a nonzero function $\mathbf{w}=c_1\mathbf{e}_1+c_2\mathbf{e}_2+\cdots + c_k\mathbf{e}_k$ such that $\mathbf{w}\perp_\gamma V_{k-1}$. 
Using Lemma \ref{lem2}, we obtain 
\begin{equation*}
	\inf\,\{R_\gamma(\mathbf{u}):\mathbf{u}\in H^1,\mathbf{u}\neq 0,\mathbf{u}\perp_\gamma V_{k-1}\}\leq R_\gamma(\mathbf{w})=\frac{a_\gamma(\mathbf{w},\mathbf{w})}{(\mathbf{w},\mathbf{w})_{\gamma}}=\frac{\sum_{j=1}^k c_j^2\lambda_j(\mathbf{e}_j,\mathbf{e}_j)_{\gamma}}{\sum_{j=1}^k c_j^2(\mathbf{e}_j,\mathbf{e}_j)_{\gamma}}\leq \lambda_k.
\end{equation*}
Hence, by the arbitrariness of $V_{k-1}$, $\hat{\lambda}_k\le \lambda_k$. Combining this with \eqref{eq1.18} gives $\hat{\lambda}_k=\lambda_k$.

For every $V_{k-1}\subset L^2$, 
the infimum in  \eqref{eq44} is attained by the  argument used in Lemma \ref{lem2}.
Moreover, the supremum in \eqref{eq44} is attained at $V_{k-1}=\operatorname{span}\{\mathbf{e}_1,\mathbf{e}_2,\cdots, \mathbf{e}_{k-1}\}$. Therefore, the supremum and infimum in \eqref{eq44} may be replaced by
a maximum and a minimum, respectively.
This completes the proof.  
\end{proof}

\begin{lem}[Poincar\'{e} type]\label{lem4}
Let $\{\lambda_k\}_{k=1}^\infty$ be the eigenvalues of \eqref{evp1} given by 
Lemma \ref{lem2}.  
Then, for every $k\ge 1$,
\begin{align*}
\lambda_k=\min_{V_k\subset H^1} \max\{R_\gamma(\mathbf{u}):\;\mathbf{u}\in V_k,\;\mathbf{u}\neq 0\}.
\end{align*}
\end{lem}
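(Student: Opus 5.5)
The plan is a two-sided inequality, with Lemma~\ref{lem2} supplying the eigenbasis and its orthogonality relations. Define
\[
\tilde\lambda_k=\inf_{V_k\subset H^1}\sup\{R_\gamma(\mathbf{u}):\mathbf{u}\in V_k,\ \mathbf{u}\neq0\}.
\]
We will show $\tilde\lambda_k=\lambda_k$ and that the infimum and supremum are attained.

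\textbf{Upper bound.} Take $V_k=E_k=\operatorname{span}\{\mathbf{e}_1,\ldots,\mathbf{e}_k\}\subset H^1$. For a nonzero $\mathbf{w}=\sum_{j=1}^k c_j\mathbf{e}_j$ we have $a_\gamma(\mathbf{e}_j,\mathbf{e}_\ell)=\lambda_j\delta_{j\ell}$ and $(\mathbf{e}_j,\mathbf{e}_\ell)_\gamma=\delta_{j\ell}$. These follow from the eigen-equation and the $\gamma$-orthonormality in Lemma~\ref{lem2}. Hence
\[
R_\gamma(\mathbf{w})=\frac{\sum_{j=1}^k c_j^2\lambda_j}{\sum_{j=1}^k c_j^2}\le\lambda_k,
\]
with equality at $\mathbf{w}=\mathbf{e}_k$. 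So the supremum over $E_k$ is a maximum equal to $\lambda_k$, and $\tilde\lambda_k\le\lambda_k$.

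\textbf{Lower bound.} Let $V_k\subset H^1$ be an arbitrary $k$-dimensional subspace. The linear map $V_k\to\mathbb{R}^{k-1}$ given by $\mathbf{u}\mapsto\big((\mathbf{u},\mathbf{e}_j)_\gamma\big)_{j=1}^{k-1}$ has a nontrivial kernel by dimension counting. So there is a nonzero $\mathbf{u}\in V_k$ with $\mathbf{u}\perp_\gamma E_{k-1}$. By the variational characterization in Lemma~\ref{lem2}, $R_\gamma(\mathbf{u})\ge\lambda_k$. Thus $\sup_{V_k}R_\gamma\ge\lambda_k$ for every $V_k$, and therefore $\tilde\lambda_k\ge\lambda_k$.

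\textbf{Attainment.} On each finite-dimensional $V_k$, $R_\gamma$ is continuous and homogeneous of degree zero. Its supremum is therefore attained on the compact unit sphere of $V_k$, taken in any norm. The outer infimum is attained at $V_k=E_k$ by the upper-bound step. Hence the min and max in the statement are justified.

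There is no real obstacle in this argument. The only point requiring care is the dimension-counting step, where orthogonality is taken with respect to $(\cdot,\cdot)_\gamma$ rather than the standard inner product. This works because that step uses only linearity of $\mathbf{u}\mapsto(\mathbf{u},\mathbf{e}_j)_\gamma$. The diagonalization in the upper bound likewise relies on $a_\gamma$ being symmetric, which is exactly why the weighted framework was introduced.
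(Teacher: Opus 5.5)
Your proposal is correct and follows essentially the same route as the paper's proof. The paper also takes $V_k=\operatorname{span}\{\mathbf{e}_1,\ldots,\mathbf{e}_k\}$ for the upper bound, uses dimension counting to find a $\gamma$-orthogonal vector for the lower bound, and justifies the min and max by compactness of the unit sphere in $V_k$ together with attainment at that span.
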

\begin{proof}[{\bf Proof.}]
Define
\begin{equation}\label{eq45}
\tilde{\lambda}_k=\inf_{V_k\subset H^1} \sup\{R_\gamma(\mathbf{u}):\;\mathbf{u}\in V_k,\;\mathbf{u}\neq 0\},
\end{equation}
and set $U_k=\operatorname{span}\{\mathbf{e}_1,\mathbf{e}_2,\cdots,\mathbf{e}_k\}$ for $k\geq 1$, with $U_0=\{0\}$.

On the one hand, let $V_k\subset H^1$ be arbitrary with $k\geq 1$. Since $\dim V_k>\dim U_{k-1}$,
there exists a nonzero function $\mathbf{v}\in V_k$ such that $\mathbf{v}\perp_\gamma U_{k-1}$. By Lemma \ref{lem2},
\begin{align*}
\sup\{R_\gamma(\mathbf{u}):\mathbf{u}\in V_k,\mathbf{u}\neq 0\}\geq R_\gamma(\mathbf{v})\geq \min \{R_\gamma(\mathbf{u}):\mathbf{u}\in H^1,\mathbf{u}\neq 0,\mathbf{u}\perp_\gamma U_{k-1}\}=\lambda_k.
\end{align*}
Since $V_k$ is arbitrary, it follows that $\tilde{\lambda}_k\geq \lambda_k$.

On the other hand, let $V_k=U_k$. For any nonzero function  $\mathbf{u}=c_1\mathbf{e}_1+c_2\mathbf{e}_2+\cdots+c_k\mathbf{e}_k\in U_k$, the ordering $\lambda_1<\lambda_2\leq\cdots \leq\lambda_k$ from Lemma \ref{lem2} gives
\begin{align*}
R_\gamma(\mathbf{u})=\frac{a_\gamma(\mathbf{u},\mathbf{u})}{(\mathbf{u},\mathbf{u})_{\gamma}}=\frac{\sum_{j=1}^k c_j^2\lambda_j(\mathbf{e}_j,\mathbf{e}_j)_{\gamma}}{\sum_{j=1}^k c_j^2(\mathbf{e}_j,\mathbf{e}_j)_{\gamma}}\leq \lambda_k\,.
\end{align*}
Hence, from \eqref{eq45},
\begin{align*}
\tilde{\lambda}_k\leq\sup\{R_\gamma(\mathbf{u}):\mathbf{u}\in U_k,\mathbf{u}\neq 0\}\leq \lambda_k\,.
\end{align*}
Combining the two inequalities yields $\lambda_k=\tilde{\lambda}_k$.

For every  $V_k \subset H^1$, the function
$R_\gamma$ is continuous on the compact set
\[
\{\mathbf u\in V_k:\, (\mathbf u,\mathbf u)_\gamma=1\}.
\]
Hence the supremum in \eqref{eq45} is attained. Moreover,  the infimum  in \eqref{eq45} is attained at $U_k$.  Therefore, the   infimum and  supremum in \eqref{eq45} may be replaced by
 a minimum and a maximum, respectively. This completes the proof.
\end{proof}


We now address the eigenvalue problem \eqref{evp2}. The proof of Theorem \ref{thm2} proceeds analogously to that of Theorem \ref{thm1}. We therefore give only a sketch, organized into a corresponding sequence of lemmas.



\begin{lem}\label{lem:regularityofeigenfunction2}
Let $\hat{\mathbf{u}}=(\hat u_1,\hat u_2)\in H^1_\Sigma\, (= H^1(\Omega))$ be an
eigenfunction of \eqref{evp2} associated with an eigenvalue $\mu$. Then  
\begin{equation*}
	\hat{\mathbf{u}}\in C(\overline{\Omega}).
\end{equation*}
\end{lem}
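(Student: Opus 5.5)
The plan is to view an eigenfunction of \eqref{evp2} as a weak solution on all of $\Omega$ of a single divergence-form equation with piecewise constant coefficient. Set
\[
A(x)=\begin{cases}1,& x\in\Omega_1,\\ \alpha,& x\in\Omega_2,\end{cases}\qquad
\tilde c(x)=\begin{cases}c_1(x)-\mu,& x\in\Omega_1,\\ \alpha\,(c_2(x)-\mu),& x\in\Omega_2.\end{cases}
\]
Since $H^1_\Sigma$ is identified with $H^1(\Omega)$, the weak formulation \eqref{eq19} with real test functions reads
\[
\int_\Omega \bigl[A\nabla\hat{\mathbf u}\cdot\nabla v+\tilde c\,\hat{\mathbf u}\,v\bigr]\,\mathrm{d}x=0\qquad\forall\, v\in H^1(\Omega).
\]
This is exactly the weak form of $-\operatorname{div}(A\nabla w)+\tilde c\,w=0$ in $\Omega$ with conormal condition $A\nabla w\cdot\nu=0$ on $\partial\Omega$. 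The coefficient $A$ is bounded, measurable and uniformly elliptic, since $\min\{1,\alpha\}\le A\le\max\{1,\alpha\}$, and $\tilde c\in L^\infty(\Omega)$. The interface condition $\partial_\nu\varphi_1=\alpha\partial_\nu\varphi_2$ is therefore encoded in the equation itself, and no separate treatment of $\Sigma$ is needed.

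For $N=1$, the embedding $H^1(\Omega)\hookrightarrow C(\overline\Omega)$ gives the claim at once. For $N\ge2$, I would invoke the same source as Proposition \ref{prop:regularityforrobin}, namely \cite[Theorem 3.14]{Nittka2011Regularity}. That result is stated for general measurable uniformly elliptic coefficients $a_{ij}$ on bounded Lipschitz domains. I would apply it on $D=\Omega$ with $a_{ij}=A\delta_{ij}$, $b_j=c_j=0$, $d=\tilde c$, $\beta=0$, $f_0=0$, $f_j=0$ and $g=0$. All the data then lie trivially in the required spaces $L^{N/2+\varepsilon}(\Omega)$ and $L^{N-1+\varepsilon}(\partial\Omega)$. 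This yields $\hat{\mathbf u}\in C^\theta(\overline\Omega)$ for some $\theta\in(0,1)$, hence $\hat{\mathbf u}\in C(\overline\Omega)$.

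Unlike Lemma \ref{lem:regularityofeigenfunction}, this needs no bootstrap of trace integrability. The eigenfunction is a single $H^1(\Omega)$ function, and the right-hand sides vanish identically.

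The only step needing care is checking that Nittka's theorem really allows discontinuous (merely $L^\infty$, elliptic) principal coefficients, rather than the Laplacian used in Proposition \ref{prop:regularityforrobin}. I expect it does, since its proof is of De Giorgi type. If in doubt, a fallback is available. Interior Hölder continuity follows from the De Giorgi--Nash--Moser theorem for divergence-form operators with bounded measurable coefficients \cite[Theorem 8.24]{2001EllipticTrudinger}. Continuity up to the Lipschitz boundary $\partial\Omega$ under the homogeneous conormal condition can then be obtained by the standard De Giorgi estimates near the boundary, which is exactly what the cited theorem packages.
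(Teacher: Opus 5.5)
Your proposal is correct and matches the paper's proof: the paper uses the same piecewise-constant coefficient ($1$ on $\Omega_1$, $\alpha$ on $\Omega_2$) and the same modified potential, rewrites the eigen-equation as a single divergence-form problem on $\Omega$ with homogeneous conormal condition, applies \cite[Theorem 3.14(ii)]{Nittka2011Regularity} for $N\ge2$, and uses the Sobolev embedding for $N=1$. Your one concern does not arise, since that theorem is stated for bounded measurable, strictly elliptic principal coefficients on Lipschitz domains, so the fallback argument is not needed.
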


\begin{proof}[{\bf Proof.}]
If $N=1$, the conclusion follows from the Sobolev embedding theorem. 
Assume
in what follows that $N\geq2$.   
Set
\[
a(x)=
\begin{cases}
1, & x\in\Omega_1,\\
\alpha, & x\in\Omega_2,
\end{cases}
\qquad
\hat{c}(x)=
\begin{cases}
c_1(x)-\mu, & x\in\Omega_1,\\
\alpha(c_2(x)-\mu), & x\in\Omega_2.
\end{cases}
\]
By the weak formulation of \eqref{evp2}, $\hat{\mathbf{u}}\in H^1(\Omega)$ satisfies
\begin{equation}
	\int_\Omega \bigl(a\nabla\hat {\mathbf{u}}\cdot\nabla v
+\hat{c}\hat{\mathbf{u}} v\bigr)\,\mathrm{d}x=0
\qquad\text{for all }v\in H^1(\Omega).
\end{equation}
Equivalently, $\hat{\mathbf{u}}$ is a weak solution of 
\begin{equation}\label{eq:sdjfoidfugfhoiuyhgfklioy55ttgfdg}
	\begin{cases}
-\displaystyle\sum_{i=1}^N(a(x)u_{x_i})_{x_i}+\hat c(x)u=0,
& x\in\Omega,\\
a(x)\partial_\nu u=0, & x\in\partial\Omega.
\end{cases}
\end{equation}
Since $a,\hat{c} \in L^\infty(\Omega) $ , 
Theorem 3.14 (ii) in \cite{Nittka2011Regularity} yields 
$\hat{\mathbf{u}} \in C^{\theta}(\overline{\Omega})$ for some
$\theta\in(0,1)$. Hence $\hat{\mathbf{u}}\in C(\overline{\Omega})$.
\end{proof}

\begin{lem}\label{lem5}
Let 
\begin{equation*}
	\mu_1=\inf_{\mathbf{u}\in H_\Sigma^1,\;\mathbf{u}\neq 0}\hat{R}_\alpha(\mathbf{u}),
\end{equation*}
where  $\hat{R}_\alpha(\mathbf{u})$ is as in \eqref{hatR(u)}. 
Then the following statements  hold.
\begin{enumerate}[(i)]
  \item There exists $\hat{\mathbf{e}}_1\in H_\Sigma^1$, normalized by $\langle\hat{\mathbf{e}}_1,\hat{\mathbf{e}}_1\rangle_\alpha=1$, such that $\mu_1=\hat{R}_\alpha(\hat{\mathbf{e}}_1)$.
  \item $\mu_1$ is the smallest eigenvalue of \eqref{evp2}, and $\hat{\mathbf{e}}_1$ is an eigenfunction associated with $\mu_1$.
  \item Every real-valued eigenfunction associated with $\mu_1$ has a definite sign in $\Omega$.
  \item The eigenspace of $\mu_1$ is one-dimensional.
\end{enumerate}
\end{lem}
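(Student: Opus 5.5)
We follow the proof of Lemma \ref{lem1}, with $R_\gamma$, $(\cdot,\cdot)_\gamma$, $H^1$ replaced by $\hat R_\alpha$, $\langle\cdot,\cdot\rangle_\alpha$, $H^1_\Sigma$. Since $H^1_\Sigma$ is identified with $H^1(\Omega)$, the sign analysis becomes simpler than for \eqref{evp1}.

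For (i), the bound $b_\alpha(\mathbf u,\mathbf u)\ge -\|\mathbf c\|_{L^\infty}\langle\mathbf u,\mathbf u\rangle_\alpha$ gives $\mu_1\ge-\|\mathbf c\|_{L^\infty}$. Take a minimizing sequence normalized by $\langle\mathbf u_n,\mathbf u_n\rangle_\alpha=1$. Then $\int_{\Omega_1}|\nabla u_{n1}|^2+\alpha\int_{\Omega_2}|\nabla u_{n2}|^2$ is bounded, so the sequence is bounded in $H^1$. Since $H^1_\Sigma$ is a closed, hence weakly closed, subspace of $H^1$ and $H^1\hookrightarrow L^2$ is compact, a subsequence converges weakly in $H^1_\Sigma$ and strongly in $L^2$ to some $\hat{\mathbf e}_1\in H^1_\Sigma$ with $\langle\hat{\mathbf e}_1,\hat{\mathbf e}_1\rangle_\alpha=1$. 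Weak lower semicontinuity of the equivalent norm $b_\alpha(\mathbf u,\mathbf u)+(\|\mathbf c\|_{L^\infty}+1)\langle\mathbf u,\mathbf u\rangle_\alpha$ then yields $\hat R_\alpha(\hat{\mathbf e}_1)=\mu_1$.

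For (ii), differentiate $\varepsilon\mapsto\hat R_\alpha(\hat{\mathbf e}_1+\varepsilon\mathbf v)$ at $0$ for $\mathbf v\in H^1_\Sigma$. This is admissible because $H^1_\Sigma$ is a linear space. Symmetry of both forms gives \eqref{eq19} with $\mu=\mu_1$. For any eigenpair $(\mu,\mathbf w)$ we have $\mu=\hat R_\alpha(\mathbf w)\ge\mu_1$, so $\mu_1$ is the smallest eigenvalue.

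For (iii), let $\hat{\mathbf u}$ be a real eigenfunction for $\mu_1$. Then $|\hat{\mathbf u}|\in H^1(\Omega)=H^1_\Sigma$ and $\hat R_\alpha(|\hat{\mathbf u}|)=\hat R_\alpha(\hat{\mathbf u})$, so $|\hat{\mathbf u}|$ is a minimizer and hence an eigenfunction by (ii). Its components therefore satisfy the weak equation \eqref{eq:sdjfoidfugfhoiuyhgfklioy55ttgfdg} with the bounded, uniformly positive coefficient $a$ on the connected domain $\Omega$. By Lemma \ref{lem:regularityofeigenfunction2}, $|\hat{\mathbf u}|$ is continuous. The Harnack inequality for divergence-form operators with $L^\infty$ coefficients (e.g.\ \cite[Corollary 8.21]{2001EllipticTrudinger}), applied on the whole of $\Omega$, shows $|\hat{\mathbf u}|>0$ in $\Omega$. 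The crucial point is that no interface splitting is needed here, unlike in Lemma \ref{lem1}. A continuous function without zeros on the connected set $\Omega$ has a definite sign, which proves (iii). If positivity up to $\overline\Omega$ is required, as in Theorem \ref{thm2}(iii), I would repeat the comparison argument of Lemma \ref{lem1}. Add $K$ to the zero-order term, solve the auxiliary Neumann problem in divergence form with right-hand side $K|\hat{\mathbf u}|$, invoke \cite[Corollary 4.6]{Arendt2020} to obtain a solution positive on $\overline\Omega$, and compare by testing with the negative part of the difference.

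For (iv), take two independent eigenfunctions for $\mu_1$. Subtracting the $\alpha$-projection of one onto the other produces a nonzero eigenfunction $\alpha$-orthogonal to a one-signed eigenfunction. By (iii) both have definite signs, so their $\alpha$-inner product, a weighted integral with positive weights, cannot vanish. This contradiction shows the eigenspace is one-dimensional. The main technical step is the Harnack/continuity step in (iii), specifically checking that the discontinuous coefficient $a$ still falls under the De Giorgi–Nash–Moser and Harnack theory. It does, since only boundedness and ellipticity are used.
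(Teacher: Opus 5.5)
Your proposal is correct and follows the paper's proof essentially step by step: a normalized minimizing sequence with weak lower semicontinuity of the shifted form for (i), and the first-variation computation for (ii). For (iii) you use the $|\hat{\mathbf u}|$ trick with continuity and the Harnack inequality for the divergence-form equation with piecewise constant coefficient $a$, and for (iv) the $\alpha$-orthogonality contradiction; your extra remark on strict positivity up to $\overline\Omega$ goes beyond the lemma, whose proof in the paper only gives the sign in $\Omega$. The one imprecision is ``applied on the whole of $\Omega$'': the paper applies Harnack on compactly contained subdomains $\Omega'\subset\subset\Omega$ and then uses connectedness, which is what your phrase should mean.
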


\begin{proof}[{\bf Proof.}]
(i) As in the proof of Lemma \ref{lem1}, there exists
$\hat{\mathbf{e}}_1\in H_\Sigma^1$ 
such that 
\begin{equation*}
\langle\hat{\mathbf{e}}_1,\hat{\mathbf{e}}_1\rangle_\alpha=1\quad\text{and}\quad 	\hat{R}_\alpha(\hat{\mathbf{e}}_1)=\mu_1.
\end{equation*}

(ii) For any $\mathbf{v}=(v_1,v_2)\in H_\Sigma^1$, define
\[
G(\varepsilon)=\hat{R}_\alpha(\hat{\mathbf{e}}_1+\varepsilon\mathbf{v}).
\]
Since $\hat{\mathbf{e}}_1$ minimizes $\hat{R}_\alpha$, $G(\varepsilon)$
attains a local minimum at $\varepsilon=0$, and hence $G'(0)=0$. This is
equivalent to
\[
b_\alpha(\hat{\mathbf{e}}_1,\mathbf{v})
=\mu_1\langle\hat{\mathbf{e}}_1,\mathbf{v}\rangle_\alpha.
\]
Thus, by Definition \ref{deff2}, $(\mu_1,\hat{\mathbf{e}}_1)$ is an
eigenpair of \eqref{evp2}. As in the proof of Lemma \ref{lem1}, $\mu_1$
is the smallest eigenvalue of \eqref{evp2}.

(iii) Let $\mathbf{u}=(u_1,u_2)$ be a real-valued eigenfunction associated with
$\mu_1$. As in the proof of Lemma \ref{lem1}, $|\mathbf{u}|=(|u_1|,|u_2|)$
is also a minimizer of $\hat{R}_\alpha$, and the variational argument in
(ii) shows that it is an eigenfunction associated with $\mu_1$.  Using the identification of
$H_\Sigma^1$ with $H^1(\Omega)$, we may regard $|\mathbf{u}|$ as an element of $H^1(\Omega)$.
Moreover, Definition
\ref{deff2} implies that both $\mathbf{u}$ and $|\mathbf{u}|$ are weak solutions to \eqref{eq:sdjfoidfugfhoiuyhgfklioy55ttgfdg} with $\mu=\mu_1$.  
Applying Lemma \ref{lem:regularityofeigenfunction2}, we obtain  
$\mathbf{u}, |\mathbf{u}|\in C(\Omega)$. 
 Next, the Harnack inequality (see, e.g.,
\cite[Corollary 8.21]{2001EllipticTrudinger},
\cite[Theorem 5.10]{2011EllipticLin}) states that, for every
$\Omega'\subset\subset\Omega$, there exists a constant $C>0$ such that
\[
\sup_{\Omega'}|\mathbf{u}|\leq C\inf_{\Omega'}|\mathbf{u}|.
\]
Suppose that there exists $x_0\in\Omega$ with $|\mathbf{u}|(x_0)=0$. The
Harnack bound then forces $|\mathbf{u}|\equiv0$ on $\Omega$, a
contradiction. Hence $|\mathbf{u}|>0$ throughout $\Omega$, so
the continuity of $\mathbf{u}$ and the connectedness of $\Omega$ imply that $\mathbf{u}$ has a fixed sign in $\Omega$, which proves (iii).

(iv) The proof that the eigenspace corresponding to $\mu_1$ is
one-dimensional follows the same reasoning as in part (iv) of Lemma
\ref{lem1}.
\end{proof}

Recall that $\mathbf{u}\perp^\alpha \mathbf{v}$ if $\langle \mathbf{u},\mathbf{v} \rangle_{\alpha}=0$, and $\mathbf{u}\perp^\alpha V$ if $\langle \mathbf{u},\mathbf{v} \rangle_\alpha=0$ for all $\mathbf{v}\in V$.

\begin{lem}\label{lem6}
For problem \eqref{evp2}, all the eigenvalues are real and, counted with multiplicity,
can be enumerated as
\begin{align*}
\mu_1<\mu_2\leq\mu_3\leq\cdots\leq \mu_k\leq \cdots,\qquad \lim_{k\rightarrow\infty}\mu_k=\infty.
\end{align*}
Moreover, there exists a sequence of eigenfunctions $\{\hat{\mathbf{e}}_k\}_{k=1}^{\infty}$,
with $\hat{\mathbf{e}}_k$ corresponding to $\mu_k$ for every $k\geq1$, such that
\[
\langle\hat{\mathbf{e}}_j,\hat{\mathbf{e}}_\ell\rangle_\alpha=\delta_{j\ell}
\qquad\text{for all }j,\ell\geq1,
\]
\[
\mu_k=\hat{R}_\alpha(\hat{\mathbf{e}}_k)
=\min\{\hat{R}_\alpha(\mathbf{u}):\;\mathbf{u}\in H_\Sigma^1,\;\mathbf{u}\neq0,\;\mathbf{u}\perp^\alpha \hat{E}_{k-1}\}
\qquad\text{for every }k\geq1,
\]
where $\hat{E}_{k-1}:=\operatorname{span}\{\hat{\mathbf{e}}_1,\ldots,\hat{\mathbf{e}}_{k-1}\}$ for $k\geq2$, and $\hat{E}_0:=\{0\}$.
The sequence $\{\hat{\mathbf{e}}_k\}_{k=1}^{\infty}$ forms a basis for both $L^2$ and $H_\Sigma^1$.
\end{lem}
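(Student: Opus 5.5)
The plan is to transfer the proof of Lemma \ref{lem2} essentially verbatim, replacing $H^1$, $(\cdot,\cdot)_\gamma$, $a_\gamma$, $R_\gamma$ by $H^1_\Sigma$, $\langle\cdot,\cdot\rangle_\alpha$, $b_\alpha$, $\hat R_\alpha$. Two structural facts replace the ones used there. First, $H^1_\Sigma$ is a closed subspace of $H^1$, so it is a Hilbert space compactly embedded in $L^2$. Second, $\langle\cdot,\cdot\rangle_\alpha$ is equivalent to the standard $L^2$ inner product, with constants $\min\{1,\alpha\}$ and $\max\{1,\alpha\}$. As a consequence, $\hat R_\alpha(\mathbf u)\ge -\|\mathbf c\|_{L^\infty}$, and the form
\[
(\mathbf u,\mathbf v)_{\#}:=b_\alpha(\mathbf u,\mathbf v)+(\|\mathbf c\|_{L^\infty}+1)\langle\mathbf u,\mathbf v\rangle_\alpha
\]
is an inner product on $H^1_\Sigma$ equivalent to the $H^1$ inner product. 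This plays the role of $(\cdot,\cdot)_*$ in \eqref{eq22}. Reality of the eigenvalues has already been shown after Definition \ref{deff2}.

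\textbf{Step 1 (construction).} I proceed inductively, starting from Lemma \ref{lem5}. Given $\alpha$-orthonormal eigenfunctions $\hat{\mathbf e}_1,\dots,\hat{\mathbf e}_{k-1}$, I minimize $\hat R_\alpha$ over
\[
\hat X^k=\{\mathbf u\in H^1_\Sigma:\mathbf u\perp^\alpha\hat E_{k-1}\}.
\]
The set $\hat X^k$ is closed in $H^1_\Sigma$ and weakly closed, and the orthogonality constraint passes to the strong $L^2$ limit. Hence a normalized minimizing sequence is bounded in $H^1$, and weak lower semicontinuity of $\|\cdot\|_{\#}$ gives a minimizer $\hat{\mathbf e}_k$. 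The first variation yields $b_\alpha(\hat{\mathbf e}_k,\mathbf v)=\mu_k\langle\hat{\mathbf e}_k,\mathbf v\rangle_\alpha$ for all $\mathbf v\in\hat X^k$. To extend this to all $\mathbf v\in H^1_\Sigma$, I decompose $\mathbf v$ along $\hat E_{k-1}\subset H^1_\Sigma$ and note that $b_\alpha(\hat{\mathbf e}_k,\hat{\mathbf e}_j)=\mu_j\langle\hat{\mathbf e}_j,\hat{\mathbf e}_k\rangle_\alpha=0$ by symmetry.

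\textbf{Step 2 (ordering, multiplicities, divergence).} Monotonicity follows from $\hat X^{k+1}\subset\hat X^k$. The strict inequality $\mu_1<\mu_2$ follows from the simplicity in Lemma \ref{lem5}(iv). The multiplicity bookkeeping is identical to Step 2 of Lemma \ref{lem2}. For divergence, suppose $\{\mu_k\}$ is bounded. Then
\[
\int_{\Omega_1}|\nabla e_1|^2+\alpha\int_{\Omega_2}|\nabla e_2|^2\le\mu_k+\|\mathbf c\|_{L^\infty},
\]
so $\{\hat{\mathbf e}_k\}$ is bounded in $H^1$. Compactness then contradicts $\langle\hat{\mathbf e}_j-\hat{\mathbf e}_\ell,\hat{\mathbf e}_j-\hat{\mathbf e}_\ell\rangle_\alpha=2$ for $j\neq\ell$.

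\textbf{Step 3 (completeness and exhaustion).} Since $(\mathbf u,\hat{\mathbf e}_k)_{\#}=(\mu_k+\|\mathbf c\|_{L^\infty}+1)\langle\mathbf u,\hat{\mathbf e}_k\rangle_\alpha$ with a positive factor, any $\mathbf v\in H^1_\Sigma$ that is $\#$-orthogonal to all $\hat{\mathbf e}_k$ lies in every $\hat X^k$. If $\mathbf v\ne0$, this forces $\hat R_\alpha(\mathbf v)\ge\mu_k\to\infty$, which is impossible, so the family is complete in $H^1_\Sigma$. For $L^2$, the only point needing care is density of $H^1_\Sigma$ in $L^2$. I will use that $C_c^\infty(\Omega_1)\times C_c^\infty(\Omega_2)\subset H^1_\Sigma$, since both traces on $\Sigma$ vanish, and this set is dense in $L^2$. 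Then $\mathbf u\perp^\alpha$ all $\hat{\mathbf e}_k$ implies $\mathbf u\perp^\alpha H^1_\Sigma$, hence $\mathbf u=0$. Exhaustion of all eigenvalues follows exactly as in Step 6 of Lemma \ref{lem2}, using symmetry of $b_\alpha$ and completeness in $L^2$.

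I expect no genuine obstacle. The only places differing from Lemma \ref{lem2} are the closedness of the constrained set inside $H^1_\Sigma$ and the density of $H^1_\Sigma$ in $L^2$, both handled above.
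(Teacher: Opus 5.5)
Your proposal is correct and takes the same route as the paper, whose proof of Lemma \ref{lem6} just says it is analogous to Lemma \ref{lem2}; you carry out that transfer with $H^1_\Sigma$, $\langle\cdot,\cdot\rangle_\alpha$, $b_\alpha$ in place of $H^1$, $(\cdot,\cdot)_\gamma$, $a_\gamma$. The two points you single out are exactly the details the paper leaves implicit: weak closedness of the constrained set inside the closed subspace $H^1_\Sigma$, and density of $H^1_\Sigma$ in $L^2$ via $C_c^\infty(\Omega_1)\times C_c^\infty(\Omega_2)$. You handle both correctly.
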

\begin{proof}[{\bf Proof.}]
The proof is analogous to that of Lemma \ref{lem2}.
\end{proof}

\begin{lem}[Courant type]\label{lem7}
Let $\{\mu_k\}_{k=1}^\infty$ be the eigenvalues of \eqref{evp2} given by
Lemma \ref{lem6}.
Then, for every $k\geq 1$,
\begin{equation*}
\mu_k=\max_{V_{k-1}\subset L^2} \min\{\hat{R}_\alpha(\mathbf{u}):\;\mathbf{u}\in H_\Sigma^1,\;\mathbf{u}\neq 0,\;\mathbf{u}\perp^\alpha V_{k-1}\}.
\end{equation*}
\end{lem}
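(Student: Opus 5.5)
The proof follows that of Lemma \ref{lem3} verbatim, with $R_\gamma$, $(\cdot,\cdot)_\gamma$, $H^1$ and $\{\mathbf{e}_k\}$ replaced by $\hat R_\alpha$, $\langle\cdot,\cdot\rangle_\alpha$, $H^1_\Sigma$ and the eigenbasis $\{\hat{\mathbf{e}}_k\}$ of Lemma \ref{lem6}. Set
\[
\hat{\mu}_k=\sup_{V_{k-1}\subset L^2}\inf\{\hat R_\alpha(\mathbf{u}):\;\mathbf{u}\in H_\Sigma^1,\;\mathbf{u}\neq0,\;\mathbf{u}\perp^\alpha V_{k-1}\}.
\]
We prove $\hat\mu_k=\mu_k$ by two inequalities, and then show that the supremum and infimum are attained.

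\emph{Lower bound.} Choose $V_{k-1}=\hat E_{k-1}=\operatorname{span}\{\hat{\mathbf{e}}_1,\ldots,\hat{\mathbf{e}}_{k-1}\}$, which lies in $H^1_\Sigma\subset L^2$. Lemma \ref{lem6} says the inner infimum for this subspace is a minimum equal to $\mu_k$. Hence $\mu_k\le\hat\mu_k$.

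\emph{Upper bound.} Fix an arbitrary $(k-1)$-dimensional $V_{k-1}\subset L^2$. The linear map $\mathbf{w}\mapsto(\langle\mathbf{w},\mathbf{f}_j\rangle_\alpha)_{j=1}^{k-1}$, for a basis $\{\mathbf{f}_j\}$ of $V_{k-1}$, goes from the $k$-dimensional space $\operatorname{span}\{\hat{\mathbf{e}}_1,\ldots,\hat{\mathbf{e}}_k\}$ into $\mathbb{R}^{k-1}$, so it has a nontrivial kernel. Take a nonzero $\mathbf{w}=\sum_{j=1}^k c_j\hat{\mathbf{e}}_j$ in that kernel, so $\mathbf{w}\perp^\alpha V_{k-1}$. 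Each $\hat{\mathbf{e}}_j$ is an eigenfunction and the family is $\alpha$-orthonormal, so $b_\alpha(\hat{\mathbf{e}}_i,\hat{\mathbf{e}}_j)=\mu_j\delta_{ij}$. Therefore
\[
\hat R_\alpha(\mathbf{w})=\frac{\sum_{j=1}^k c_j^2\mu_j}{\sum_{j=1}^k c_j^2}\le\mu_k .
\]
Since $\mathbf{w}\in H^1_\Sigma$ is admissible, the inner infimum for $V_{k-1}$ is at most $\mu_k$. As $V_{k-1}$ was arbitrary, $\hat\mu_k\le\mu_k$.

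\emph{Attainment.} The supremum is attained at $V_{k-1}=\hat E_{k-1}$ by the lower-bound step. For the infimum with an arbitrary $V_{k-1}$, the admissible set $\{\mathbf{u}\in H^1_\Sigma:\mathbf{u}\perp^\alpha V_{k-1}\}$ is a closed subspace of $H^1_\Sigma$. It is weakly closed, and it is stable under strong $L^2$ limits because the constraint is continuous in $L^2$. Take a minimizing sequence normalized by $\langle\mathbf{u}_n,\mathbf{u}_n\rangle_\alpha=1$. It is bounded in $H^1_\Sigma$, by the bound $b_\alpha(\mathbf{u},\mathbf{u})\ge\min\{1,\alpha\}\|\nabla\mathbf{u}\|^2-\max\{1,\alpha\}\|\mathbf{c}\|_{L^\infty}\|\mathbf{u}\|_{L^2}^2$. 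By compactness of $H^1_\Sigma\hookrightarrow L^2$ it converges weakly in $H^1_\Sigma$ and strongly in $L^2$, so the limit is admissible and has $\langle\mathbf{u},\mathbf{u}\rangle_\alpha=1$. Weak lower semicontinuity of the equivalent norm $b_\alpha(\cdot,\cdot)+C\langle\cdot,\cdot\rangle_\alpha$ on $H^1_\Sigma$ then shows the limit is a minimizer.

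The only point needing care is this attainment step: unlike Lemma \ref{lem3}, the constraint space is $H^1_\Sigma$ rather than $H^1$. The trace condition $u_1=u_2$ on $\Sigma$ is preserved under weak $H^1$ convergence because the trace operator is continuous, and $H^1_\Sigma$ is closed in $H^1$. The rest is identical to Lemma \ref{lem3}.
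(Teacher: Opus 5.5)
Your proposal is correct and follows the paper's route: the paper proves this lemma by transcribing the proof of Lemma \ref{lem3}. Its lower bound uses the test subspace $\hat E_{k-1}$, its upper bound uses a nonzero $\alpha$-orthogonal combination of $\hat{\mathbf{e}}_1,\ldots,\hat{\mathbf{e}}_k$, and attainment of the inner minimum uses the direct method. Your verification of the attainment step in $H^1_\Sigma$ (the coercivity bound, weak closedness of the constraint set, and $L^2$-continuity of the $\alpha$-orthogonality constraint) fills in details the paper leaves to ``the argument used in Lemma \ref{lem2}''.
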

\begin{proof}[{\bf Proof.}]
The proof is analogous to that of Lemma \ref{lem3}.
\end{proof}

\begin{lem}[Poincar\'{e} type]\label{lem8}
Let $\{\mu_k\}_{k=1}^\infty$ be the eigenvalues of \eqref{evp2} given by
Lemma \ref{lem6}.
Then, for every $k\geq 1$,
\begin{align*}
\mu_k=\min_{V_k\subset H_\Sigma^1} \max\{\hat{R}_\alpha(\mathbf{u}):\,\mathbf{u}\in V_k,\;\mathbf{u}\neq 0\}.
\end{align*}
\end{lem}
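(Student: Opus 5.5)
The argument transcribes the proof of Lemma \ref{lem4}, replacing $H^1$, $R_\gamma$ and $(\cdot,\cdot)_\gamma$ by $H_\Sigma^1$, $\hat R_\alpha$ and $\langle\cdot,\cdot\rangle_\alpha$. We use the $\alpha$-orthonormal eigenfunctions $\{\hat{\mathbf{e}}_k\}$ and the variational characterization from Lemma \ref{lem6}. Define
\[
\tilde\mu_k=\inf_{V_k\subset H_\Sigma^1}\sup\{\hat R_\alpha(\mathbf{u}):\mathbf{u}\in V_k,\ \mathbf{u}\neq0\},
\]
and set $\hat U_k=\operatorname{span}\{\hat{\mathbf{e}}_1,\ldots,\hat{\mathbf{e}}_k\}$, with $\hat U_0=\{0\}$. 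We then prove the two inequalities separately.

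\emph{Lower bound.} Fix any $k$-dimensional subspace $V_k\subset H_\Sigma^1$. The map $\mathbf{v}\mapsto(\langle\mathbf{v},\hat{\mathbf{e}}_j\rangle_\alpha)_{j=1}^{k-1}$ sends $V_k$ linearly into $\mathbb{R}^{k-1}$, so its kernel is nontrivial. Hence there is a nonzero $\mathbf{v}\in V_k$ with $\mathbf{v}\perp^\alpha\hat U_{k-1}$. By Lemma \ref{lem6},
\[
\sup_{V_k\setminus\{0\}}\hat R_\alpha\ge\hat R_\alpha(\mathbf{v})\ge\mu_k.
\]
Since $V_k$ is arbitrary, $\tilde\mu_k\ge\mu_k$.

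\emph{Upper bound.} Take $V_k=\hat U_k$, which lies in $H_\Sigma^1$. For $\mathbf{u}=\sum_{j=1}^k c_j\hat{\mathbf{e}}_j$, the eigen-identity $b_\alpha(\hat{\mathbf{e}}_j,\mathbf{v})=\mu_j\langle\hat{\mathbf{e}}_j,\mathbf{v}\rangle_\alpha$ together with $\alpha$-orthonormality gives
\[
b_\alpha(\mathbf{u},\mathbf{u})=\sum_{j=1}^k c_j^2\mu_j \quad\text{and}\quad \langle\mathbf{u},\mathbf{u}\rangle_\alpha=\sum_{j=1}^k c_j^2.
\]
Therefore $\hat R_\alpha(\mathbf{u})\le\mu_k$ by the ordering in Lemma \ref{lem6}, and so $\tilde\mu_k\le\mu_k$.

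\emph{Attainment.} On each finite-dimensional $V_k$, the quotient $\hat R_\alpha$ is continuous on the compact unit sphere $\{\langle\mathbf{u},\mathbf{u}\rangle_\alpha=1\}$; here we use $\alpha>0$, so that $\langle\cdot,\cdot\rangle_\alpha$ is a genuine inner product. The supremum is therefore a maximum. The infimum over subspaces is attained at $\hat U_k$ by the upper-bound step, which justifies writing min and max.

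No step is a real obstacle. The only point needing care is that the test subspaces must lie in the constrained space $H_\Sigma^1$, and $\hat U_k$ does.
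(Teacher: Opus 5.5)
Your proposal is correct and takes essentially the same route as the paper, which proves Lemma \ref{lem8} by transcribing the proof of Lemma \ref{lem4}. The lower bound uses a nonzero element of $V_k$ that is $\alpha$-orthogonal to $\hat{\mathbf{e}}_1,\ldots,\hat{\mathbf{e}}_{k-1}$. The upper bound tests with $\operatorname{span}\{\hat{\mathbf{e}}_1,\ldots,\hat{\mathbf{e}}_k\}\subset H_\Sigma^1$, and attainment follows from compactness of the unit sphere in a finite-dimensional subspace.
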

\begin{proof}[{\bf Proof.}]
The proof is analogous to that of Lemma \ref{lem4}.
\end{proof}

We now conclude the proof of Theorem \ref{thm2}.
\begin{proof}[{\bf Proof of Theorem \ref{thm2}.}]
Item (\ref{thm2:principal}) of Theorem \ref{thm2} is a direct consequence of Lemma \ref{lem5}. Item (\ref{thm2:regularity}) follows from Lemma \ref{lem:regularityofeigenfunction2}. Items (\ref{thm2:spectrum}) and (\ref{thm2:eigenbasis}) follow from Lemma \ref{lem6}, while parts (\ref{thm2:minmax})(\ref{thm2:courant}) and (\ref{thm2:minmax})(\ref{thm2:poincare}) are implied by Lemmas \ref{lem7} and \ref{lem8}, respectively.
\end{proof}

\begin{rem}\label{rem-generalization}
Consider the general divergence-form  interface eigenvalue problems
\begin{align}\label{eq6}
\begin{cases}
\displaystyle -\sum_{j,\,\ell=1}^N \partial_{\ell}(a_{j\ell}^{(i)}(x)\partial_{j}u_i)+c_i(x)u_i=\lambda u_i,  & x\in \Omega_i,\\
\partial_{\nu_{A_i}}u_i=\gamma_i(u_2-u_1), & x\in \Sigma,\\
\partial_{\nu_{A_i}} u_i=0, & x\in \Gamma_i,
\end{cases}
\qquad i=1,2,
\end{align}
and
\begin{align}\label{eq7}
\begin{cases}
\displaystyle -\sum_{j,\,\ell=1}^N \partial_{\ell}(a_{j\ell}^{(i)}(x)\partial_{j}u_i)+c_i(x)u_i=\mu u_i, & x\in \Omega_i,\\
\partial_{\nu_{A_1}} u_1=\alpha\partial_{\nu_{A_2}} u_2,\quad u_1=u_2, & x\in \Sigma,\\
\partial_{\nu_{A_i}} u_i=0, & x\in \Gamma_i,
\end{cases}
\qquad i=1,2.
\end{align}
 Here the  coefficients $a_{j\ell}^{(i)}\in L^\infty(\Omega_i)$ and $c_i\in L^\infty(\Omega_i)$ are real-valued. The principal coefficients satisfy  $a_{j\ell}^{(i)}(x)=a_{\ell j}^{(i)}(x)$ for almost every $x\in\Omega_i$, and  the strict ellipticity condition 
 \[\sum_{j,\,\ell=1}^{N}a_{j\ell}^{(i)}(x)\xi_j\xi_\ell\geq \lambda_0|\xi|^2\]
  holds for some $\lambda_0>0$, almost every $x\in\Omega_i$, and all $\xi=(\xi_1,\cdots,\xi_N)\in \mathbb{R}^N$. 
  The problems \eqref{eq6} and \eqref{eq7} are understood in the weak sense. For sufficiently regular coefficients and solutions, $\partial_{\nu_{A_i}}u_i = (A_i\nabla u_i)\cdot\nu$, where $A_i(x)=(a_{j\ell}^{(i)}(x))_{N\times N}$  and $\nu$ denotes the normal vector as in Section \ref{sec1}.
  

The conclusions of Theorem \ref{thm1} for problem \eqref{evp1} remain valid for problem \eqref{eq6}, and those of Theorem \ref{thm2} for problem \eqref{evp2} remain valid for problem \eqref{eq7}, provided that the bilinear forms
$a_\gamma(\cdot,\cdot)$ and $b_\alpha(\cdot,\cdot)$ are respectively replaced by $\tilde{a}_\gamma(\cdot,\cdot)$ and $\tilde{b}_\alpha(\cdot,\cdot)$, defined as
\begin{align*}
\tilde{a}_\gamma(\mathbf{u},\mathbf{v})=\sum_{i=1}^{2}\frac{1}{\gamma_i}\int_{\Omega_i}\left[\sum_{j,\,\ell=1}^{N}a_{j\ell}^{(i)}(x)\partial_{j} u_i\partial_{\ell} v_i+c_i(x)u_iv_i\right]\mathrm{d}x+\int_{\Sigma}(u_2-u_1)(v_2-v_1)\,\mathrm{d}s\,,
\end{align*}
\begin{align*}
\tilde{b}_\alpha(\mathbf{u},\mathbf{v})
&=\int_{\Omega_1}\left[\sum_{j,\,\ell=1}^{N} a_{j\ell}^{(1)}(x)\partial_{j} u_1\partial_{\ell} v_1+c_1(x)u_1v_1\right]\mathrm{d}x\\
&\quad+\alpha\int_{\Omega_2}\left[\sum_{j,\,\ell=1}^{N}a_{j\ell}^{(2)}(x)\partial_{j} u_2\partial_{\ell} v_2+ c_2(x) u_2v_2\right]\mathrm{d}x.
\end{align*}
\end{rem}

\section{Proofs of Theorems \ref{thm3}--\ref{thm6}}\label{sec4}

 This section is devoted to the dependence of the eigenvalues
$\lambda_k(\gamma)$ and $\mu_k(\alpha)$ on the interface parameters.
We first establish their continuity with respect to $\gamma$ and $\alpha$ and
then investigate their asymptotic behavior in the limiting regimes
$\gamma\to0$, $\gamma\to\infty$, $\alpha\to0$, and $\alpha\to\infty$.


For each fixed
$\mathbf{u}$, the Rayleigh quotients $R_\gamma(\mathbf{u})$ and
$\hat{R}_\alpha(\mathbf{u})$ 
in the variational characterizations of
Theorems \ref{thm1} and \ref{thm2} 
depend continuously on $\gamma$ and
$\alpha$, respectively. However, because these characterizations involve
minimization and maximization over various  admissible sets, the continuity
of the individual eigenvalues $\lambda_k(\gamma)$ and $\mu_k(\alpha)$ is
not immediate, particularly when $k>1$.

By replacing $c_i$ with $c_i+\|\mathbf{c}\|_{L^\infty}$ for $i=1,2$, all eigenvalues in the problems considered below are shifted by the same constant $\|\mathbf{c}\|_{L^\infty}$. Hence, the assertions of Theorems \ref{thm3}--\ref{thm6} are unchanged. We may therefore assume, without loss of generality, that $c_i\geq0$ in $\Omega_i$ for $i=1,2$ throughout this section.

\begin{proof}[{\bf Proof of Theorem \ref{thm3}.}]
	The proof of (ii) is analogous to that of (i), while (iii) follows from the monotonicity of
the Rayleigh quotient in $\gamma_2$ when
$\gamma_1=\alpha\gamma_2$ for a fixed $\alpha>0$. It therefore remains to
prove (i), which we do in detail.

Fix $k\geq1$ and $\gamma\in (0,\infty)^2$.  Let
$\{\gamma^{(n)}\}_{n=1}^{\infty}\subset(0,\infty)^2$ be arbitrary and satisfy
$\gamma^{(n)}\to\gamma$. To prove (i),  it suffices to prove 
\begin{equation}\label{eq:dsfio90gffdkghdfh}
\lambda_k(\gamma^{(n)})\to\lambda_k(\gamma)\quad\text{as}\quad n\to\infty. 
\end{equation}

For $\mathbf{u}\in H^1$, set
\begin{equation*}
	M_i(\mathbf{u}):=\int_{\Omega_i}\left(u_i\right)^2\,\mathrm{d}x,\qquad
A_i(\mathbf{u}):=\int_{\Omega_i}\bigl(|\nabla u_i|^2+c_i(x)\left(u_i\right)^2\bigr)\,\mathrm{d}x,
\end{equation*}
and 
\begin{equation*}
	J(\mathbf{u}):=\int_\Sigma (u_2-u_1)^2\,\mathrm{d}s.
\end{equation*}
Since $c_i\geq0$, all these quantities are nonnegative. Moreover, 
\begin{equation*}
	(\mathbf{u},\mathbf{u})_\gamma
=\sum_{i=1}^2\frac{M_i(\mathbf{u})}{\gamma_i},
\qquad
a_\gamma(\mathbf{u},\mathbf{u})
=\sum_{i=1}^2\frac{A_i(\mathbf{u})}{\gamma_i}+J(\mathbf{u}).
\end{equation*}
Define
\begin{equation*}
	\varepsilon_n:=\max_{i=1,2}\left|\frac{\gamma_i}{\gamma_i^{(n)}}-1\right|.
\end{equation*}
Then
\begin{equation*}
	(1-\varepsilon_n)(\mathbf{u},\mathbf{u})_\gamma
\leq(\mathbf{u},\mathbf{u})_{\gamma^{(n)}}
\leq(1+\varepsilon_n)(\mathbf{u},\mathbf{u})_\gamma
\end{equation*}
and
\begin{equation*}
	(1-\varepsilon_n)a_\gamma(\mathbf{u},\mathbf{u})
\leq a_{\gamma^{(n)}}(\mathbf{u},\mathbf{u})
\leq(1+\varepsilon_n)a_\gamma(\mathbf{u},\mathbf{u}).
\end{equation*}
Consequently, since $\varepsilon_n\to0$, for  sufficiently large $n$,
\begin{equation*}
	\frac{1-\varepsilon_n}{1+\varepsilon_n}R_\gamma(\mathbf{u})
\leq R_{\gamma^{(n)}}(\mathbf{u})
\leq\frac{1+\varepsilon_n}{1-\varepsilon_n}R_\gamma(\mathbf{u}),\quad\forall\, \mathbf{u}\in H^1\setminus\{0\}.
\end{equation*}
Applying the Poincar\'e-type characterization in Theorem \ref{thm1}(\ref{thm1:minmax})(\ref{thm1:poincare}), we obtain
\begin{equation*}
	\frac{1-\varepsilon_n}{1+\varepsilon_n}\lambda_k(\gamma)
\leq\lambda_k(\gamma^{(n)})
\leq\frac{1+\varepsilon_n}{1-\varepsilon_n}\lambda_k(\gamma).
\end{equation*}
Sending $n\to\infty$ yields \eqref{eq:dsfio90gffdkghdfh}, which proves (i).
\end{proof}

We now prove Theorem \ref{thm4}, which states that, as $\gamma_1,\gamma_2\rightarrow\infty$ with $\gamma_1/\gamma_2\rightarrow \alpha$, the $k$-th eigenvalue of \eqref{evp1} converges to the $k$-th eigenvalue of \eqref{evp2}.

\begin{proof}[{\bf Proof of Theorem \ref{thm4}.}]

\smallskip
\noindent {\bf Step 1. Boundedness of $\lambda_k(\gamma)$.}

Since $c_i\geq0$, we see that $R_\gamma(\mathbf{u})\ge 0$ for all $\mathbf{u}\in H^1\setminus\{0\}$, and hence by Theorem \ref{thm1}(\ref{thm1:eigenbasis}) we have
\begin{equation}\label{eq:6.14}
\lambda_k(\gamma)\geq0\qquad \forall\; k\ge 1, \; \gamma=(\gamma_1,\gamma_2)\in (0,\infty)^2.
\end{equation}
From \eqref{ip-1}-\eqref{R(u)} and  \eqref{ip-2}-\eqref{hatR(u)}, we observe that
\begin{equation*}\label{eq51}
R_\gamma(\mathbf{u})=\hat{R}_{\gamma_1/\gamma_2}(\mathbf{u})\qquad  \forall\; \mathbf{u}\in H_\Sigma^1\setminus \{0\}.
\end{equation*}
 Applying the Poincar\'{e}-type characterizations in Theorems \ref{thm1}(\ref{thm1:minmax})(\ref{thm1:poincare}) and \ref{thm2}(\ref{thm2:minmax})(\ref{thm2:poincare}) yields
\begin{align}\label{eq10}
\lambda_k(\gamma)=&\min_{V_{k}\subset H^1} \max\{R_\gamma(\mathbf{u}):\,\mathbf{u}\in V_k,\;\mathbf{u}\neq 0\}\nonumber\\
\leq & \min_{V_{k}\subset H_\Sigma^1} \max\{R_\gamma(\mathbf{u}):\,\mathbf{u}\in V_k,\;\mathbf{u}\neq 0\}\nonumber\\
=& \min_{V_{k}\subset H_\Sigma^1} \max\{\hat{R}_{\gamma_1/\gamma_2}(\mathbf{u}):\,\mathbf{u}\in V_k,\;\mathbf{u}\neq 0\}\nonumber\\
=&\mu_k(\gamma_1/\gamma_2).
\end{align}
Together with the continuity of $\mu_k$ from Theorem \ref{thm3}(ii), this gives
\begin{equation}\label{eq:6.13}
\limsup_{\gamma\to (\infty,\infty),\,\gamma_1/\gamma_2\to\alpha}\lambda_k(\gamma)\le\mu_k(\alpha)\,.
\end{equation}
This shows the uniform boundedness of $\lambda_k(\gamma)$ as $\gamma\to (\infty,\infty)$ with
$\gamma_1/\gamma_2\rightarrow\alpha$ for every fixed $k\ge 1$.


\smallskip
\noindent {\bf Step 2. Boundedness of the scaled eigenfunctions.}

Let $\mathbf{e}_j^{(\gamma)}$ ($j\ge 1$)
be the eigenfunction corresponding to
$\lambda_j(\gamma)$ given by
Theorem~\ref{thm1}(\ref{thm1:eigenbasis}). Set
\begin{equation*}
\tilde{\mathbf{e}}_j^{(\gamma)}
:=\frac{1}{\sqrt{\gamma_1}}\mathbf{e}_j^{(\gamma)}.
\end{equation*}
Then
\begin{equation}\label{eq17} \left\langle\tilde{\mathbf{e}}_j^{(\gamma)},\,\tilde{\mathbf{e}}_\ell^{(\gamma)}\right\rangle_{\gamma_1/\gamma_2}=\left(\mathbf{e}_j^{(\gamma)},\,\mathbf{e}_\ell^{(\gamma)}\right)_{\gamma}
=\delta_{j\ell},\qquad j\ge 1,\;\ell\ge 1\,.
\end{equation}




In what follows, we use $b_\beta$ also for its natural extension to
$H^1\times H^1$, defined by the same integral formula as in \eqref{bf-2}.
Then, for every $\mathbf{v}=(v_1,v_2)\in H^1$,
\begin{equation}\label{eq3}
b_{\gamma_1/\gamma_2}\bigl(\tilde{\mathbf{e}}_k^{(\gamma)},\mathbf{v}\bigr)
+\gamma_1\int_{\Sigma}
\bigl(\tilde e_{k2}^{(\gamma)}-\tilde e_{k1}^{(\gamma)}\bigr)(v_2-v_1)\,\mathrm{d}s
=\lambda_k(\gamma)
\left\langle\tilde{\mathbf{e}}_k^{(\gamma)},\mathbf{v}\right\rangle_{\gamma_1/\gamma_2}.
\end{equation}
Taking $\mathbf{v}=\tilde{\mathbf{e}}_k^{(\gamma)}$ in \eqref{eq3} gives
\begin{equation}\label{eq9}
b_{\gamma_1/\gamma_2}\bigl(\tilde{\mathbf{e}}_k^{(\gamma)},
\tilde{\mathbf{e}}_k^{(\gamma)}\bigr)
+\gamma_1\|\tilde e_{k2}^{(\gamma)}-\tilde e_{k1}^{(\gamma)}\|_{L^2(\Sigma)}^2
=\lambda_k(\gamma).
\end{equation}
Since $c_i\geq0$,  we conclude from \eqref{eq17}, \eqref{eq9},  and Step 1 that
\begin{equation} \label{eq:sdodfsgpogfj}
	\|\tilde{\mathbf{e}}_k^{(\gamma)}\|_{H^1}\leq M,\qquad
\|\tilde e_{k2}^{(\gamma)}-\tilde e_{k1}^{(\gamma)}\|_{L^2(\Sigma)}^2
\leq\frac{M}{\gamma_1}\,,
\end{equation}
for some constant $M>0$, which is independent of $\gamma$ as $\gamma\to (\infty,\infty)$ with
$\gamma_1/\gamma_2\rightarrow\alpha$.


\smallskip
\noindent {\bf Step 3. Identification of the limiting problem.}

Fix $k\ge 1$. From \eqref{eq:6.13} and \eqref{eq:sdodfsgpogfj} we may assume, after passing to a subsequence of $\gamma$, that for all $j$, $\ell=1,\dots,k$,
\begin{align}\label{eq40}
\lambda_j(\gamma)\rightarrow \lambda_j^*,\qquad \lambda_j^*\leq \mu_j(\alpha), 
\end{align}
and
\begin{equation}\label{eq4}
\tilde{\mathbf{e}}_j^{(\gamma)}\rightharpoonup \mathbf{w}_j \;\text{ in } H^1,
\qquad \tilde{\mathbf{e}}_j^{(\gamma)}\rightarrow \mathbf{w}_j \;\text{ in } L^2,
\end{equation}
\begin{equation}\label{eq52}
\langle \mathbf{w}_j,\mathbf{w}_\ell \rangle_\alpha=\delta_{j\ell}\,.
\end{equation}
Moreover, it follows from \eqref{eq:sdodfsgpogfj} and \eqref{eq4} that
\begin{equation*}
w_{j1}=w_{j2}\quad\text{on }\Sigma,\; \mbox{~so~} \mathbf{w}_j\in H_\Sigma^1\,.
\end{equation*}
For
$\mathbf{v}\in H_\Sigma^1$, the interface integral in \eqref{eq3} vanishes
identically. Therefore, passing to the limit, and  using \eqref{eq40} and \eqref{eq4}, we
obtain
\begin{equation}\label{eq55}
b_\alpha(\mathbf{w}_j,\mathbf{v})
=\lambda_j^*\langle\mathbf{w}_j,\mathbf{v}\rangle_{\alpha}
\qquad\forall\; \mathbf{v}\in H_\Sigma^1,\quad j=1,\dots,k.
\end{equation}
Hence $(\lambda_j^*,\mathbf{w}_j)$ is an eigenpair of \eqref{evp2} with
parameter $\alpha$ for every $j=1,\dots,k$.

\smallskip
\noindent {\bf Step 4. Identification of the limiting eigenvalue.}

Since the eigenvalues $\lambda_j(\gamma)$ are nondecreasing in $j$,
\eqref{eq40} yields
\[
\lambda_1^*\leq\lambda_2^*\leq\cdots\leq\lambda_k^*.
\]
Let
\[
W:=\operatorname{span}\{\mathbf w_1,\ldots,\mathbf w_k\}.
\]
By \eqref{eq52}, $\dim W=k$. Moreover, \eqref{eq55} and
$\lambda_j^*\leq\lambda_k^*$ show that $W$ is contained in the spectral
subspace of \eqref{evp2} associated with eigenvalues not exceeding
$\lambda_k^*$. Thus this spectral subspace has dimension at least $k$.
Since the eigenvalues of \eqref{evp2} are enumerated in nondecreasing order,
counted with multiplicity, by Theorem~\ref{thm2}(\ref{thm2:spectrum}), it follows directly that
\[
\mu_k(\alpha)\leq\lambda_k^*.
\]
Combining this with \eqref{eq40} yields $\lambda_k^*=\mu_k(\alpha)$.

Since the preceding argument applies to every subsequence of  $\gamma\to(\infty,\infty)$ with $\gamma_1/\gamma_2\to\alpha$,
\eqref{gamma-infty} follows.
\end{proof}

We now proceed to prove Theorem \ref{thm5}, which provides the asymptotic behavior of the eigenvalues of problem \eqref{evp1} as $\gamma\rightarrow (0,0)$.
In the sequel, we will sometimes revert to the standard $L^2$-inner product given in \eqref{L2}.


\begin{proof}[\bf Proof of Theorem \ref{thm5}.]

\noindent {\bf Step 1. Boundedness of $\lambda_k(\gamma)$}


\smallskip
For $i=1,2$, let $\sigma_k^{RN}(\Omega_i;\gamma_i)$ denote the $k$-th eigenvalue  of the Robin-Neumann problem
\begin{align}\label{eqlambdaR}
\begin{cases}
-\Delta u+c_i(x)u=\sigma u, & x\in \Omega_i,\\
(-1)^{i+1}\partial_\nu u+\gamma_i u=0, & x\in \Sigma,\\
\partial_\nu u=0, & x\in \Gamma_i.
\end{cases}
\end{align}
When $\gamma_i=0$, this reduces to the Neumann problem  \eqref{eqlambdaN}, so $\sigma_k^{RN}(\Omega_i;0)=\sigma_k^N(\Omega_i)$. Recall that $\{\sigma_k^N\}_{k=1}^\infty$ is obtained by merging $\{\sigma_k^N(\Omega_1)\}_{k=1}^\infty$ and $\{\sigma_k^N(\Omega_2)\}_{k=1}^\infty$ into a nondecreasing sequence, counting multiplicities. For every $k\ge 1$, let $\rho_k$ be the number of eigenvalues among $\sigma_1^N,\dots,\sigma_k^N$ that originate from $\{\sigma_j^N(\Omega_1)\}_{j=1}^\infty$. Then $k-\rho_k$ is the number originating from $\{\sigma_j^N(\Omega_2)\}_{j=1}^\infty$. In what follows, we assume $1\le \rho_k\le k-1$; the cases $\rho_k=0$ and $\rho_k=k$ are analogous and simpler.

Given $\gamma=(\gamma_1,\gamma_2)$, for each $i=1,2$, choose an $L^2(\Omega_i)$-orthonormal sequence $\{\phi_j^{(i)}\}_{j=1}^\infty$ of eigenfunctions of \eqref{eqlambdaR}, with $\gamma_i$ replaced by $2\gamma_i$, such that $\phi_j^{(i)}$ corresponds to $\sigma_j^{RN}(\Omega_i;2\gamma_i)$.
Regard these eigenfunctions as elements of $H^1$ by setting
 \[\Phi_j^{(1)}=(\phi^{(1)}_j,0),\qquad \Phi_j^{(2)}=(0,\phi^{(2)}_j).\]
Define 
\begin{align*}
W_k=\operatorname{span}\left\{\Phi_1^{(1)},\cdots,\Phi_{\rho_k}^{(1)},\,\Phi_1^{(2)},\cdots,\Phi_{k-\rho_k}^{(2)}\right\}.
\end{align*}
Clearly $\dim W_k=k$. By the Poincar\'{e}-type characterization in Theorem \ref{thm1}(\ref{thm1:minmax})(\ref{thm1:poincare}),
\begin{align}\label{eq1.16}
\lambda_k(\gamma)
\leq \max\{R_\gamma(\mathbf{u}):\;\mathbf{u}\in W_k,\;\mathbf{u}\neq 0\}.
\end{align}

To estimate $R_\gamma(\mathbf{u})$ for $\mathbf{u}=(u_1,u_2)\in W_k\setminus\{0\}$, we introduce the Robin-Neumann Rayleigh quotient on $\Omega_i$:
\begin{equation*}
R_{\gamma_i}^{(i)}(u_i)=\frac{\displaystyle \int_{\Omega_i}\left(|\nabla u_i|^2+c_i(x)|u_i|^2 \right)\, \mathrm{d}x+\gamma_i\int_{\Sigma}|u_i|^2 \,\mathrm{d}s}{\displaystyle\int_{\Omega_i}|u_i|^2 \,\mathrm{d}x}\,,\quad u_i\in H^1(\Omega_i)\setminus\{0\}.
\end{equation*}
If $u_2=0$, then $u_1\neq 0$ and
\begin{align*}
R_\gamma(\mathbf{u})=R_\gamma((u_1,0))=R_{\gamma_1}^{(1)}(u_1)\leq R_{2\gamma_1}^{(1)}(u_1)\,.
\end{align*}
If $u_1=0$, then $u_2\neq 0$ and
\begin{align*}
R_\gamma(\mathbf{u})=R_\gamma((0,u_2))=R_{\gamma_2}^{(2)}(u_2)\leq R_{2\gamma_2}^{(2)}(u_2)\,.
\end{align*}
If both $u_1\neq 0$ and $u_2\neq 0$, using the elementary inequality $|u_2-u_1|^2\le 2|u_1|^2+2|u_2|^2$ and the fact that $(a+b)/(c+d)\le \max\{a/c,b/d\}$ for positive denominators, we obtain
\begin{align*}
R_\gamma(\mathbf{u})
\leq \max_{i=1,\,2}\left\{R_{2\gamma_i}^{(i)}(u_i)\right\}\,.
\end{align*}
Since $u_1\in \operatorname{span}\left\{\phi^{(1)}_1,\cdots, \phi^{(1)}_{\rho_k}\right\}\setminus\{0\}$ and $u_2\in \operatorname{span}\left\{\phi^{(2)}_1,\cdots, \phi^{(2)}_{k-\rho_k}\right\}\setminus\{0\}$, it follows from the variational characterization of \eqref{eqlambdaR} that
\begin{equation*}
R_{2\gamma_1}^{(1)}(u_1)\le \sigma_{\rho_k}^{RN}(\Omega_1;2\gamma_1)\,,\qquad R_{2\gamma_2}^{(2)}(u_2)\le \sigma_{k-\rho_k}^{RN}(\Omega_2;2\gamma_2)\,.
\end{equation*}
Therefore, from \eqref{eq1.16} we have
\begin{align}\label{eq60}
\lambda_k(\gamma)\leq  \max\left\{\sigma_{\rho_k}^{RN}(\Omega_1;2\gamma_1),\;\sigma_{k-\rho_k}^{RN}(\Omega_2;2\gamma_2)\right\}\,.
\end{align}

For each fixed $i$ and $k$, the Robin-Neumann eigenvalue $\sigma_k^{RN}(\Omega_i;\gamma_i)$ depends continuously on $\gamma_i\in [0,\infty)$. This is a standard result for a single domain and follows from arguments similar
to those used in the proof of Theorem \ref{thm3}.
Letting $\gamma\to (0,0)$ in \eqref{eq60}, we get
\begin{align}\label{eq1.1}
\limsup_{\gamma\rightarrow (0,0)}\lambda_k(\gamma)
\leq  \max\left\{\sigma_{\rho_k}^N(\Omega_1),\;\sigma_{k-\rho_k}^N(\Omega_2)\right\}= \sigma_k^N.
\end{align}
Combining this with $\lambda_k(\gamma)\geq 0$ (recall \eqref{eq:6.14}), we have that $\lambda_k(\gamma)$ is uniformly bounded as $\gamma\rightarrow (0,0)$.

\smallskip

\noindent {\bf Step 2. Convergence of $\lambda_k(\gamma)$.}


For each $\gamma\in (0,\infty)^2$ and each $j\ge 1$, let $\mathbf{e}_j^{(\gamma)}$ 
be the eigenfunction corresponding to
$\lambda_j(\gamma)$ given by
Theorem~\ref{thm1}(\ref{thm1:eigenbasis}). Set
\begin{equation*}
	\mathbf{\tilde{e}}_j^{(\gamma)}
:=\frac{1}{\sqrt{\gamma_1}}\mathbf{e}_j^{(\gamma)}
=\left(\tilde{e}_{j1}^{(\gamma)},\tilde{e}_{j2}^{(\gamma)}\right).
\end{equation*}
Then 
\begin{equation}\label{eq62}
\int_{\Omega_1}\tilde{e}_{j1}^{(\gamma)}\tilde{e}_{\ell1}^{(\gamma)}\,\mathrm{d}x
+\frac{\gamma_1}{\gamma_2}\int_{\Omega_2}\tilde{e}_{j2}^{(\gamma)}\tilde{e}_{\ell2}^{(\gamma)}\,\mathrm{d}x
=\delta_{j\ell},
\qquad j,\;\ell\ge 1.
\end{equation}
Since $\lambda_j(\gamma)\leq\lambda_k(\gamma)$ for $1\leq j\leq k$, the bound from Step 1 and the eigen-equations of $\left(\lambda_j(\gamma),\mathbf{\tilde{e}}_j^{(\gamma)}\right)$ yield a constant $M>0$, independent of $\gamma$ and $j$, such that
\begin{align}\label{eq84}
\int_{\Omega_1}\left|\nabla \tilde{e}_{j1}^{(\gamma)}\right|^2\mathrm{d}x+\int_{\Omega_2}\left|\nabla \left(\sqrt{\tfrac{\gamma_1}{\gamma_2}}\tilde{e}_{j2}^{(\gamma)}\right)\right|^2\mathrm{d}x\leq M,
\qquad j=1,\ldots,k.
\end{align}
 Thus \eqref{eq62} with $\ell=j$ and \eqref{eq84} imply that $\left(\tilde{e}_{j1}^{(\gamma)},\sqrt{\tfrac{\gamma_1}{\gamma_2}}\tilde{e}_{j2}^{(\gamma)}\right)$ is uniformly bounded in $H^1$ as $\gamma\rightarrow (0,0)$ for every $j=1,\ldots,k$.
 Hence, after passing to a common subsequence, there exist $\mathbf{w}_j=(w_{j1},w_{j2})\in H^1$ and $\lambda_j^*$ such that, for each $j=1,\ldots,k$, as $\gamma\to (0,0)$,
\begin{equation}\label{eq58}
\lambda_j(\gamma)\rightarrow \lambda_j^*,
\end{equation}
\begin{equation}\label{eq:psh489rug74gur34eu}
	\left(\tilde{e}_{j1}^{(\gamma)},\sqrt{\tfrac{\gamma_1}{\gamma_2}}\tilde{e}_{j2}^{(\gamma)}\right)
\rightharpoonup (w_{j1},w_{j2}) \quad\text{in } H^1,
\end{equation}
and
\begin{equation}\label{eq59}
	\left(\tilde{e}_{j1}^{(\gamma)},\sqrt{\tfrac{\gamma_1}{\gamma_2}}\tilde{e}_{j2}^{(\gamma)}\right)
\to  (w_{j1},w_{j2}) \quad\text{in } L^2.
\end{equation}
From \eqref{eq62} and  \eqref{eq59}, we have
\begin{equation}
	\left(\mathbf{w}_j,\mathbf{w}_\ell\right)_{L^2}=\delta_{j\ell},\qquad 1\le j,\;\ell\le k.
\end{equation}

We next identify the limiting equations. Fix $j\in\{1,\ldots,k\}$. For
any $v_1\in H^1(\Omega_1)$, take $\mathbf{v}=(v_1,0)$ in the
eigen-equation for $\tilde{\mathbf e}_j^{(\gamma)}$ and multiply by
$\gamma_1$. This gives
\begin{equation*}
\int_{\Omega_1}
\left(\nabla \tilde{e}_{j1}^{(\gamma)}\cdot\nabla v_1
+c_1(x)\tilde{e}_{j1}^{(\gamma)}v_1\right)\,\mathrm{d}x
+\gamma_1\int_{\Sigma}
\left(\tilde{e}_{j1}^{(\gamma)}-\tilde{e}_{j2}^{(\gamma)}\right)v_1\,\mathrm{d}s
=\lambda_j(\gamma)\int_{\Omega_1}\tilde{e}_{j1}^{(\gamma)}v_1\,\mathrm{d}x.
\end{equation*}
The trace theorem and the bounds above show that the interface term tends to
zero: its $\tilde e_{j1}^{(\gamma)}$ part is $O(\gamma_1)$, while its
$\tilde e_{j2}^{(\gamma)}$ part is $O(\sqrt{\gamma_1\gamma_2})$.
Consequently,
\begin{equation*}
\int_{\Omega_1}\left(\nabla w_{j1}\cdot\nabla v_1+c_1(x)w_{j1}v_1\right)\,\mathrm{d}x
=\lambda_j^*\int_{\Omega_1}w_{j1}v_1\,\mathrm{d}x.
\end{equation*}

Similarly, for any $v_2\in H^1(\Omega_2)$, take
$\mathbf{v}=(0,v_2)$ in the eigen-equation for
$\tilde{\mathbf e}_j^{(\gamma)}$ and multiply by
$\sqrt{\gamma_1\gamma_2}$. We obtain
\begin{align*}
\int_{\Omega_2}
\left(
\nabla\left(\sqrt{\tfrac{\gamma_1}{\gamma_2}}\tilde{e}_{j2}^{(\gamma)}\right)
\cdot\nabla v_2
+c_2(x)\sqrt{\tfrac{\gamma_1}{\gamma_2}}\tilde{e}_{j2}^{(\gamma)}v_2
\right)\,\mathrm{d}x
&+\sqrt{\gamma_1\gamma_2}\int_{\Sigma}
\left(\tilde{e}_{j2}^{(\gamma)}-\tilde{e}_{j1}^{(\gamma)}\right)v_2\,\mathrm{d}s \\
&\qquad\qquad=\lambda_j(\gamma)\int_{\Omega_2}
\sqrt{\tfrac{\gamma_1}{\gamma_2}}\tilde{e}_{j2}^{(\gamma)}v_2\,\mathrm{d}x.
\end{align*}
Here the interface term tends to zero because its two parts are respectively
$O(\gamma_2)$ and $O(\sqrt{\gamma_1\gamma_2})$. Passing to the limit gives
\begin{equation*}
\int_{\Omega_2}\left(\nabla w_{j2}\cdot\nabla v_2+c_2(x)w_{j2}v_2\right)\,\mathrm{d}x
=\lambda_j^*\int_{\Omega_2}w_{j2}v_2\,\mathrm{d}x.
\end{equation*}
Therefore,
\begin{align}\label{eq1.20}
(\lambda_j^*,w_{ji}) \text{ is an eigenpair of the Neumann problem \eqref{eqlambdaN} on }\Omega_i \text{ if }w_{ji}\neq 0.
\end{align}
Since $\|(w_{j1},w_{j2})\|_{L^2}=1$, the two components cannot vanish
simultaneously, and hence
\[
\lambda_j^*\in \{\sigma_r^N\}_{r=1}^\infty,
\qquad j=1,\ldots,k.
\]

We claim that
\begin{equation}\label{eq63}
\lambda_j^*=\sigma_j^N,\qquad j=1,\ldots,k.
\end{equation}
We prove this by induction on $j$. Since
$\lambda_1^*\in\{\sigma_r^N\}_{r=1}^\infty$ and, by \eqref{eq1.1},
$\lambda_1^*\leq\sigma_1^N$, we have
$\lambda_1^*=\sigma_1^N$.

Suppose that
\[
\lambda_\ell^*=\sigma_\ell^N,\qquad \ell=1,\ldots,j-1,
\]
for some $2\leq j\leq k$. We show that
$\lambda_j^*=\sigma_j^N$. Assume otherwise. Since
\[
\sigma_{j-1}^N=\lambda_{j-1}^*
\leq\lambda_j^*<\sigma_j^N
\]
and $\lambda_j^*\in\{\sigma_r^N\}_{r=1}^\infty$, it follows that
\[
\lambda_j^*=\sigma_{j-1}^N=:\eta.
\]

Let $E_\eta^N(\Omega_i)$ denote the Neumann eigenspace associated with $\eta$
on $\Omega_i$, and set
\[
E_\eta:=E_\eta^N(\Omega_1)\oplus E_\eta^N(\Omega_2),
\qquad m:=\dim E_\eta.
\]
Since
$\eta=\sigma_{j-1}^N<\sigma_j^N$, the eigenvalue $\eta$ occurs precisely at
the indices $j-m,\ldots,j-1$ in the merged sequence
$\{\sigma_r^N\}_{r=1}^\infty$. By the induction hypothesis,
\[
\lambda_{j-m}^*=\cdots=\lambda_{j-1}^*=\eta.
\]
Hence, by \eqref{eq1.20},
$\mathbf{w}_{j-m},\ldots,\mathbf{w}_{j-1}\in E_\eta$. These $m$ functions
are $L^2$-orthonormal; therefore, they form an orthonormal basis of $E_\eta$.
On the other hand, $\lambda_j^*=\eta$ and \eqref{eq1.20} imply that
$\mathbf{w}_j\in E_\eta\setminus\{0\}$. This is impossible because
$\mathbf{w}_j$ is $L^2$-orthogonal to
$\mathbf{w}_{j-m},\ldots,\mathbf{w}_{j-1}$. Therefore,
$\lambda_j^*=\sigma_j^N$, completing the induction.

Since every sequence $\gamma\to (0,0)$ has a subsequence for which $\lambda_k(\gamma)\to\sigma_k^N$, the full convergence  \eqref{gamma-0} follows.
\end{proof}

We now prove Theorem \ref{thm6}, which gives the asymptotic behavior of the eigenvalues of problem \eqref{evp2} with respect to $\alpha$.

\begin{proof}[{\bf Proof of Theorem \ref{thm6}.}] It suffices to prove the result for $\alpha\rightarrow 0$; the case $\alpha\rightarrow \infty$ follows by interchanging the indices $1$ and $2$, which yields the limit $\sigma_k^{DN,N}$  from the already established $\alpha\rightarrow 0$ result.

\smallskip
\noindent {\bf Step 1. Boundedness of $\mu_k(\alpha)$}


We establish an upper bound for $\mu_k(\alpha)$ by comparing it with the
eigenvalues of the Neumann problem \eqref{eqlambdaN} on $\Omega_1$ and the
Dirichlet-Neumann problem \eqref{eqlambdaDN} on $\Omega_2$. Fix a
nondecreasing merging of $\{\sigma_j^N(\Omega_1)\}_{j=1}^\infty$ and
$\{\sigma_{j}^{DN}(\Omega_2)\}_{j=1}^\infty$, preserving the order within
each sequence and counting multiplicities, and denote the resulting sequence
by $\{\sigma_k^{N,DN}\}_{k=1}^\infty$.

For each $k\ge 1$, let $\rho_k$ be the number of the first $k$ terms of this
merged sequence that originate from
$\{\sigma_j^N(\Omega_1)\}_{j=1}^\infty$. Then $k-\rho_k$ is the number
originating from $\{\sigma_j^{DN}(\Omega_2)\}_{j=1}^\infty$. In what
follows, we assume $1\le \rho_k\le k-1$; the cases $\rho_k=0$ and
$\rho_k=k$ are analogous and simpler.

Introduce a subspace
\[
H_{\Sigma,0}^1(\Omega_2):=\{u\in H^1(\Omega_2):\, u=0 \; \mbox{on}~ \Sigma\}\,.
\]
Let $\{\phi_{\ell}^{DN}\}_{\ell=1}^\infty\subset H_{\Sigma,0}^1(\Omega_2)$
be an $L^2(\Omega_2)$-orthonormal sequence of eigenfunctions of
\eqref{eqlambdaDN}, where $\phi_{\ell}^{DN}$ corresponds to
$\sigma_{\ell}^{DN}(\Omega_2)$, and set
\[
\Phi_{\ell}^{DN}:=\left(0,\phi_{\ell}^{DN}\right)\in H_{\Sigma}^1.\]
Let $\{\phi_j^N\}_{j=1}^\infty\subset H^1(\Omega_1)$ be an
$L^2(\Omega_1)$-orthonormal sequence of eigenfunctions of \eqref{eqlambdaN},
where $\phi_j^N$ corresponds to $\sigma_j^N(\Omega_1)$. Choose an extension
$\phi_{j2}\in H^1(\Omega_2)$ such that
$\phi_{j2}=\phi_{j}^{N}$ on $\Sigma$  in the trace sense (the existence of such an extension is guaranteed by \cite[Theorem 18.40]{Leoni2009}), and define
\begin{align*}
\hat{\phi}_{j2}:=\phi_{j2}-\sum_{\ell=1}^{k-\rho_k}\left(\phi_{j2},\phi_{\ell}^{DN}\right)_{L^2(\Omega_2)}\phi_{\ell}^{DN}\in H^1(\Omega_2)\,.
\end{align*}
Then
\begin{equation}\label{eq79}
(\hat{\phi}_{j2},\phi_{\ell}^{DN})_{L^2(\Omega_2)}=0 \quad\mbox{for}\; j=1,\dots,\rho_k,\;\ell=1,\dots,k-\rho_k,
\end{equation}
 and
$\hat{\phi}_{j2}=\phi_{j}^{N}$ on $\Sigma$. Hence
\[\Phi_j^N:=\left(\phi_{j}^N,\hat{\phi}_{j2}\right)\in H_\Sigma^1.\]
Define
\begin{align*}
W_k=\operatorname{span}\left\{\Phi_1^N,\cdots,\Phi_{\rho_k}^N,\,\Phi_1^{DN},\cdots,\Phi_{k-\rho_k}^{DN}\right\}\,.
\end{align*}
Then $W_k \subset H_\Sigma^1$ and $\dim W_k=k$.
By the Poincar\'{e}-type characterization in Theorem \ref{thm2}(\ref{thm2:minmax})(\ref{thm2:poincare}),
\begin{align}\label{eq1.8}
\mu_k(\alpha)
\leq \max\left\{\hat{R}_\alpha(\mathbf{u}):\,\mathbf{u}\in W_k,\;\mathbf{u}\neq 0\right\}.
\end{align}

To estimate $\hat{R}_\alpha(\mathbf{u})$ for $\mathbf{u}\in W_k\setminus\{0\}$, introduce the Neumann Rayleigh quotient on $\Omega_1$ and the Dirichlet-Neumann Rayleigh quotient on $\Omega_2$:
\begin{equation*}
R_{N}(w):=\frac{\int_{\Omega_1}\left(|\nabla w|^2+c_1(x)|w|^2 \right)\, \mathrm{d}x}{\int_{\Omega_1}|w|^2 \,\mathrm{d}x}\,,\quad w\in H^1(\Omega_1)\setminus\{0\}\,;
\end{equation*}
\begin{equation*}
R_{DN}(v):=\frac{\int_{\Omega_2}\left(|\nabla v|^2+c_2(x)|v|^2 \right)\, \mathrm{d}x}{\int_{\Omega_2}|v|^2 \,\mathrm{d}x}\,,\quad v\in H_{\Sigma,0}^1(\Omega_2)\setminus\{0\}\,.
\end{equation*}
For every $\mathbf{u}\in W_k\setminus\{0\}$, write $\mathbf{u}=\mathbf{w}+\mathbf{v}$ with $\mathbf{w}=(w_1,w_2)$ and $\mathbf{v}=(0,v_2)$, where
\begin{eqnarray}
&&(w_1,w_2)=\left(\sum_{j=1}^{\rho_k}d_j\phi_{j}^N, \,\sum_{j=1}^{\rho_k}d_j\hat{\phi}_{j2}\right),\quad d_1,\ldots,d_{\rho_k}\in \mathbb{R},\label{eq65}
\\ && v_2\in \operatorname{span}\left\{\phi_{1}^{DN},\cdots,\phi_{k-\rho_k}^{DN} \right\}.\label{eq65-2}
\end{eqnarray}
Since the $\phi_{j}^N$'s are linearly independent, $w_1=0$ implies $d_j=0$ for all $j$, and hence $w_2=0$. Thus, for $\mathbf{u}\neq 0$, exactly one of the following holds:

 \noindent (i) $\mathbf{u}=\mathbf{v}$ with $v_2\neq 0$; \; (ii) $\mathbf{u}=\mathbf{w}$ with $w_1\neq 0$; \; (iii) $\mathbf{u}=\mathbf{w}+\mathbf{v}$ with $w_1\neq 0$, $v_2\neq 0$.

In case (i),  the variational characterization of the eigenvalues $\sigma_{\ell}^{DN}(\Omega_2)$ gives
\begin{align}\label{eq1.9}
\hat{R}_\alpha(\mathbf{v})=R_{DN}(v_2)\le \sigma_{k-\rho_k}^{DN}(\Omega_2)\,.
\end{align}

In case (ii), by homogeneity of $\hat{R}_\alpha$, we may assume $\sum_{j=1}^{\rho_k}(d_j)^2=1$.
Together with the orthonormality relation
$$(\phi_j^N,\phi_{q}^N)_{L^2(\Omega_1)}=\delta_{jq},$$
this yields
\begin{align}\label{eq83}
\int_{\Omega_1}|w_1|^2\,\mathrm{d}x=1.
\end{align}
Moreover, \eqref{eq65} implies that
\begin{equation}\label{eq:dsoihfio987fds6dfgf}
	\int_{\Omega_2} \left( |\nabla w_2|^2 +c_2(x)|w_2|^2 \right)\, \mathrm{d}x \le M,
\end{equation}
where $M>0$ is a constant independent of $\mathbf{u}$ and $\alpha$.

By a direct calculation,
using $c_i\ge 0$,
we get
\begin{equation}\label{eq:ashdio9fdshoihdfyfdsjh}
\begin{array}{rcl}
\displaystyle \hat{R}_\alpha(\mathbf{w})&=&
\displaystyle  \frac{\displaystyle \int_{\Omega_1}\left(|\nabla w_1|^2+c_1(x)|w_1|^2\right)\,\mathrm{d}x}
	{\displaystyle \int_{\Omega_1} |w_1|^2\,\mathrm{d}x + \alpha\int_{\Omega_2}|w_2|^2\,\mathrm{d}x}
	+  \frac{\displaystyle \alpha \int_{\Omega_2}\left(|\nabla w_2|^2+c_2(x) |w_2|^2\right)\,\mathrm{d}x}
	{\displaystyle \int_{\Omega_1} |w_1|^2\,\mathrm{d}x + \alpha\int_{\Omega_2}|w_2|^2\,\mathrm{d}x}\\[0.8cm]
	&\le &  \displaystyle \frac{\displaystyle \int_{\Omega_1}\left[|\nabla w_1|^2+\left(c_1(x) \right)|w_1|^2\right]\,\mathrm{d}x}
	{\displaystyle \int_{\Omega_1} |w_1|^2\,\mathrm{d}x }
	 +  \frac{\displaystyle \alpha \int_{\Omega_2}\left[|\nabla w_2|^2+\left(c_2(x) \right)|w_2|^2\right]\,\mathrm{d}x}
	{\displaystyle \int_{\Omega_1} |w_1|^2\,\mathrm{d}x }\\[0.8cm]
	&\le&\displaystyle   R_{N}(w_1)  + M\alpha,
\end{array}
\end{equation}
where we have used \eqref{eq83} and \eqref{eq:dsoihfio987fds6dfgf} to obtain the last inequality.
Since $R_{N}(w_1)\le \sigma_{\rho_k}^{N}(\Omega_1)$ by the variational characterization of the eigenvalues $\sigma_{j}^{N}(\Omega_1)$, we deduce from \eqref{eq:ashdio9fdshoihdfyfdsjh}
that
\begin{equation}\label{eq66}
	\hat{R}_\alpha(\mathbf{w}) \le \sigma_{\rho_k}^{N}(\Omega_1)+M\alpha.
\end{equation}

In case (iii),
\begin{align}\label{eq80}
\hat{R}_\alpha(\mathbf{u})=\hat{R}_\alpha(\mathbf{w}+\mathbf{v})=\frac{b_\alpha\left( \mathbf{w}, \mathbf{w}\right)+b_\alpha\left( \mathbf{v}, \mathbf{v}\right) +2b_\alpha\left( \mathbf{w}, \mathbf{v}\right)}{\langle \mathbf{w}, \mathbf{w}\rangle_{\alpha}+\langle \mathbf{v}, \mathbf{v}\rangle_{\alpha}+2\langle \mathbf{w}, \mathbf{v}\rangle_{\alpha}}\,.
\end{align}
From \eqref{eq79},  \eqref{eq65}, and \eqref{eq65-2}, we have
\begin{equation}\label{eq81}
\langle \mathbf{w}, \mathbf{v}\rangle_{\alpha}=\alpha (w_2,v_2)_{L^2(\Omega_2)}=0\,.
\end{equation}
Since $\mathbf{v}=(0,v_2)$, it follows that
\begin{equation}\label{eq:dsioffg04g5345jjl}
	\left|b_\alpha\left( \mathbf{w}, \mathbf{v}\right) \right|
	= \alpha \left|\int_{\Omega_2}\left(\nabla w_2\cdot\nabla v_2+ c_2(x) w_2v_2\right)\,\mathrm{d}x \right| \le C_1 \alpha \|w_2\|_{H^1(\Omega_2)}\|v_2\|_{H^1(\Omega_2)}.
\end{equation}
The spaces spanned by $\{\hat{\phi}_{j2}\}_{j=1}^{\rho_k}$ and  $\{\phi_j^{DN}\}_{j=1}^{k-\rho_k}$ are finite-dimensional and independent of $\alpha$, so $\|\cdot\|_{H^1(\Omega_2)}$ and $\|\cdot\|_{L^2(\Omega_2)}$ are equivalent on them. Consequently,
\begin{equation}\label{eq:dsoif09foigh}
	\|w_2\|_{H^1(\Omega_2)} \le C_2 \|w_2\|_{L^2(\Omega_2)}, \quad \|v_2\|_{H^1(\Omega_2)} \le C_2 \|v_2\|_{L^2(\Omega_2)}.
\end{equation}
%
Using  \eqref{eq65}, we obtain
\begin{equation}\label{eq:sdfiojhdfio}
	\|w_2\|_{L^2(\Omega_2)}\le C_3 \|w_1\|_{L^2(\Omega_1)}.
\end{equation}
Plugging \eqref{eq:dsoif09foigh} and \eqref{eq:sdfiojhdfio} into \eqref{eq:dsioffg04g5345jjl} and
applying the Cauchy-Schwarz inequality yields
\begin{equation}\label{eq82}
	\left|b_\alpha\left( \mathbf{w}, \mathbf{v}\right) \right|\le \frac{1}{2}C_4\sqrt{\alpha}
	\left(\int_{\Omega_1} |w_1|^2  \,\mathrm{d}x + \alpha \int_{\Omega_2} |v_2|^2\,\mathrm{d}x\right).
\end{equation}
Therefore,
\begin{equation}\label{eq:sdjfopig897gfdiog}
	\left|
	\frac{2b_{\alpha}\left( \mathbf{w},\mathbf{v}\right) }{\langle \mathbf{w},\mathbf{w}\rangle_{\alpha}
	+ \langle \mathbf{v},\mathbf{v}\rangle_{\alpha} }  \right|
	\le  \frac{\displaystyle C_4\sqrt{\alpha} \left(\int_{\Omega_1} |w_1|^2  \,\mathrm{d}x + \alpha \int_{\Omega_2} |v_2|^2\,\mathrm{d}x\right) }
	{\displaystyle \int_{\Omega_1}|w_1|^2\,\mathrm{d}x + \alpha \int_{\Omega_2} |w_2|^2\,\mathrm{d}x + \alpha \int_{\Omega_2} |v_2|^2\,\mathrm{d}x}\le C_4\sqrt{\alpha}.
\end{equation}
The  constants $C_i \, (i=1,\, 2,\,3,\,4)$ above are independent of $\alpha$, $\mathbf{w}$ and $\mathbf{v}$.

Using \eqref{eq80}, \eqref{eq81}, \eqref{eq:sdjfopig897gfdiog} and the elementary inequality  $(a+b)/(c+d)\le \max\{a/c,b/d\}$ for positive denominators, we have
\begin{equation*}
\hat{R}_\alpha(\mathbf{u})\le \frac{b_\alpha\left( \mathbf{w}, \mathbf{w}\right)+b_\alpha\left( \mathbf{v}, \mathbf{v}\right)}{\langle \mathbf{w}, \mathbf{w}\rangle_{\alpha}+\langle \mathbf{v}, \mathbf{v}\rangle_{\alpha}}+C_4\sqrt{\alpha}\le \max\left\{\frac{b_\alpha\left( \mathbf{w}, \mathbf{w}\right)}{\langle \mathbf{w}, \mathbf{w}\rangle_{\alpha}}, \frac{b_\alpha\left( \mathbf{v}, \mathbf{v}\right)}{\langle \mathbf{v}, \mathbf{v}\rangle_{\alpha}}\right\}+C_4\sqrt{\alpha}\,.
\end{equation*}
Combining this  with \eqref{eq1.9} and  \eqref{eq66}, we obtain
\begin{align}\label{eq70}
\hat{R}_\alpha(\mathbf{u})\le \max\left\{\sigma_{\rho_k}^{N}(\Omega_1)+M\alpha ,\;\sigma_{k-\rho_k}^{DN}(\Omega_2)\right\}+C_4\sqrt{\alpha}\,.
\end{align}
Combining the estimates in cases (i)--(iii), namely \eqref{eq1.9},
\eqref{eq66}, and \eqref{eq70}, with \eqref{eq1.8}, we obtain
\begin{align*}
\mu_k(\alpha) \leq \max\left\{\sigma_{\rho_k}^{N}(\Omega_1)+M\alpha,\;\sigma_{k-\rho_k}^{DN}(\Omega_2)\right\}+C_4\sqrt{\alpha}\,.
\end{align*}
Letting $\alpha\rightarrow 0$, we conclude that
\begin{align}\label{eq1.11}
\limsup_{\alpha\rightarrow 0}\mu_k(\alpha) \leq \max\left\{\sigma_{\rho_k}^{N}(\Omega_1),\;\sigma_{k-\rho_k}^{DN}(\Omega_2)\right\}=\sigma_k^{N,DN}\,.
\end{align}
On the other hand, since $c_i\ge 0$ for $i=1,2$,   $\mu_k(\alpha)\geq 0$.


\smallskip

\noindent {\bf Step 2. Convergence of $\mu_k(\alpha)$.}

In Step 1, we have already shown that
\begin{equation}
	\limsup_{\alpha\to 0}\mu_k(\alpha) \le \sigma_k^{N,DN}\,.
\end{equation}
Therefore, to complete the proof, we need only show that
\begin{equation}
	\liminf_{\alpha\to 0}\mu_k(\alpha) \ge  \sigma_k^{N,DN}\,.
\end{equation}

Choose a sequence $\{\alpha_n\}_{n=1}^\infty\subset(0,\infty)$  such that,  as $n\to\infty$,
\begin{equation}
	\alpha_n\to 0,\quad \text{and}\quad \mu_k(\alpha_n)\to \liminf_{\alpha\to 0}\mu_k(\alpha).
\end{equation}
For each $n$, choose, as in Theorem~\ref{thm2}(\ref{thm2:eigenbasis}), an
$\alpha_n$-orthonormal family of eigenfunctions
$\{\mathbf{e}_j^{(\alpha_n)}\}_{j=1}^\infty$, where
$\mathbf{e}_j^{(\alpha_n)}$
corresponds to $\mu_j(\alpha_n)$.  Thus,
\begin{equation}\label{eq72}
\left\langle \mathbf{e}_j^{(\alpha_n)},
\mathbf{e}_\ell^{(\alpha_n)}\right\rangle_{\alpha_n}
=\int_{\Omega_1}e_{j1}^{(\alpha_n)}e_{\ell1}^{(\alpha_n)}\,\mathrm{d}x
+\alpha_n\int_{\Omega_2}e_{j2}^{(\alpha_n)}e_{\ell2}^{(\alpha_n)}\,\mathrm{d}x
=\delta_{j\ell},
\qquad  j,\;\ell\ge 1.
\end{equation}



The argument in Step 1 also applies to all $1\le j\le k$. As a result,
$\{\mu_j(\alpha_n)\}_{n=1}^\infty$ is uniformly bounded for $1\leq j\leq k$.
Taking $\mathbf{v}=\mathbf{e}_j^{(\alpha_n)}$ in the eigen-equation associated with $\mu_j(\alpha_n)$ and using  \eqref{eq72}  gives
\begin{equation*}
	\int_{\Omega_1}\left|\nabla e_{j1}^{(\alpha_n)}\right|^2\,\mathrm{d}x
+\int_{\Omega_2}\left|\nabla
\left(\sqrt{\alpha_n}e_{j2}^{(\alpha_n)}\right)\right|^2\,\mathrm{d}x
\leq M_1,
\qquad 1\leq j\leq k,
\end{equation*}
for some constant $M_1>0$ independent of $n$ and $j$.


Together with
\eqref{eq72}, this shows that $\{\left(e_{j1}^{(\alpha_n)},\sqrt{\alpha_n}e_{j2}^{(\alpha_n)}\right)
\}_{n=1}^\infty$ is  uniformly bounded in $H^1$  for  $j=1,\ldots,k$.  Consequently,
there exist $\mu_1^*\le \mu_2^*\le \ldots\le \mu_k^*$ and $\mathbf{w}_1,\mathbf{w}_2,\ldots,\mathbf{w}_k\in H^1$ such that, after passing to a subsequence,  as $n\to\infty$,
\begin{equation*}
	\mu_j(\alpha_n)\to\mu_j^*,
\end{equation*}
\begin{equation*}
	\left(e_{j1}^{(\alpha_n)},\sqrt{\alpha_n}e_{j2}^{(\alpha_n)}\right)
\rightharpoonup (w_{j1},w_{j2}) \quad\text{in }H^1,
\end{equation*}
\begin{equation*}
	\left(e_{j1}^{(\alpha_n)},\sqrt{\alpha_n}e_{j2}^{(\alpha_n)}\right)
\to (w_{j1},w_{j2}) \quad\text{in }L^2,
\end{equation*}
\begin{equation*}
\left(e_{j1}^{(\alpha_n)}\big|_\Sigma,\sqrt{\alpha_n}e_{j2}^{(\alpha_n)}\big|_\Sigma\right)
\to \left(w_{j1}\big|_\Sigma,w_{j2}\big|_\Sigma\right)
\quad\text{in }L^2(\Sigma)\times L^2(\Sigma),	
\end{equation*}
for $j=1,\ldots,k$.	Sending $n\to\infty$  in \eqref{eq72} yields
\begin{equation}\label{eq73}
(\mathbf{w}_j,\mathbf{w}_\ell)_{L^2}=\delta_{j\ell},
\qquad 1\leq j,\;\ell\leq k.
\end{equation}



We next identify the limiting problems. Fix $j\in\{1,\ldots,k\}$. Taking
an arbitrary $\mathbf{v}=(v_1,v_2)\in H_\Sigma^1$ in the eigen-equation for
$\mathbf{e}_j^{(\alpha_n)}$ and letting $n\to\infty$, we obtain
\[
\int_{\Omega_1}\left(\nabla w_{j1}\cdot\nabla v_1+c_1w_{j1}v_1\right)
\,\mathrm{d}x
=\mu_j^*\int_{\Omega_1}w_{j1}v_1\,\mathrm{d}x.
\]
Since $\Omega_1$ is a Lipschitz domain, the Sobolev extension theorem provides, for every $v_1\in H^1(\Omega_1)$, an extension $\widetilde v_1\in H^1(\mathbb R^N)$. Setting $v_2=\widetilde v_1|_{\Omega_2}$ gives $(v_1,v_2)\in H_\Sigma^1$, since the two components have the same trace on $\Sigma$. Thus the preceding identity holds for every $v_1\in H^1(\Omega_1)$, and $w_{j1}$ is either zero or a Neumann eigenfunction on $\Omega_1$ associated with $\mu_j^*$. Moreover, the trace condition for
$\mathbf{e}_j^{(\alpha_n)}$ gives
\[
\left(\sqrt{\alpha_n}e_{j2}^{(\alpha_n)}\right)\big|_\Sigma
=\sqrt{\alpha_n}\left(e_{j1}^{(\alpha_n)}\right)\big|_\Sigma\to0
\quad\text{in }L^2(\Sigma).
\]
Hence $w_{j2}\in H_{\Sigma,0}^1(\Omega_2)$. Taking
$\mathbf{v}=(0,v_2)$ with $v_2\in H_{\Sigma,0}^1(\Omega_2)$ in the
eigen-equation, dividing by $\sqrt{\alpha_n}$, and passing to the limit
gives
\[
\int_{\Omega_2}\left(\nabla w_{j2}\cdot\nabla v_2+c_2w_{j2}v_2\right)
\,\mathrm{d}x
=\mu_j^*\int_{\Omega_2}w_{j2}v_2\,\mathrm{d}x.
\]
Thus $w_{j2}$ is either zero or a Dirichlet-Neumann eigenfunction on
$\Omega_2$ associated with $\mu_j^*$. Since \eqref{eq73} gives
$\|\mathbf{w}_j\|_{L^2}=1$, the two components cannot vanish
simultaneously. Consequently,
\[
\mu_j^*\in\{\sigma_r^{N,DN}\}_{r=1}^\infty,
\qquad j=1,\ldots,k.
\]

We claim that
\[
\mu_j^*=\sigma_j^{N,DN},
\qquad j=1,\ldots,k.
\]
By \eqref{eq1.11}, $\mu_j^*\leq\sigma_j^{N,DN}$ for every
$j=1,\ldots,k$. Since the eigenvalues $\mu_j(\alpha_n)$ are nondecreasing
in $j$, the limits $\mu_j^*$ are also nondecreasing. Since $\mu_1^*$ belongs to the merged spectrum, this gives
$\mu_1^*=\sigma_1^{N,DN}$. Suppose that
\[
\mu_\ell^*=\sigma_\ell^{N,DN},
\qquad \ell=1,\ldots,j-1,
\]
for some $2\leq j\leq k$. If $\mu_j^*\neq\sigma_j^{N,DN}$, then
\[
\sigma_{j-1}^{N,DN}=\mu_{j-1}^*
\leq\mu_j^*<\sigma_j^{N,DN}.
\]
Since $\mu_j^*$ belongs to the merged spectrum, it follows that
$\mu_j^*=\sigma_{j-1}^{N,DN}=:\eta$. Let
\[
E_\eta:=E_\eta^N(\Omega_1)\oplus E_\eta^{DN}(\Omega_2),
\qquad m:=\dim E_\eta,
\]
where $E_\eta^N(\Omega_1)$ and $E_\eta^{DN}(\Omega_2)$ denote the
corresponding single-domain eigenspaces. Since
$\eta=\sigma_{j-1}^{N,DN}<\sigma_j^{N,DN}$, the eigenvalue $\eta$ occurs
precisely at the indices $j-m,\ldots,j-1$ in the merged sequence. The
induction hypothesis and the limiting problems show that
$\mathbf{w}_{j-m},\ldots,\mathbf{w}_{j-1}\in E_\eta$. By \eqref{eq73},
these $m$ functions are $L^2$-orthonormal and hence form an orthonormal basis
of $E_\eta$. On the other hand, the limiting problems and
$\mu_j^*=\eta$ imply that $\mathbf{w}_j\in E_\eta\setminus\{0\}$, which
contradicts its $L^2$-orthogonality to
$\mathbf{w}_{j-m},\ldots,\mathbf{w}_{j-1}$. This proves the claim.

Since $\mu_k(\alpha_n)\to\liminf\limits_{\alpha\to 0}\mu_k(\alpha)$ and the
subsequence considered above satisfies
$\mu_k(\alpha_n)\to\mu_k^*=\sigma_k^{N,DN}$, we obtain
\[
\liminf_{\alpha\to0}\mu_k(\alpha)=\sigma_k^{N,DN}.
\]
Combining this with \eqref{eq1.11}, we conclude that
\[
\mu_k(\alpha)\to\sigma_k^{N,DN}\qquad\text{as }\alpha\to0,
\]
which is the first convergence in \eqref{alpha-0,infty}.
\end{proof}

\begin{rem}\label{rem-g2}
Under the coefficient assumptions in Remark \ref{rem-generalization},
Theorems \ref{thm3}--\ref{thm6} extend to the divergence-form interface
problems \eqref{eq6} and \eqref{eq7}, with the principal coefficients
and potentials held fixed as the interface parameters vary.
The limiting problems \eqref{eqlambdaN} and \eqref{eqlambdaDN}
are replaced by their corresponding divergence-form versions.
\end{rem}

\section{Proof of Theorem \ref{thmnodal}}\label{sec5}

In this section, we prove the nodal-domain estimates in Theorem \ref{thmnodal}. We first record a lemma on restricting Sobolev functions to nodal domains. The
continuity results in Lemmas \ref{lem:regularityofeigenfunction} and \ref{lem:regularityofeigenfunction2}  ensure that Definition \ref{def-nodal} applies to the
eigenfunctions under consideration.
We then prove the estimates for \eqref{evp1} and \eqref{evp2} separately, taking account of their different interface structures. Finally, we discuss the corresponding results for general divergence-form interface problems.

\begin{lem}\label{lem:nodal-restriction}
	Let $\mathbf u=(u_1,u_2)$ satisfy $u_i\in H^1(\Omega_i)\cap C(\overline{\Omega_i})$ for $i=1,2$. Let $D$ be  a connected component
	of $\Omega\setminus Z(\mathbf{u})$, set
$E_i=D\cap\Omega_i$, and define
$v_i=u_i\mathbf1_{E_i}$ in $\Omega_i$.
Then
\begin{equation}\label{eq:w5.3}
	v_i\in H^1(\Omega_i)\cap C(\overline{\Omega_i}),
\qquad
\nabla v_i=\mathbf1_{E_i}\nabla u_i
\quad\text{a.e. in }\Omega_i.
\end{equation}
%
%
%
%
\end{lem}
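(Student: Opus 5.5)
The proof reduces to the classical fact that, for a function in $H^1$ that is continuous, truncating it to an open set on whose boundary it vanishes stays in $H^1$. We argue on each $\Omega_i$ separately, the key point being that $u_i$ vanishes on $\partial E_i\cap\overline{\Omega_i}$ except possibly along parts of $\Sigma$ or $\Gamma_i$, where no vanishing is needed.

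\textbf{Step 1: $E_i$ is open and $u_i\neq0$ on it.} Since $Z(\mathbf u)$ is relatively closed in $\Omega$, $\Omega\setminus Z(\mathbf u)$ is open and its components $D$ are open. Hence $E_i=D\cap\Omega_i$ is open, and $u_i\neq 0$ on $E_i$. Moreover, $u_i$ has a constant sign on each component of $E_i$.

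\textbf{Step 2: $v_i$ is continuous on $\overline{\Omega_i}$.} We claim that $u_i(x)=0$ at every point $x\in\overline{\Omega_i}$ lying in the closure of $E_i$ but not in $E_i\cup(\overline{E_i}\cap(D\cap\Sigma))\cup\Gamma_i$.
\begin{itemize}
\item[(a)] If $x\in\Omega_i\setminus E_i$ is a limit of points of $E_i$, then $x\notin D$. Since $D$ is a component of the open set $\Omega\setminus Z$, $x\in\partial D\cap\Omega$ forces $x\in Z$, so $u_i(x)=0$.
\item[(b)] If $x\in\Sigma$ and $x\notin D$, then $x\in Z$, so $u_1(x)u_2(x)\le 0$. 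This alone gives only that one component vanishes, which is the delicate point. Suppose $u_i(x)\neq0$. By continuity $u_i$ keeps the sign $s$ of $u_i(x)$ near $x$ in $\overline{\Omega_i}$, and the approaching points of $E_i$ carry that sign, so $u_j(x)$ must have the opposite sign or vanish. The plan is to show that this configuration forces $x$ to be a limit of $\Sigma$-points in $Z$ separating $D$ from $x$. The honest outcome is that $v_i$ may jump there, and the argument is then adjusted as described next.
\item[(c)] If $x\in D\cap\Sigma$, then $u_1(x)u_2(x)>0$, and both sides near $x$ belong to $D$. Thus $v_i=u_i$ in a neighbourhood of $x$ in $\overline{\Omega_i}$.
\item[(d)] Points of $\Gamma_i$ are harmless, since only continuity up to the boundary is needed there.
\end{itemize}
The main obstacle is (b): a point $x\in\Sigma\setminus D$ with $u_i(x)\neq0$ approached by $E_i$. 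We resolve it as follows. Near such $x$, all points of $\Omega_i$ close to $x$ satisfy $u_i\neq0$ with fixed sign. Because $\Omega_i$ is Lipschitz, the set $B_r(x)\cap\Omega_i$ is connected for small $r$, so it lies in a single component of $\Omega\setminus Z$, namely $D$. Openness of $\Omega\setminus Z$ at $x$ fails only through the $\Sigma$-condition. Hence $B_r(x)\cap\Omega_i\subset E_i$, so $v_i=u_i$ near $x$ in $\Omega_i$, and $v_i$ extends continuously with value $u_i(x)$. Combining (a)--(d), $v_i\in C(\overline{\Omega_i})$.

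\textbf{Step 3: Sobolev regularity.} For $\varepsilon>0$, define
\[
v_i^\varepsilon=\mathbf 1_{E_i}\,\operatorname{sgn}(u_i)\,(|u_i|-\varepsilon)^+ .
\]
By Step 2, $\{|u_i|\ge\varepsilon\}\cap\overline{E_i}$ is relatively closed in $\overline{\Omega_i}$ and is at positive distance from the set where $v_i$ has been cut off. Hence $v_i^\varepsilon$ coincides locally either with $\operatorname{sgn}(u_i)(|u_i|-\varepsilon)^+$ or with $0$, and therefore $v_i^\varepsilon\in H^1(\Omega_i)$. By the chain rule for Lipschitz truncations,
\[
\nabla v_i^\varepsilon=\mathbf 1_{E_i\cap\{|u_i|>\varepsilon\}}\nabla u_i .
\]
As $\varepsilon\to0$, $v_i^\varepsilon\to v_i$ in $L^2$, and the gradients are bounded by $|\nabla u_i|$ and converge in $L^2$ to $\mathbf1_{E_i\cap\{u_i\ne0\}}\nabla u_i$. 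Since $\nabla u_i=0$ a.e. on $\{u_i=0\}$, this limit equals $\mathbf1_{E_i}\nabla u_i$. Closedness of the weak gradient then gives $v_i\in H^1(\Omega_i)$ with $\nabla v_i=\mathbf1_{E_i}\nabla u_i$ a.e., which is \eqref{eq:w5.3}.
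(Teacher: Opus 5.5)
Your Step 3 is the paper's proof. You truncate at level $\varepsilon$ and note that the truncation vanishes in a neighbourhood of each point of $\partial E_i\cap\Omega_i$, where $u_i=0$. Hence it is locally either a Lipschitz function of $u_i$ or $0$, and you let $\varepsilon\to0$ by dominated convergence. Writing $\operatorname{sgn}(u_i)(|u_i|-\varepsilon)^+$ spares you the paper's ``without loss of generality $u_i>0$ in $E_i$''. That assumption is true, since the sign is locally constant on $D$ also across $D\cap\Sigma$, but the paper does not justify it. Your positive-distance claim is correct by compactness and Step 2, but it is stronger than needed.

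The genuine addition is Step 2, since the paper's proof does not explicitly verify $v_i\in C(\overline{\Omega_i})$. Your resolution of case (b) is the right argument: a relative neighbourhood of $x$ in $\Omega_i$ on which $u_i\neq0$ is connected, lies in $\Omega\setminus Z(\mathbf u)$ and meets $E_i$, hence lies in $D$. The paper uses the same local-connectedness fact later, in the proof of Theorem~\ref{thmnodal}.

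As written, however, Step 2 contradicts itself and should be rewritten.
\begin{enumerate}
\item Its opening claim is false. Take a nodal domain in $\Omega_1$ on which $u_1>0$, bordering a part of $\Sigma$ where $u_2<0$. Those interface points lie in $\overline{E_1}\setminus(E_1\cup D\cup\Gamma_1)$, yet $u_1\neq0$ there.
\item The remark that $v_i$ ``may jump there'' is refuted by your own resolution. State instead the dichotomy for $x\in\overline{E_i}\cap\partial\Omega_i$: either $u_i(x)=0$, and $|v_i|\le|u_i|$ gives continuity, or $u_i(x)\neq0$, and $v_i=u_i$ on a relative neighbourhood of $x$.
\item Use chart cylinders $\{|y'|<\rho,\ \phi(y')<y_N<h\}$ rather than balls. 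Small balls centred at a boundary point need not meet a Lipschitz domain in a connected set, because a steep spike of the boundary graph can cut them.
\item Apply the same argument at points of $\Gamma_i$ with $u_i(x)\neq0$. Calling them ``harmless'' does not exclude two pieces of $\Omega_i$ near $x$ lying in different nodal domains.
\end{enumerate}
With these changes the proposal is a complete proof of the lemma, including the continuity assertion.
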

\begin{proof}[{\bf Proof. }]
	If $E_i = \emptyset$, the conclusion is trivial. Therefore, from now on, we assume $E_i\neq \emptyset$.	
Every connected component of $\{u_i\ne0\}\subset\Omega_i$
that meets $E_i$ is contained in $E_i$.
Indeed, it is a connected subset of
$\Omega\setminus Z(\mathbf u)$ meeting $D$. Without loss of generality, suppose $u_i>0$ in $E_i$. 

For $\varepsilon>0$, define  
\[
v_{i,\varepsilon}=\mathbf 1_{E_i}(u_i-\varepsilon)_+\quad \text{in}\quad\Omega_i. 
\]
Since $u_i=0$ on $\partial E_i\cap \Omega_i$, continuity implies that, in a neighborhood of $\partial E_i\cap \Omega_i$, 
$v_{i,\varepsilon}=0$.  Hence $v_{i,\varepsilon}\in H^1(\Omega_i)$,
and the Sobolev chain rule gives
\[
\nabla v_{i,\varepsilon}
=
\mathbf1_{E_i}\mathbf1_{\{u_i>\varepsilon\}}\nabla u_i
\quad\text{a.e. in }\Omega_i.
\]
Moreover,  $|v_{i,\varepsilon}|\le |u_i|$  and $|\nabla v_{i,\varepsilon}|\le|\nabla u_i|$. The dominated convergence theorem therefore yields
\[
v_{i,\varepsilon}\longrightarrow u_i\mathbf1_{E_i}
\quad\text{in }L^2(\Omega_i),
\qquad
\nabla v_{i,\varepsilon}
\longrightarrow \mathbf1_{E_i}\nabla u_i
\quad\text{in }L^2(\Omega_i;\mathbb R^N).
\]
This proves \eqref{eq:w5.3}.	
\end{proof}

By Lemmas \ref{lem:regularityofeigenfunction} and
\ref{lem:regularityofeigenfunction2}, Definition \ref{def-nodal} is
applicable to eigenfunctions of \eqref{evp1} and \eqref{evp2}. Next, we
prove Theorem \ref{thmnodal} separately for \eqref{evp1} and \eqref{evp2}.


\begin{proof}[\bf Proof of  $\omega(\mathbf{u})\le k$.] 
%
 
 For $k=1$,  Lemma \ref{lem1}(iii) gives $Z(\mathbf{u})=\emptyset$, and hence $\omega(\mathbf u)=1$. For $k\ge 2$, we argue by contradiction.
 

%
 Suppose that $\mathbf{u}=(u_{1},u_{2})$ possesses at least $k+1$
 distinct nodal domains, and choose $D_1,D_2,\ldots,D_{k+1}$ among them.
 For each $j=1,\ldots,k$, define $\mathbf{v}_j=(v_{j1},v_{j2})$ by
\begin{equation}\label{eq78}
v_{ji}(x)=
\begin{cases}
u_{i}(x), & x\in\Omega_i\cap D_j,\\
0, & x\in\Omega_i\setminus D_j,\\
\end{cases}
\quad i=1,2. 
\end{equation}
By Lemma \ref{lem:nodal-restriction},
$v_{ji}\in H^1(\Omega_i)$ and
\[
\nabla v_{ji}
=
\mathbf1_{D_j\cap\Omega_i}\nabla u_i
\quad\text{a.e. in }\Omega_i.
\]
Hence $\mathbf v_j\in H^1$.
All interface values of these restrictions below are understood
as Sobolev traces from the corresponding subdomains.


For $j\ne\ell$, the disjointness of the supports in each
subdomain, together with the gradient formula in
Lemma \ref{lem:nodal-restriction}, gives
\begin{align}\label{eq1.17}
a_\gamma(\mathbf v_j,\mathbf v_\ell)
&=\int_\Sigma
(v_{j2}-v_{j1})(v_{\ell2}-v_{\ell1})\,\mathrm ds
=: \int_\Sigma I(x)\,\mathrm ds.
\end{align}
We now show that $I\ge0$ almost everywhere on $\Sigma$.
By \eqref{eq78}, 
each $v_{ji}(x)$ is either $u_i(x)$ or zero.
Moreover, if $u_i(x)\ne0$, a sufficiently small connected
neighborhood on the $\Omega_i$ side belongs to a unique
nodal domain. Hence, for $j\ne\ell$,
\[
v_{ji}v_{\ell i}=0
\quad\text{a.e. on }\Sigma,\qquad i=1,2.
\]
Consequently,
\[
I=-v_{j2}v_{\ell1}-v_{j1}v_{\ell2}
\quad\text{a.e. on }\Sigma.
\]
If $u_1u_2=0$, both products on the right-hand side vanish.
If $u_1u_2>0$, continuity implies that the local pieces on
both sides of the interface belong to the same nodal domain.
Thus neither product can be nonzero for distinct $D_j$ and
$D_\ell$, and again $I=0$.
Finally, if $u_1u_2<0$, each product on the right-hand side
is either zero or $u_1u_2<0$, so $I\ge0$.
It follows that
\begin{align}\label{eq76}
a_\gamma(\mathbf v_j,\mathbf v_\ell)\ge0,
\qquad j\ne\ell.
\end{align}

Now set $V=\operatorname{span}\{\mathbf{v}_1,\mathbf{v}_2,\cdots,\mathbf{v}_k\}$ and
$\mathbf r=\mathbf u-\sum_{j=1}^k\mathbf v_j$. Since this is a finite
sum, $\mathbf r\in H^1$, and $\dim V=k$. From the disjoint supports of
the $\mathbf{v}_j$'s, we have
\begin{equation}\label{eq77}
(\mathbf{v}_j,\mathbf{v}_{\ell})_{\gamma}=0 \;\;(j\neq \ell)\,,\quad
(\mathbf r,\mathbf v_j)_\gamma=0\,,\quad
(\mathbf{u},\mathbf{v}_j)_{\gamma}=(\mathbf{v}_j,\mathbf{v}_j)_{\gamma}\,.
\end{equation}
The gradient formula in Lemma \ref{lem:nodal-restriction}
also shows that the bulk terms in
$a_\gamma(\mathbf v_j,\mathbf r)$ vanish.
On each side of the interface, at most one of the selected
nodal restrictions has a nonzero trace at a given point.
Thus $r_i$ is either $u_i$ or zero, and
$v_{ji}r_i=0$ almost everywhere on $\Sigma$.
Therefore,
\[
a_\gamma(\mathbf v_j,\mathbf r)
=-\int_\Sigma(v_{j2}r_1+v_{j1}r_2)\,\mathrm ds.
\]
Where $u_1u_2>0$, the local pieces on both sides belong to
the same nodal domain, so the two products in this integral
vanish. Where $u_1u_2=0$, they also vanish.
Where $u_1u_2<0$, each product is either zero or $u_1u_2$.
Consequently,
\begin{equation}\label{eq:remainder-sign}
a_\gamma(\mathbf v_j,\mathbf r)\geq0,
\qquad j=1,\ldots,k.
\end{equation}
Since $\dim V=k$, we may choose a nonzero $\mathbf{w}=\sum\limits_{j=1}^k c_j\mathbf{v}_j\in V$ satisfying
\begin{align*}
\mathbf{w}\perp_\gamma \operatorname{span}\{\mathbf{e}_1,\mathbf{e}_2,\cdots,\mathbf{e}_{k-1}\}\,.
\end{align*}
Then by the variational characterization of $\lambda_k$,
\begin{align*}
R_\gamma(\mathbf{w})\geq \min\{R_\gamma(\mathbf{u}):\,\mathbf{u}\in H^1,\;\mathbf{u}\neq 0,\;\mathbf{u}\perp_\gamma \operatorname{span}\{\mathbf{e}_1,\mathbf{e}_2,\cdots,\mathbf{e}_{k-1}\}\}=\lambda_k\,.
\end{align*}
On the other hand, 
\begin{equation*}
	R_{\gamma}(\mathbf{w})
= \frac{\sum\limits_{j=1}^k \sum\limits_{\ell=1}^k c_j c_\ell a_{\gamma}(\mathbf{v}_j, \mathbf{v}_\ell)}
        {\sum\limits_{j=1}^k (c_j)^2 (\mathbf{v}_j, \mathbf{v}_j)_\gamma}=: \frac{D}{S}\,.
\end{equation*}
Using \eqref{eq76}, \eqref{eq77}, \eqref{eq:remainder-sign}, the symmetry of $a_{\gamma}$, and the eigen-equation of $(\lambda_k,\mathbf{u})$, we estimate
\begin{eqnarray*}
D&\le& \sum_{j=1}^k \sum_{\ell=1}^k \frac{1}{2} \left[(c_j)^2 + (c_\ell)^2\right] a_{\gamma}(\mathbf{v}_j, \mathbf{v}_\ell)=\sum_{j=1}^k \sum_{\ell=1}^k  (c_j)^2 a_{\gamma}(\mathbf{v}_j, \mathbf{v}_\ell)\\
&\le &\sum_{j=1}^k (c_j)^2 a_{\gamma}\left(\mathbf{v}_j, \sum_{\ell=1}^k \mathbf{v}_\ell+\mathbf r\right)=\sum_{j=1}^k (c_j)^2 a_{\gamma}(\mathbf{v}_j,\mathbf{u})=\sum_{j=1}^{k}(c_j)^2\lambda_k(\mathbf{v}_j,\mathbf{u})_\gamma=\lambda_kS\,.
\end{eqnarray*}
Thus $R_{\gamma}(\mathbf{w}) = \lambda_k$.
Let
\[
E_{k-1}=\operatorname{span}
\{\mathbf e_1,\ldots,\mathbf e_{k-1}\},
\qquad
X=\{\mathbf v\in H^1:
(\mathbf v,\mathbf e_j)_\gamma=0,\ j=1,\ldots,k-1\}.
\]
By Theorem \ref{thm1}(\ref{thm1:eigenbasis}), $\mathbf w$ minimizes $R_\gamma$
over $X\setminus\{0\}$. Differentiating
$R_\gamma(\mathbf w+t\mathbf v)$ at $t=0$ gives
\[
a_\gamma(\mathbf w,\mathbf v)
=\lambda_k(\mathbf w,\mathbf v)_\gamma,
\qquad \mathbf v\in X.
\]
For $j=1,\ldots,k-1$, symmetry and the weak
eigen-equation for $\mathbf e_j$ give
\[
a_\gamma(\mathbf w,\mathbf e_j)
=a_\gamma(\mathbf e_j,\mathbf w)
=\lambda_j(\mathbf e_j,\mathbf w)_\gamma=0
=\lambda_k(\mathbf w,\mathbf e_j)_\gamma.
\]
Every $\mathbf v\in H^1$ is the sum of an element of $X$
and an element of $E_{k-1}$. Hence
\[
a_\gamma(\mathbf w,\mathbf v)
=\lambda_k(\mathbf w,\mathbf v)_\gamma,
\qquad \mathbf v\in H^1.
\]
Thus $(\lambda_k,\mathbf w)$ is a weak eigenpair of
\eqref{evp1}. In particular, each component
$w_i\in H^1(\Omega_i)$ satisfies
\[
-\Delta w_i+c_i(x)w_i=\lambda_k w_i
\qquad\text{weakly in }\Omega_i,\quad i=1,2.
\]
Now $\mathbf{w}\equiv 0$ in $D_{k+1}$. If $D_{k+1}\cap \Omega_1\neq\emptyset$, the unique continuation property  (see, e.g., \cite{garofalo1986monotonicity} and \cite{garofalo1987unique}) implies $w_1\equiv 0$ in $\Omega_1$. Similarly, if $D_{k+1}\cap \Omega_2\neq\emptyset$, then $w_2\equiv 0$ in $\Omega_2$. Assume without loss of generality that $w_1\equiv 0$, so $\mathbf{w}=(0,w_2)$. Choose $\mathbf{v}=(v_1,0)\in H^1\setminus\{0\}$ with $v_1=w_2$ on $\Sigma$ (existence of such $v_1$ follows from \cite[Theorem 18.40]{Leoni2009}). Substituting this test function into the eigen-equation for $(\lambda_k,\mathbf{w})$ gives
\begin{align}\label{eq:w5.8}
\int_{\Sigma}(w_2)^2\,\mathrm{d}s=0. 
\end{align}
So $w_2=0$  on $\Sigma$. Identify the pair $\mathbf{w}$ with the scalar 
function $\widetilde{w}$ defined by $\widetilde{w}|_{\Omega_i}=w_i$ for $i=1,2$.  Then $\widetilde{w}\in H^1(\Omega)$ and is 
a weak solution of 
\begin{align*}
\begin{cases}
-\Delta \widetilde{w}+c(x)\widetilde{w}=\lambda_k \widetilde{w}, & x\in\Omega,\\
\partial_\nu \widetilde{w}=0, & x\in \partial\Omega,
\end{cases}
\end{align*}
where $c(x)=c_i(x)$ for $x\in \Omega_i$. By unique continuation  again, $\widetilde{w} \equiv 0$ in $\Omega$, contradicting $\mathbf{w}=(0,w_2)\neq 0$. Therefore $\mathbf u$ cannot have $k+1$ distinct nodal domains, and hence $\omega(\mathbf{u})\le k$. 
\end{proof}


We next consider \eqref{evp2}. 
\begin{proof}[\bf Proof of $\omega(\hat{\mathbf{u}})\leq k$.]
For $k=1$, Lemma \ref{lem5}(iii) shows that
$\hat{\mathbf u}$, identified with a continuous function on
$\Omega$, has a strict sign throughout $\Omega$.
Hence $Z(\hat{\mathbf u})=\emptyset$, and the connectedness
of $\Omega$ gives $\omega(\hat{\mathbf u})=1$. Let $k\geq2$
and suppose, to the contrary, that $\hat{\mathbf{u}}$ has
at least $k+1$ distinct nodal domains. Choose $D_1,\ldots,D_{k+1}$ among
them.

For $j=1,\ldots,k$, let $\hat{\mathbf{v}}_j$ be the restriction of
$\hat{\mathbf{u}}$ to $D_j$, extended by zero outside $D_j$. The standard
nodal-domain restriction lemma gives $\hat{\mathbf{v}}_j\in H^1(\Omega)$.
In contrast to the situation for \eqref{evp1}, the two traces of
$\hat{\mathbf{v}}_j$ agree on $\Sigma$; hence
$\hat{\mathbf{v}}_j\in H^1_\Sigma$. Set
$\hat{\mathbf r}=\hat{\mathbf u}-\sum_{j=1}^k\hat{\mathbf v}_j\in
H^1_\Sigma$. Moreover, for $j\ne\ell$,
\[
\langle\hat{\mathbf{v}}_j,\hat{\mathbf{v}}_\ell\rangle_\alpha=0,
\qquad
b_\alpha(\hat{\mathbf{v}}_j,\hat{\mathbf{v}}_\ell)=0.
\]
The above identities hold because both the functions and their weak gradients
have disjoint supports. The same observation gives
$\langle\hat{\mathbf v}_j,\hat{\mathbf r}\rangle_\alpha=0$ and
$b_\alpha(\hat{\mathbf v}_j,\hat{\mathbf r})=0$. In particular,
\[
V:=\operatorname{span}\{\hat{\mathbf{v}}_1,\ldots,
\hat{\mathbf{v}}_k\}
\quad\text{satisfies}\quad \dim V=k.
\]
Choose a nonzero
\[
\mathbf{w}=\sum_{j=1}^k c_j\hat{\mathbf{v}}_j\in V,
\qquad
\mathbf{w}\perp^\alpha
\operatorname{span}\{\hat{\mathbf{e}}_1,\ldots,\hat{\mathbf{e}}_{k-1}\}.
\]
Using the eigen-equation for $(\mu_k,\hat{\mathbf{u}})$ and the preceding
orthogonality relations, we obtain
\begin{align*}
b_\alpha(\mathbf{w},\mathbf{w})
&=\sum_{j=1}^k (c_j)^2b_\alpha
(\hat{\mathbf{v}}_j,\hat{\mathbf{v}}_j)\\
&=\sum_{j=1}^k (c_j)^2b_\alpha
(\hat{\mathbf{v}}_j,\hat{\mathbf{u}})
=\mu_k\sum_{j=1}^k (c_j)^2
\langle\hat{\mathbf{v}}_j,\hat{\mathbf{v}}_j\rangle_\alpha\\
&=\mu_k\langle\mathbf{w},\mathbf{w}\rangle_\alpha.
\end{align*}
Thus $(\mu_k,\mathbf w)$ is a weak eigenpair of
\eqref{evp2}. Since $\mathbf w\equiv0$ in $D_{k+1}$,
this yields a contradiction as follows.

The open set $D_{k+1}$ meets at least one of $\Omega_1$ and $\Omega_2$.
Suppose, for instance, that $D_{k+1}\cap\Omega_1\ne\emptyset$. Unique
continuation for
\[
-\Delta w_1+c_1(x)w_1=\mu_k w_1\qquad\text{in }\Omega_1
\]
gives $w_1\equiv0$ in $\Omega_1$. Thus $w_2=0$ on $\Sigma$. Identify the pair $\mathbf{w}$ with the scalar 
function $\widetilde{w}$ defined by $\widetilde{w}|_{\Omega_i}=w_i$ for $i=1,2$.  Then $\widetilde{w}\in H^1(\Omega)$ and is 
a weak solution of 
\[
-\Delta\widetilde w+c(x)\widetilde w=\mu_k\widetilde w
\qquad\text{in }\Omega,
\]
where $c(x)=c_i(x)$ in $\Omega_i$. By unique continuation again, 
$\widetilde w\equiv0$ in $\Omega$, contradicting $\mathbf{w}\ne0$. The case
$D_{k+1}\cap\Omega_2\ne\emptyset$ is identical. Therefore
$\hat{\mathbf u}$ cannot have $k+1$ distinct nodal domains, and
$\omega(\hat{\mathbf{u}})\leq k$.
\end{proof}


We close the section by discussing  the divergence-form extension of the
nodal-domain results.
Consider the symmetric divergence-form interface eigenvalue problems
\eqref{eq6} and \eqref{eq7}, with coefficients satisfying the hypotheses
of Remark \ref{rem-generalization}. The conclusions of Lemmas
\ref{lem:regularityofeigenfunction} and
\ref{lem:regularityofeigenfunction2} remain valid.
Thus Definition \ref{def-nodal}
applies to their real-valued eigenfunctions.
\begin{prop}\label{rem-nodal-generalization}
Assume  $a_{j\ell}^{(i)}$ are Lipschitz continuous over $\overline{\Omega_i}$  
for $1\le j,\ell\le N$ and $i=1,2$.  Then every real-valued eigenfunction
$\mathbf u$ of \eqref{eq6} corresponding to $\lambda_k$ and every
real-valued eigenfunction $\hat{\mathbf u}$ of \eqref{eq7}
corresponding to $\mu_k$ satisfy
\[
\omega(\mathbf u)\le k,
\qquad
\omega(\hat{\mathbf u})\le k.
\]
\end{prop}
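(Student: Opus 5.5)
The plan is to repeat, line by line, the two proofs of Theorem \ref{thmnodal}. We work with the forms $\tilde a_\gamma$ and $\tilde b_\alpha$ of Remark \ref{rem-generalization}, and we note that the spectral conclusions needed along the way (Theorem \ref{thm1}(\ref{thm1:eigenbasis}), Theorem \ref{thm2}(\ref{thm2:eigenbasis}), and the positivity of the principal eigenfunctions) remain valid by that remark. The Lipschitz hypothesis on $a^{(i)}_{j\ell}$ will be used only at the single step where the Laplacian proof relied on unique continuation.

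\textbf{Step 1 (case $k=1$, and the restrictions).} For $k=1$ the principal eigenfunction has a strict sign, so $Z=\emptyset$ and $\omega=1$. Continuity of eigenfunctions holds because Lemmas \ref{lem:regularityofeigenfunction} and \ref{lem:regularityofeigenfunction2} extend: \cite{Nittka2011Regularity} covers bounded measurable elliptic coefficients. Lemma \ref{lem:nodal-restriction} is purely about $H^1\cap C$ functions and applies verbatim, giving $\mathbf v_j\in H^1$ (resp.\ $H^1_\Sigma$) with $\nabla v_{ji}=\mathbf 1_{D_j\cap\Omega_i}\nabla u_i$.

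\textbf{Step 2 (algebraic part of the argument).} The bulk part of $\tilde a_\gamma(\mathbf v_j,\mathbf v_\ell)$ and of $\tilde a_\gamma(\mathbf v_j,\mathbf r)$ vanishes for $j\ne\ell$, since the integrand $a^{(i)}_{j\ell}\partial_j v\,\partial_\ell w$ is pointwise zero where the gradients have disjoint supports. The interface term is unchanged, so the sign analysis (\eqref{eq76}, \eqref{eq:remainder-sign}) carries over. The estimate $D\le\lambda_k S$ and the derivation that $\mathbf w$ is a weak eigenfunction then go through unchanged, because only symmetry of $\tilde a_\gamma$ was used. For \eqref{eq7}, the orthogonality relations for $\tilde b_\alpha$ hold for the same reason, and $\mathbf w$ is again an eigenfunction.

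\textbf{Step 3 (unique continuation, the main obstacle).} We have $\mathbf w\equiv0$ on the open set $D_{k+1}$, and we must conclude $\mathbf w\equiv0$. For each $i$, $w_i$ solves $-\partial_\ell(a^{(i)}_{j\ell}\partial_j w_i)+(c_i-\lambda_k)w_i=0$ in $\Omega_i$. With Lipschitz symmetric uniformly elliptic principal part and bounded zeroth-order term, the weak unique continuation property holds; this is the frequency-function theorem of Garofalo--Lin \cite{garofalo1986monotonicity,garofalo1987unique}. Hence $w_i\equiv0$ in any $\Omega_i$ meeting $D_{k+1}$. Without loss of generality take $w_1\equiv0$.

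\textbf{Step 4 (passing to the whole domain).} For \eqref{eq6}, testing with $(v_1,0)$, where $v_1=w_2$ on $\Sigma$, gives $w_2=0$ on $\Sigma$. Moreover, testing with $(0,v_2)$ for arbitrary $v_2$ shows that the conormal derivative of $w_2$ on $\Sigma$ equals $-\gamma_2 w_2=0$ weakly. So $w_2$ has zero Cauchy data on $\Sigma$. For \eqref{eq7}, $w_2=0$ on $\Sigma$ follows from $\mathbf w\in H^1_\Sigma$, and the weak transmission relation gives vanishing conormal derivative, since $w_1\equiv0$.

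It is cleaner, however, to avoid gluing coefficients across $\Sigma$, because the piecewise coefficient field need not be Lipschitz on $\Omega$. Instead, extend $w_2$ by zero into a small ball $B$ centred at a point of $\Sigma$. The zero Cauchy data imply that the extension $\widetilde w$ is a weak solution in $B\cap(\Omega_2\cup\Sigma\cup\Omega_1)$ of an equation whose coefficients are $A_2$ extended Lipschitzly across $\Sigma$; the part in $\Omega_1$ is irrelevant since $\widetilde w=0$ there. A Lipschitz extension of $A_2$ beyond $\overline{\Omega_2}$, preserving ellipticity near $\Sigma$, exists by McShane extension followed by symmetrization, after shrinking $B$. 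Unique continuation in $B$ then gives $w_2\equiv0$ near that point of $\Sigma$, and unique continuation inside $\Omega_2$ gives $w_2\equiv0$. Thus $\mathbf w=0$, which is a contradiction, and $\omega\le k$ in both cases.
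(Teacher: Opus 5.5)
Your proposal is correct and follows the paper's proof. It keeps the variational part unchanged, uses interior Garofalo--Lin unique continuation to kill one component, gets zero trace on $\Sigma$ by testing with $(v_1,0)$ (or directly from $H^1_\Sigma$), and applies unique continuation to $(0,w_2)$ with a Lipschitz extension of $A_2$ across $\Sigma$. The differences are minor: you localize to a small ball at a point of $\Sigma$, where continuity gives ellipticity, and then propagate inside $\Omega_2$, whereas the paper extends $A_2$ to all of $\mathbb{R}^N$ and projects onto $\mathcal{K}$ to keep it uniformly elliptic on $\Omega$; your explicit check of the vanishing conormal derivative also fills in the paper's ``it is easy to see''.
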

\begin{proof}[{\bf Proof.}]
The variational part of the proof of Theorem \ref{thmnodal}
is unchanged when $a_\gamma$ and $b_\alpha$ are replaced
by $\tilde a_\gamma$ and $\tilde b_\alpha$, respectively.
We explain the unique-continuation step.
Write $\eta=\lambda_k$ for \eqref{eq6} and
$\eta=\mu_k$ for \eqref{eq7}.
The auxiliary weak eigenfunction $\mathbf w=(w_1,w_2)$
vanishes on a nonempty open subset of at least one
subdomain. Interior unique continuation for Lipschitz
principal coefficients
\cite{garofalo1986monotonicity,garofalo1987unique}
therefore implies that one component vanishes identically.
It suffices to consider the case $w_1\equiv0$ in $\Omega_1$;
the other case is analogous.

For \eqref{eq6}, testing with $(v_1,0)$, where
$v_1|_\Sigma=w_2|_\Sigma$, gives
$\int_\Sigma w_2^2\,\mathrm ds=0$, exactly as in
\eqref{eq:w5.8}. For \eqref{eq7}, the condition
$\mathbf w\in H^1_\Sigma$ directly gives
$w_2|_\Sigma=w_1|_\Sigma=0$.
Thus, in either case, $\mathbf{w}=(0,w_2)\in H^1(\Omega)$.


Choose constants $0<\theta\le\Theta$ such that
$\theta I\le A_2(x)\le\Theta I$ on
$\overline{\Omega_2}$.
Extend the entries of $A_2$ from $\overline{\Omega_2}$
to Lipschitz functions on $\mathbb R^N$, preserving
symmetry, and denote the resulting matrix field by $B$.
In the space of real symmetric matrices equipped with
the Frobenius norm, let $P_{\mathcal K}$ be the metric
projection onto the closed convex set
\[
\mathcal K=\{M=M^{\mathsf T}:\theta I\le M\le\Theta I\}.
\]
Since $P_{\mathcal K}$ is nonexpansive and fixes every
element of $\mathcal K$, the field
$\widetilde A_2=P_{\mathcal K}\circ B$ is Lipschitz,
symmetric, and uniformly elliptic, and agrees with
$A_2$ on $\overline{\Omega_2}$.

%
It is easy to see that 
\begin{equation*}
	-\operatorname{div}
(\widetilde A_2\nabla\mathbf{w})
+q \mathbf{w}=0
\qquad\text{weakly in }\Omega,
\end{equation*}
where $q(x)=c(x)-\eta$.
Unique continuation, applied to this equation on the
connected domain $\Omega$, gives
$\mathbf{w}\equiv0$, which is a contradiction and hence completes the proof.
\end{proof}

When  the coefficients $a_{j\ell}$ are merely bounded and measurable, unique
continuation may fail.  Thus, the argument used to obtain the usual Courant
bound is no longer available.  The Poincar\'{e}-type characterization  nevertheless yields the
following multiplicity-dependent bound.

Since the spectral conclusions in Theorems \ref{thm1} and \ref{thm2} extend
to \eqref{eq6} and \eqref{eq7} by Remark \ref{rem-generalization}, every
eigenvalue has finite multiplicity.  For $k\geq1$, define
\[
\Phi(k):=\max\{j:\lambda_j=\lambda_k\},
\qquad
\Psi(k):=\max\{j:\mu_j=\mu_k\}.
\]
Equivalently,
\[
\lambda_k=\lambda_{\Phi(k)}<\lambda_{\Phi(k)+1},
\qquad
\mu_k=\mu_{\Psi(k)}<\mu_{\Psi(k)+1}.
\]

\begin{prop}\label{prop1}
Assume that the coefficients in \eqref{eq6} and \eqref{eq7} satisfy
the hypotheses of Remark \ref{rem-generalization}.
Let $\mathbf{u}$ and $\hat{\mathbf{u}}$ be  real-valued eigenfunctions
corresponding to $\lambda_k$ and $\mu_k$, respectively. Then
\[
\omega(\mathbf{u})\leq \Phi(k),
\qquad
\omega(\hat{\mathbf{u}})\leq \Psi(k).
\]
\end{prop}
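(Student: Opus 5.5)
We argue by contradiction, reusing the variational part of the proof of Theorem \ref{thmnodal}, and replace the unique-continuation step by a dimension count. Consider \eqref{eq6}; the case of \eqref{eq7} is identical with $\tilde b_\alpha$, $\langle\cdot,\cdot\rangle_\alpha$ and $\Psi(k)$. Set $m:=\Phi(k)$ and suppose $\mathbf u$ has at least $m+1$ nodal domains $D_1,\dots,D_{m+1}$. As in \eqref{eq78}, define the restrictions $\mathbf v_j$ for $j=1,\dots,m+1$. Lemma \ref{lem:nodal-restriction} needs only continuity and $H^1$ regularity, which hold by the extended Lemma \ref{lem:regularityofeigenfunction}. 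Hence $\mathbf v_j\in H^1$, the $\mathbf v_j$ are nonzero, and they are mutually $\gamma$-orthogonal. So $V:=\operatorname{span}\{\mathbf v_1,\dots,\mathbf v_{m+1}\}$ has dimension $m+1$.

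Next we show $R_\gamma(\mathbf w)\le\lambda_k$ for every nonzero $\mathbf w=\sum_j c_j\mathbf v_j\in V$. This is the same computation as in the proof of Theorem \ref{thmnodal}, with $\tilde a_\gamma$ in place of $a_\gamma$ and $m+1$ pieces in place of $k$. The bulk terms of $\tilde a_\gamma(\mathbf v_j,\mathbf v_\ell)$ with $j\neq\ell$, and of $\tilde a_\gamma(\mathbf v_j,\mathbf r)$, vanish by the gradient formula \eqref{eq:w5.3}. The interface sign analysis gives $\tilde a_\gamma(\mathbf v_j,\mathbf v_\ell)\ge0$ and $\tilde a_\gamma(\mathbf v_j,\mathbf r)\ge0$, where $\mathbf r=\mathbf u-\sum_{j\le m+1}\mathbf v_j$. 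The inequality $2c_jc_\ell\le c_j^2+c_\ell^2$ then yields $\tilde a_\gamma(\mathbf w,\mathbf w)\le\sum_j c_j^2\tilde a_\gamma(\mathbf v_j,\mathbf u)=\lambda_k(\mathbf w,\mathbf w)_\gamma$. For \eqref{eq7} the cross terms vanish identically and we get equality.

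By the Poincar\'e-type characterization, which holds for \eqref{eq6} by Remark \ref{rem-generalization}, we obtain
\[
\lambda_{m+1}=\min_{V_{m+1}\subset H^1}\max_{\mathbf u\in V_{m+1}\setminus\{0\}}R_\gamma(\mathbf u)\le\max_{V\setminus\{0\}}R_\gamma\le\lambda_k=\lambda_{\Phi(k)}.
\]
This contradicts $\lambda_{\Phi(k)}<\lambda_{\Phi(k)+1}$. Hence $\omega(\mathbf u)\le\Phi(k)$. If $\mathbf u$ has infinitely many nodal domains, we simply pick any $m+1$ of them, so the argument still applies.

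The only delicate point is the sign analysis of the interface cross terms with $m+1$ pieces, since $\mathbf r$ now absorbs the remaining domains. The argument from Theorem \ref{thmnodal} is pointwise on $\Sigma$ and does not depend on the number of pieces, so it carries over unchanged. No unique continuation is needed, which is why the hypotheses of Remark \ref{rem-generalization} suffice.
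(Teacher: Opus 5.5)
Your proposal is correct and follows the paper's own argument: restrict $\mathbf u$ to $\Phi(k)+1$ nodal domains, use the pointwise sign analysis of the interface cross terms (which simply vanish for the local form $\tilde b_\alpha$) to get $\tilde a_\gamma(\mathbf w,\mathbf w)\le\lambda_k(\mathbf w,\mathbf w)_\gamma$ on their span, and contradict $\lambda_{\Phi(k)}<\lambda_{\Phi(k)+1}$ via the Poincar\'e-type characterization, with no unique continuation needed. One cosmetic point: the Rayleigh quotient you write as $R_\gamma$ should be the one built from $\tilde a_\gamma$, which the paper denotes $\widetilde R_\gamma$.
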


\begin{proof}[{\bf Proof.}]
We give the proof for $\mathbf{u}$; the argument for $\hat{\mathbf{u}}$  is similar.
 Suppose, to the contrary, that $\mathbf u$
has more than $\Phi(k)$ nodal domains, set $n=\Phi(k)+1$, and choose
distinct nodal domains $D_1,\ldots,D_n$. Let $\mathbf{v}_j$ be the
restriction of $\mathbf{u}$ to $D_j$, extended by zero outside $D_j$, as
in \eqref{eq78}.  The standard nodal-domain
restriction lemma gives $\mathbf{v}_j\in H^1$.  Moreover, the
$\mathbf{v}_j$ are nonzero and have pairwise disjoint supports; hence
\[
W:=\operatorname{span}\{\mathbf{v}_1,\ldots,\mathbf{v}_n\}
\quad\text{satisfies}\quad \dim W=n.
\]
Set $\mathbf r=\mathbf u-\sum_{j=1}^n\mathbf v_j\in H^1$.

Let
\[
\widetilde R_\gamma(\mathbf{v})
:=\frac{\tilde a_\gamma(\mathbf{v},\mathbf{v})}
{(\mathbf{v},\mathbf{v})_\gamma}
\qquad (\mathbf{v}\ne0).
\]
The calculation in the proof of Theorem \ref{thmnodal} is unchanged for
$\tilde a_\gamma$:
for $j\ne\ell$,
\[
(\mathbf{v}_j,\mathbf{v}_\ell)_\gamma=0,
\qquad
\tilde a_\gamma(\mathbf{v}_j,\mathbf{v}_\ell)\geq0.
\]
The same calculation with the complementary remainder gives
$\tilde a_\gamma(\mathbf v_j,\mathbf r)\geq0$. Thus, if
$\mathbf{w}=\sum\limits_{j=1}^n c_j\mathbf{v}_j\in W$, then
\begin{eqnarray*}
 \tilde a_\gamma(\mathbf{w},\mathbf{w}) &\le& \sum_{j=1}^n \sum_{\ell=1}^n \frac{1}{2} \left[(c_j)^2 + (c_\ell)^2\right] \tilde a_{\gamma}(\mathbf{v}_j, \mathbf{v}_\ell)=\sum_{j=1}^n \sum_{\ell=1}^n  (c_j)^2 \tilde a_{\gamma}(\mathbf{v}_j, \mathbf{v}_\ell)\\
&\le &\sum_{j=1}^n (c_j)^2 \tilde a_{\gamma}\left(\mathbf{v}_j, \sum_{\ell=1}^n \mathbf{v}_\ell+\mathbf r\right)=\sum_{j=1}^n (c_j)^2 \tilde a_{\gamma}(\mathbf{v}_j,\mathbf{u})=\sum_{j=1}^{n}(c_j)^2\lambda_k(\mathbf{v}_j,\mathbf{u})_\gamma=\lambda_k (\mathbf{w},\mathbf{w})_\gamma\,.
\end{eqnarray*}
Consequently, $\widetilde R_\gamma(\mathbf{w})\leq\lambda_k$ for every
$\mathbf{w}\in W\setminus\{0\}$.  The Poincar\'e-type characterization therefore gives
\[
\lambda_n\leq
\max_{\mathbf{w}\in W\setminus\{0\}}\widetilde R_\gamma(\mathbf{w})
\leq\lambda_k.
\]
This contradicts $n=\Phi(k)+1$ and the definition of $\Phi(k)$.

For \eqref{eq7}, if $\hat{\mathbf u}$ had more than $\Psi(k)$ nodal
domains, we would choose $n=\Psi(k)+1$ of them. Repeating the above argument
with $\tilde b_\alpha$ and its Rayleigh quotient would give
$\mu_n\leq\mu_k$, contradicting the definition of $\Psi(k)$.
\end{proof}

After a constant shift if necessary, our weighted forms fall within the positivity-preserving framework of Keller and Schwarz \cite{KellerSchwarz2020}. Their multiplicity-dependent Courant bounds concern nodal domains defined through restricted forms; relating these to Definition \ref {def-nodal} requires further verification. Proposition \ref{prop1} instead gives a direct proof for geometric nodal domains. For the stronger bound in Theorem \ref{thmnodal}, a further distinction arises: the form associated with \eqref{evp2} is local, whereas the jump term in \eqref{evp1} gives a nonlocal coupling between the two sides of the interface. Thus, the strong Courant theorem for local forms in \cite{KellerSchwarz2020} does not apply directly to \eqref{evp1}; our proof combines the sign of the interface cross terms with unique continuation.

\section{Illustrative Examples}\label{sec6}
We demonstrate, via one‑dimensional examples, our results on the dependence of eigenvalues on interface parameters (Theorems \ref{thm3}-\ref{thm6}) and on the number of nodal domains of eigenfunctions (Theorem \ref{thmnodal}).  
Set $\Omega_1=(-1,0)$, $\Omega_2=(0,2)$, $\Sigma=\{0\}$, $\Gamma_1=\{-1\}$, $\Gamma_2=\{2\}$, and assume $c_i(x)\equiv 0$ on $\Omega_i$.
Then Problem \eqref{evp1} reduces to
\begin{equation}\label{eq:6.1}
\begin{cases}
-u_i''=\lambda u_i,\quad x\in\Omega_i,\\[2pt]
u_i'(0)=\gamma_i\big(u_2(0)-u_1(0)\big), \\[2pt]
u_1'(-1)=u_2'(2)=0,
\end{cases}i=1,2.
\end{equation}
The spectrum consists of two families of eigenvalues.
First,
\begin{equation}\label{eq:6.2}
\lambda_k^{(1)}=(k\pi)^2,\qquad k=0,1,2,\dots,
\end{equation}
with corresponding eigenfunctions
\[
u_k^{(1)}(x)=\cos(k\pi x),\quad x\in[-1,2].
\]
Second, $\lambda_k^{(2)}=(\omega_k)^2$ for $k=0,1,2,\dots$, where $\omega_k$ is the unique root of
\begin{equation}\label{eq:6.3}
\gamma_1\cot(\omega)+\gamma_2\cot(2\omega)=\omega,\qquad \omega\in\bigl(\tfrac{k\pi}{2},\tfrac{(k+1)\pi}{2}\bigr).
\end{equation}
The associated eigenfunctions read
\[
u_k^{(2)}(x)=
\begin{cases}
\big[\omega_k\sin(2\omega_k)-\gamma_2\cos(2\omega_k)\big]\cos\big(\omega_k(x+1)\big),&x\in[-1,0),\\[2pt]
-\gamma_2\cos(\omega_k)\cos\big(\omega_k(x-2)\big),&x\in(0,2].
\end{cases}
\]
Collecting all eigenvalues in increasing order yields
\begin{equation}\label{eq:6.4}
0,\;(\omega_0)^2,\;(\omega_1)^2,\;\pi^2,\;(\omega_2)^2,\;(\omega_3)^2,\;(2\pi)^2,\dots.
\end{equation}

\bigskip

Problem \eqref{evp2} takes the form
\begin{equation}\label{eq:6.5}
\begin{cases}
-u_i''=\mu u_i,\quad x\in\Omega_i,\\[2pt]
u_1'(0)=\alpha\,u_2'(0),\quad u_1(0)=u_2(0),\\[2pt]
u_1'(-1)=u_2'(2)=0,
\end{cases} i=1,2.
\end{equation}
Again there are two families of eigenvalues. The first family is
\begin{equation}\label{eq:6.6}
\mu_k^{(1)}=(k\pi)^2,\qquad k=0,1,2,\dots,
\end{equation}
with eigenfunctions
\[
v_k^{(1)}(x)=\cos(k\pi x),\quad x\in[-1,2].
\]
The second family is $\mu_k^{(2)}=(\eta_k)^2$, $k=0,1,2,\dots$, where $\eta_k$ is the unique solution of
\begin{equation}\label{eq:6.7}
\cos^2(\eta)=\frac1{2(1+\alpha)},\qquad \eta\in\bigl(\tfrac{k\pi}{2},\tfrac{(k+1)\pi}{2}\bigr).
\end{equation}
Its eigenfunctions are
\[
v_k^{(2)}(x)=
\begin{cases}
\alpha\cos\big(\eta_k(x+1)\big),&x\in[-1,0),\\[2pt]
-(1+\alpha)\cos(\eta_k)\cos\big(\eta_k(x-2)\big),&x\in(0,2].
\end{cases}
\]
Arranging all eigenvalues in increasing order gives
\begin{equation}\label{eq:6.8}
0,\;(\eta_0)^2,\;(\eta_1)^2,\;\pi^2,\;(\eta_2)^2,\;(\eta_3)^2,\;(2\pi)^2,\dots.
\end{equation}

\bigskip

\paragraph{Observations}
\begin{enumerate}[label=(\roman*)]
\item For problem \eqref{eq:6.1}, every eigenvalue in \eqref{eq:6.2} is independent of $\gamma=(\gamma_1,\gamma_2)$.
Each eigenvalue from \eqref{eq:6.3} depends continuously on $\gamma$ and is increasing in $\gamma_2$ under the scaling $\gamma_1=\alpha\gamma_2$ for fixed $\alpha$.
These facts illustrate Theorem \ref{thm3}(i) and (iii).

\item For problem \eqref{eq:6.5}, eigenvalues in \eqref{eq:6.6} do not depend on $\alpha$, whereas each eigenvalue from \eqref{eq:6.7} depends continuously on $\alpha$.
This corresponds to Theorem \ref{thm3}(ii).

\item Multiply equation \eqref{eq:6.3} by $\frac1{\gamma_2}\sin(2\omega)$:
\[
\frac{\gamma_1}{\gamma_2}\,2\cos^2\omega
+2\cos^2\omega-1
=\frac{\omega}{\gamma_2}\sin(2\omega),
\qquad
\omega\in\Bigl(\tfrac{k\pi}{2},\tfrac{(k+1)\pi}{2}\Bigr).
\]
Let $\gamma_1,\gamma_2\to\infty$ with $\gamma_1/\gamma_2\to\alpha$. In the limit, $\omega$ satisfies
\[
2\alpha\cos^2\omega+2\cos^2\omega-1=0
\quad\Longleftrightarrow\quad
\cos^2\omega=\frac1{2(1+\alpha)},\qquad
\omega\in\Bigl[\tfrac{k\pi}{2},\tfrac{(k+1)\pi}{2}\Bigr].
\]
Comparing with \eqref{eq:6.7}, we obtain $\omega_k\to\eta_k$ as $\gamma_1,\gamma_2\to\infty$ and $\gamma_1/\gamma_2\to\alpha$. This illustrates the limit asserted in Theorem \ref{thm4}.

\item The decoupled Neumann problems on $\Omega_i$ read
\[
\begin{cases}
-u_1''=\sigma u_1,\quad x\in\Omega_1,\\
u_1'(-1)=u_1'(0)=0,
\end{cases}
\qquad
\begin{cases}
-u_2''=\sigma u_2,\quad x\in\Omega_2,\\
u_2'(0)=u_2'(2)=0.
\end{cases}
\]
Their eigenvalues are
\begin{equation}\label{eq:6.9}
\sigma_k^N(\Omega_1)=(k\pi)^2,\qquad k=0,1,2,\dots,
\end{equation}
with eigenfunctions $u_k(x)=\cos(k\pi x)$; and
\begin{equation}\label{eq:6.10}
\sigma_k^N(\Omega_2)=\bigl(\tfrac{k\pi}{2}\bigr)^2,\qquad k=0,1,2,\dots,
\end{equation}
with eigenfunctions $u_k(x)=\cos\bigl(\tfrac{k\pi}{2}x\bigr)$.

Now take $\gamma_1,\gamma_2\to 0$ in \eqref{eq:6.3}. One finds $\omega_k\to \frac{k\pi}{2}$.
Together with \eqref{eq:6.4}, \eqref{eq:6.9}, and \eqref{eq:6.10}, this illustrates the limit asserted in Theorem \ref{thm5}.

\item We illustrate the two limits in Theorem \ref{thm6} separately for the present example.

\smallskip
\noindent\emph{The limit $\alpha\to0$.} In this regime, the limiting
spectrum is obtained by merging 
the Neumann spectrum on $\Omega_1$ with  the
Dirichlet--Neumann spectrum on $\Omega_2$. The former is given in
\eqref{eq:6.9}. For the latter, the problem on $\Omega_2$ is
\[
\begin{cases}
-u_2''=\sigma u_2, & x\in\Omega_2,\\
u_2(0)=0,\quad u_2'(2)=0,
\end{cases}
\]
with eigenvalues and eigenfunctions
\begin{equation}\label{eq:6.12}
\sigma_m^{DN}(\Omega_2)=\Bigl(\tfrac{(2m+1)\pi}{4}\Bigr)^2,
\qquad
u_m(x)=\sin\bigl(\tfrac{(2m+1)\pi}{4}x\bigr),
\qquad m=0,1,2,\ldots.
\end{equation}
Letting $\alpha\to0$ in \eqref{eq:6.7} gives
\[
\cos^2\eta=\tfrac12,
\qquad
\eta\in\Bigl[\tfrac{m\pi}{2},\tfrac{(m+1)\pi}{2}\Bigr],
\]
and hence
\[
\eta_m\to\tfrac{(2m+1)\pi}{4}.
\]
Thus the fixed family \eqref{eq:6.6} yields the Neumann spectrum on
$\Omega_1$, while the second family yields the Dirichlet--Neumann spectrum
on $\Omega_2$. In view of the ordering \eqref{eq:6.8}, this illustrates the first limit asserted in Theorem \ref{thm6}.

\smallskip
\noindent\emph{The limit $\alpha\to\infty$.} Here the limiting spectrum is
the merge of the Dirichlet--Neumann spectrum on $\Omega_1$ and the Neumann
spectrum on $\Omega_2$, the latter being given in \eqref{eq:6.10}. The
Dirichlet--Neumann problem on $\Omega_1$ has eigenvalues and eigenfunctions
\begin{equation}\label{eq:6.11}
\sigma_m^{DN}(\Omega_1)=\Bigl(\tfrac{(2m+1)\pi}{2}\Bigr)^2,
\qquad
u_m(x)=\sin\bigl(\tfrac{(2m+1)\pi}{2}x\bigr),
\qquad m=0,1,2,\ldots.
\end{equation}
Since \eqref{eq:6.7} gives $\cos^2\eta_m\to0$, we have
\[
\eta_{2m},\ \eta_{2m+1}\ \longrightarrow\
\tfrac{(2m+1)\pi}{2},
\qquad m=0,1,2,\ldots.
\]
Therefore, the second family in \eqref{eq:6.8} supplies every odd
half-integer frequency twice: once for the Dirichlet--Neumann spectrum on
$\Omega_1$ and once for the odd modes of the Neumann spectrum on $\Omega_2$.
The fixed family \eqref{eq:6.6} supplies the remaining even modes of the
Neumann spectrum on $\Omega_2$. Consequently, 
the ordered eigenvalue sequence in \eqref{eq:6.8} converges to the merged spectrum asserted in
the second part of Theorem \ref{thm6}.
\item From \eqref{eq:6.4} and the associated eigenfunctions, we observe that the $k$-th eigenfunction of problem \eqref{eq:6.1} possesses exactly $k$ nodal domains, with the convention
    that a sign change across $x=0$ is counted when the two one-sided values have opposite signs.
For problem \eqref{eq:6.5}, it follows from \eqref{eq:6.8} and its corresponding eigenfunctions that its $k$-th eigenfunction has precisely $k$ nodal domains. Fig. \ref{fig:6.1} and Fig. \ref{fig:6.5}  visualize the nodal  domains of the first six eigenfunctions  of \eqref{eq:6.1} and \eqref{eq:6.5}, respectively. These observations are consistent with Theorem \ref{thmnodal}.

\begin{figure}[htbp]
  \centering
  \begin{subfigure}[b]{0.32\textwidth}
    \centering
    \includegraphics[width=\linewidth]{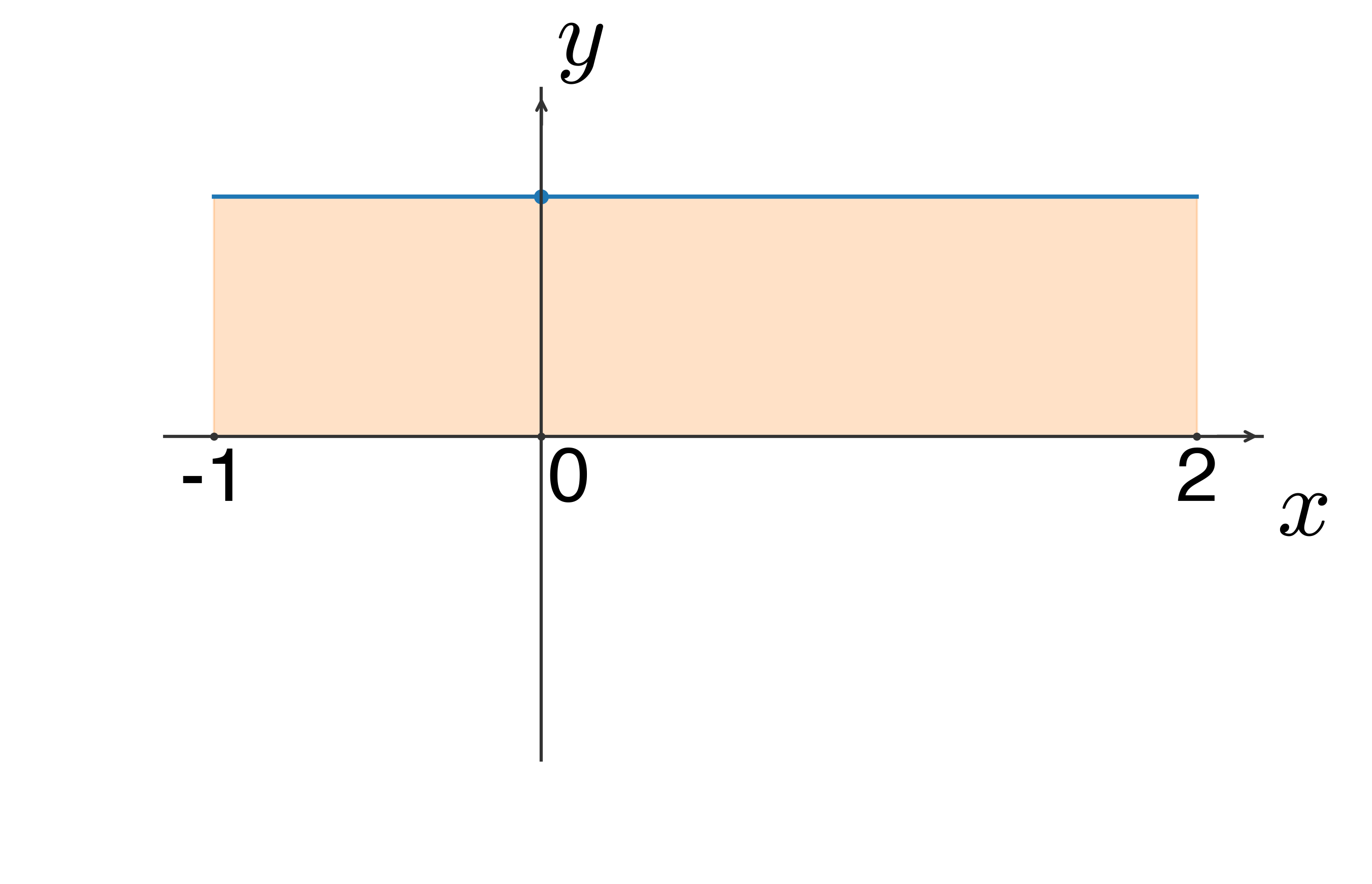} 
\caption{$\lambda_1=0,\, y\equiv 1$}
    \label{fig:sub_a}
  \end{subfigure}\hfill
  \begin{subfigure}[b]{0.32\textwidth}
    \centering
    \includegraphics[width=\linewidth]{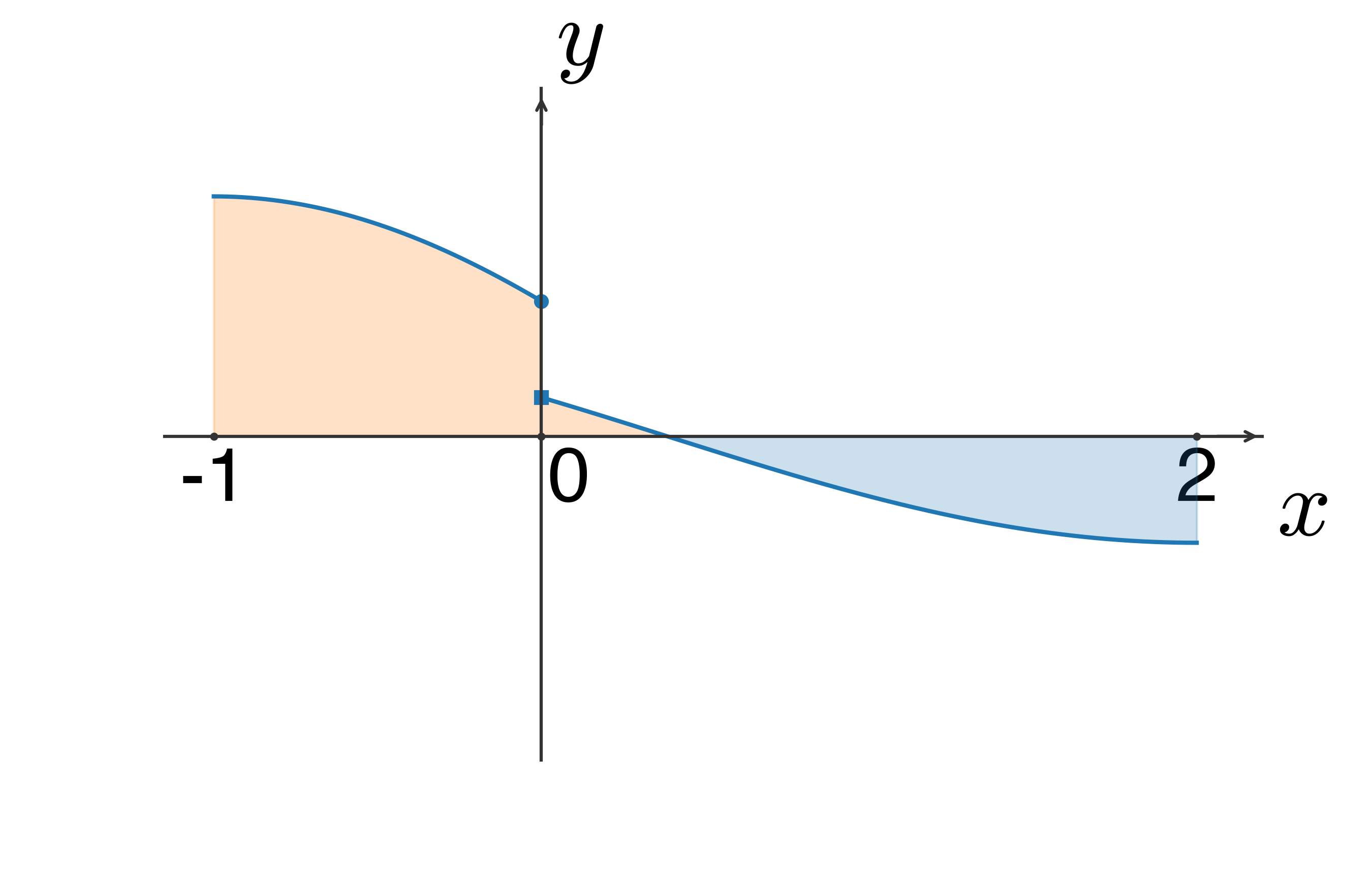}
\caption{$\lambda_2 = (\omega_0)^2,\, y=u_0^{(2)}(x)$ }
    \label{fig:sub_b}
  \end{subfigure}\hfill
  \begin{subfigure}[b]{0.32\textwidth}
    \centering
    \includegraphics[width=\linewidth]{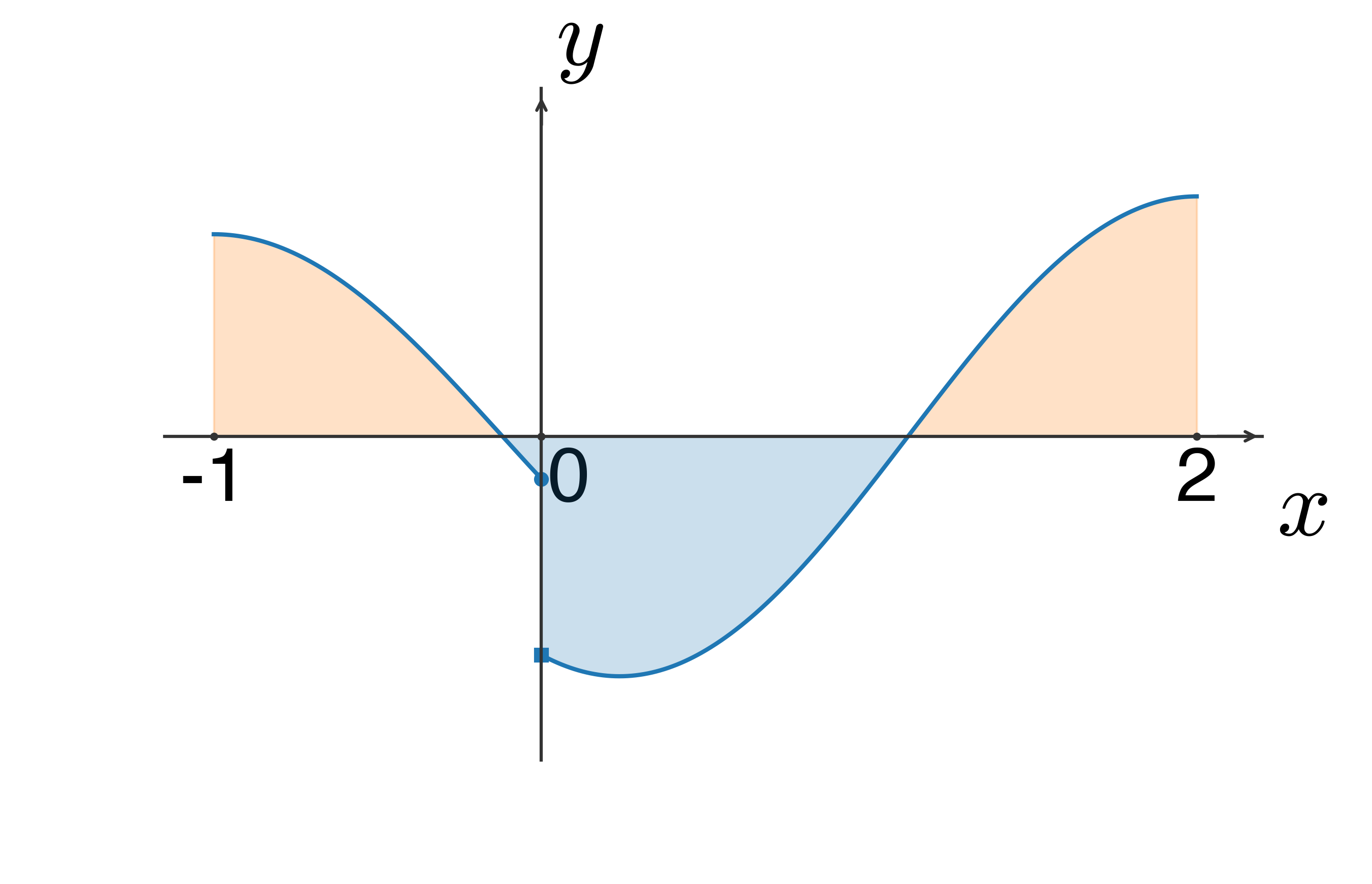}
\caption{$\lambda_3  = (\omega_1)^2,\, y=u_1^{(2)}(x)$ }
    \label{fig:sub_c}
  \end{subfigure}

  \vspace{1.5ex} 

  \begin{subfigure}[b]{0.32\textwidth}
    \centering
    \includegraphics[width=\linewidth]{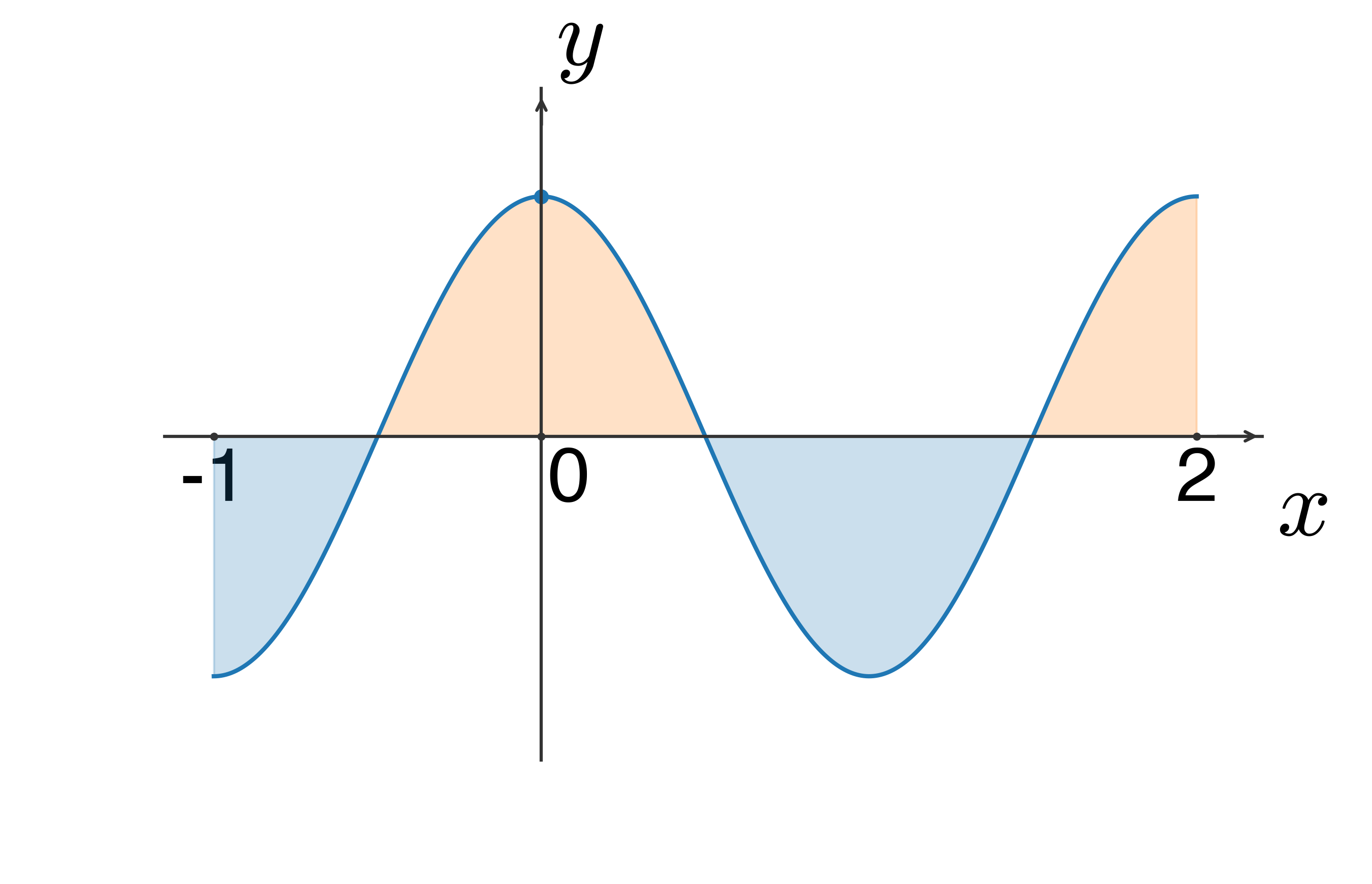}
\caption{$\lambda_4  = \pi^2,\, y=\cos (\pi x)$}
    \label{fig:sub_d}
  \end{subfigure}\hfill
  \begin{subfigure}[b]{0.32\textwidth}
    \centering
    \includegraphics[width=\linewidth]{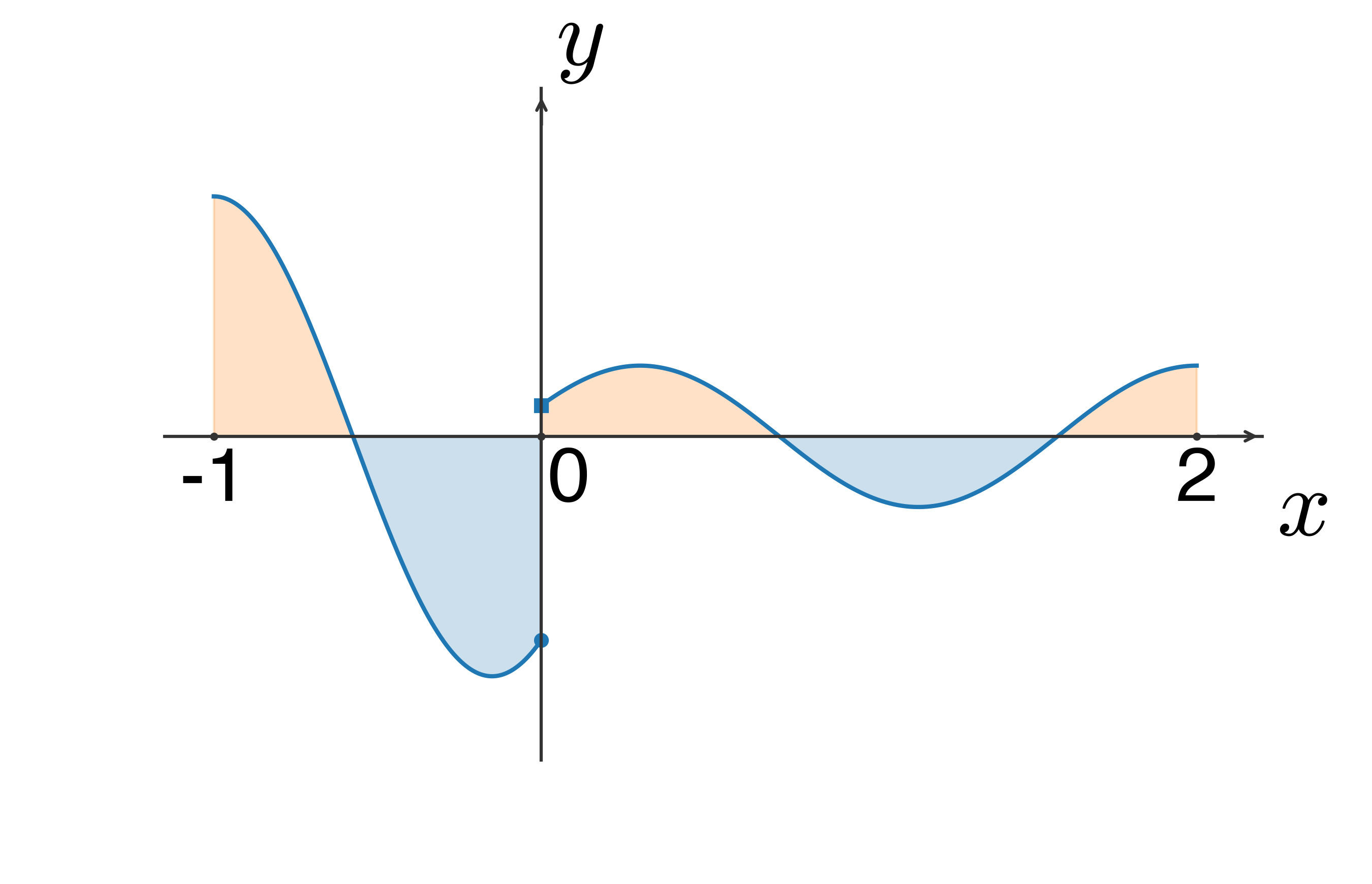}
\caption{$\lambda_5  = (\omega_2)^2,\, y=u_2^{(2)}(x)$}
    \label{fig:sub_e}
  \end{subfigure}\hfill
  \begin{subfigure}[b]{0.32\textwidth}
    \centering
    \includegraphics[width=\linewidth]{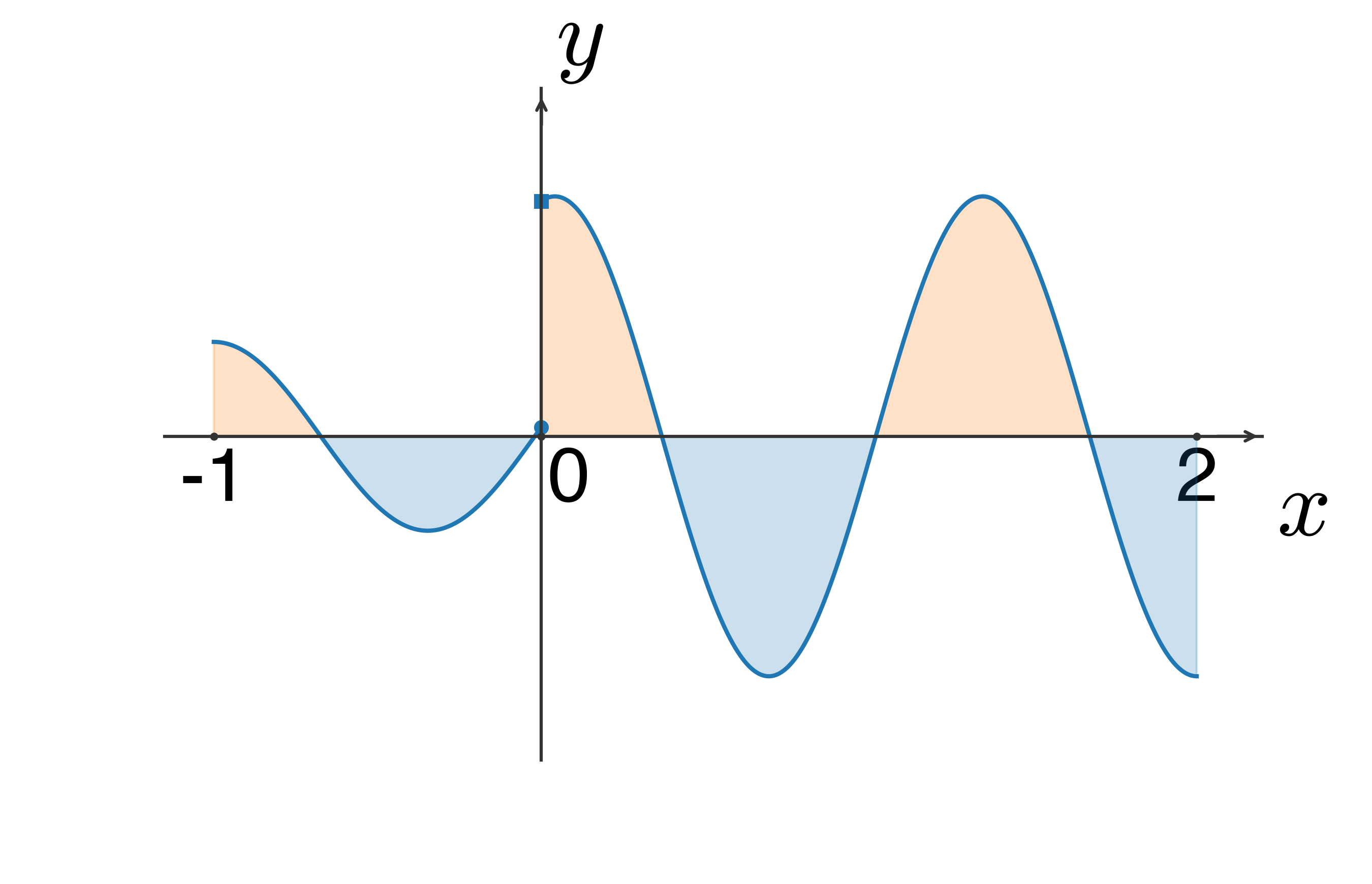}
\caption{$\lambda_6  = (\omega_3)^2,\, y= u_3^{(2)}(x)$ }
    \label{fig:sub_f}
  \end{subfigure}

  \caption{Plots of  the first six eigenfunctions of \eqref{eq:6.1} with $\gamma_1=2,\gamma_2=1$}
  \label{fig:6.1}
\end{figure}

\begin{figure}[h]
  \centering
  \begin{subfigure}[b]{0.32\textwidth}
    \centering
    \includegraphics[width=\linewidth]{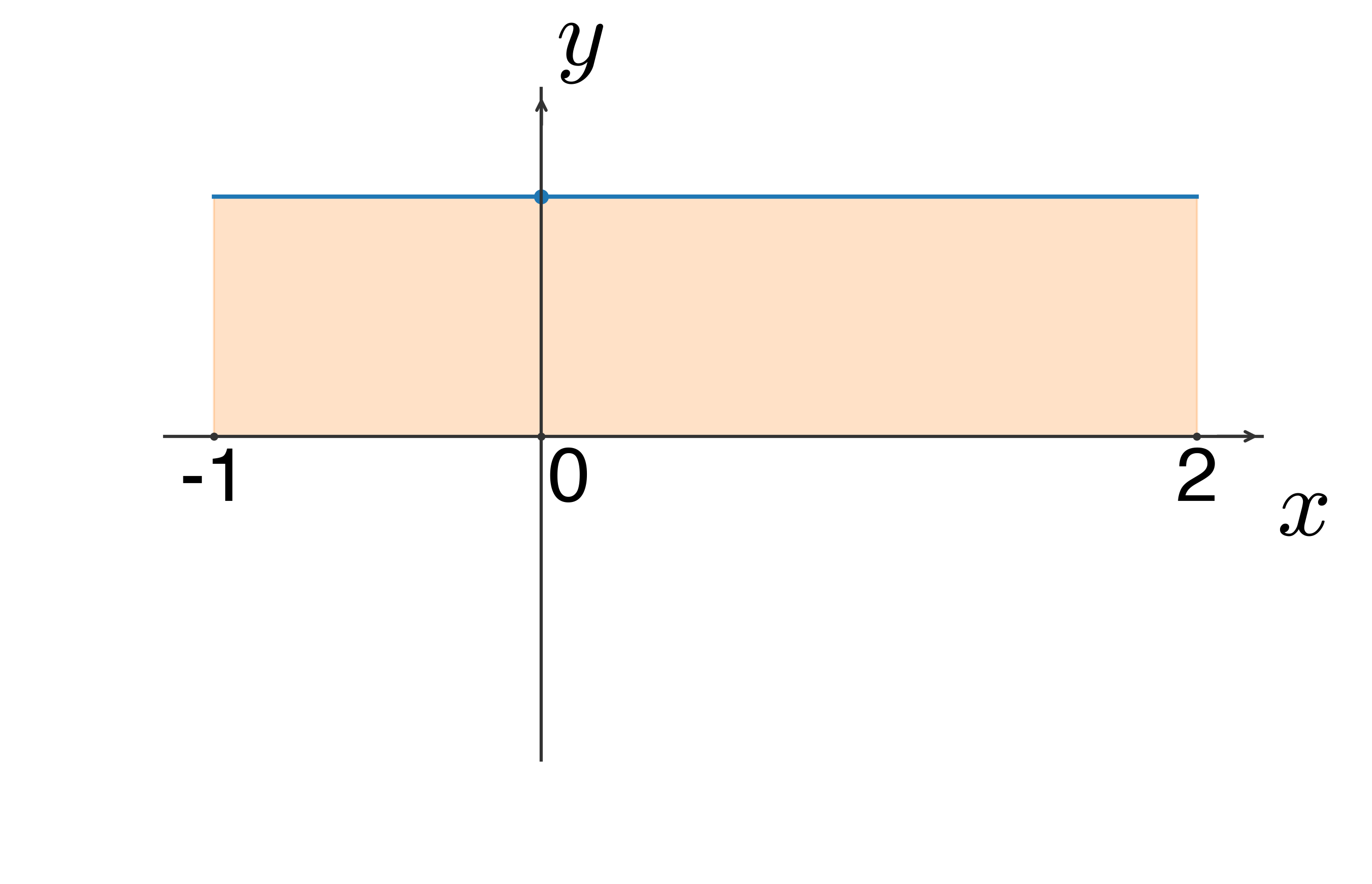} 
\caption{$\mu_1=0,\, y\equiv 1$ }
    \label{fig:6.5_sub_a}
  \end{subfigure}\hfill
  \begin{subfigure}[b]{0.32\textwidth}
    \centering
    \includegraphics[width=\linewidth]{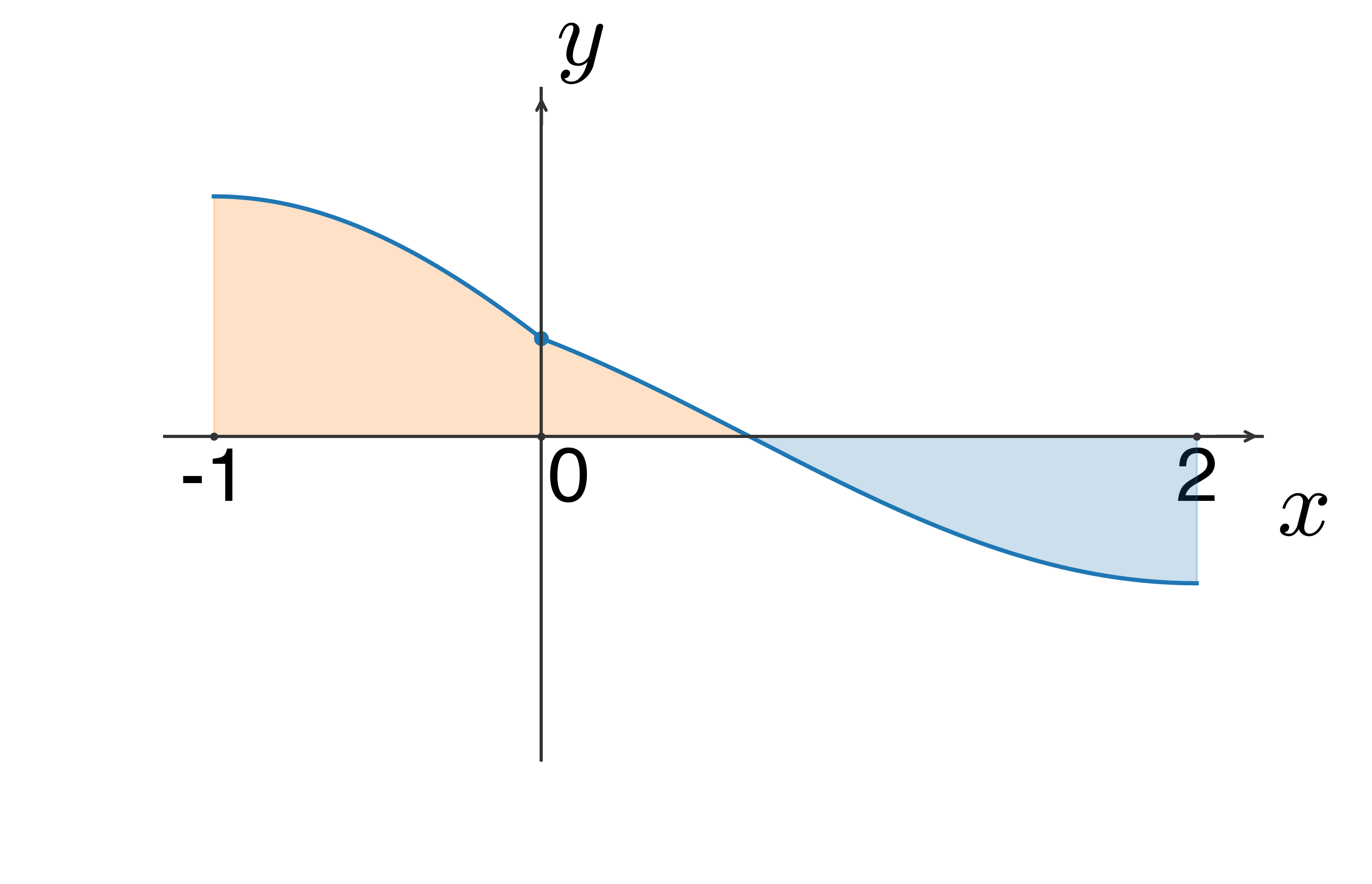}
\caption{$\mu_2 = (\eta_0)^2,\, y=v_0^{(2)} (x)$}
    \label{fig:6.5_sub_b}
  \end{subfigure}\hfill
  \begin{subfigure}[b]{0.32\textwidth}
    \centering
    \includegraphics[width=\linewidth]{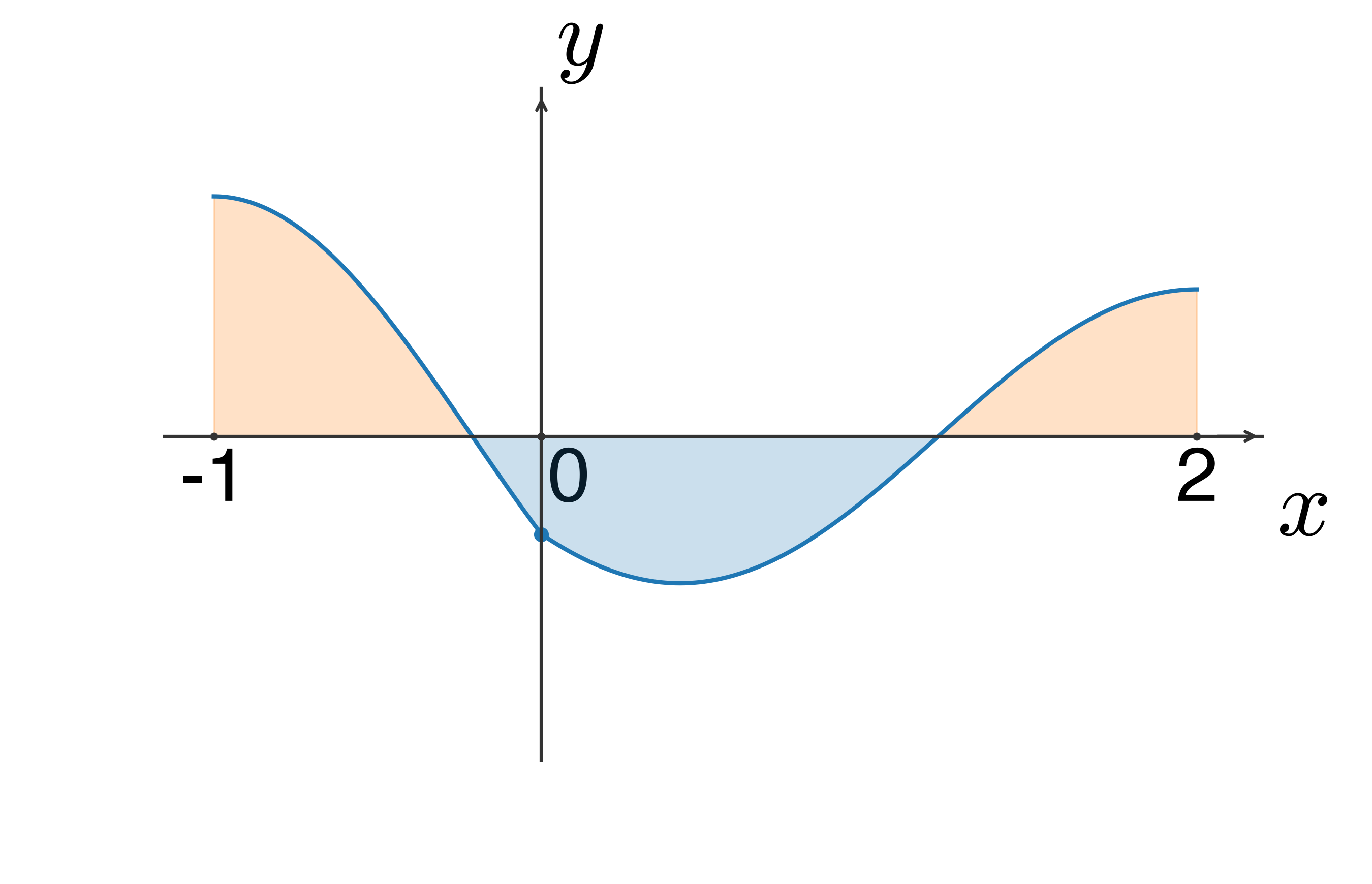}
\caption{$\mu_3  = (\eta_1)^2,\, y=v_1^{(2)}(x)$ }
    \label{fig:6.5_sub_c}
  \end{subfigure}

  \vspace{1.5ex} 

  \begin{subfigure}[b]{0.32\textwidth}
    \centering
    \includegraphics[width=\linewidth]{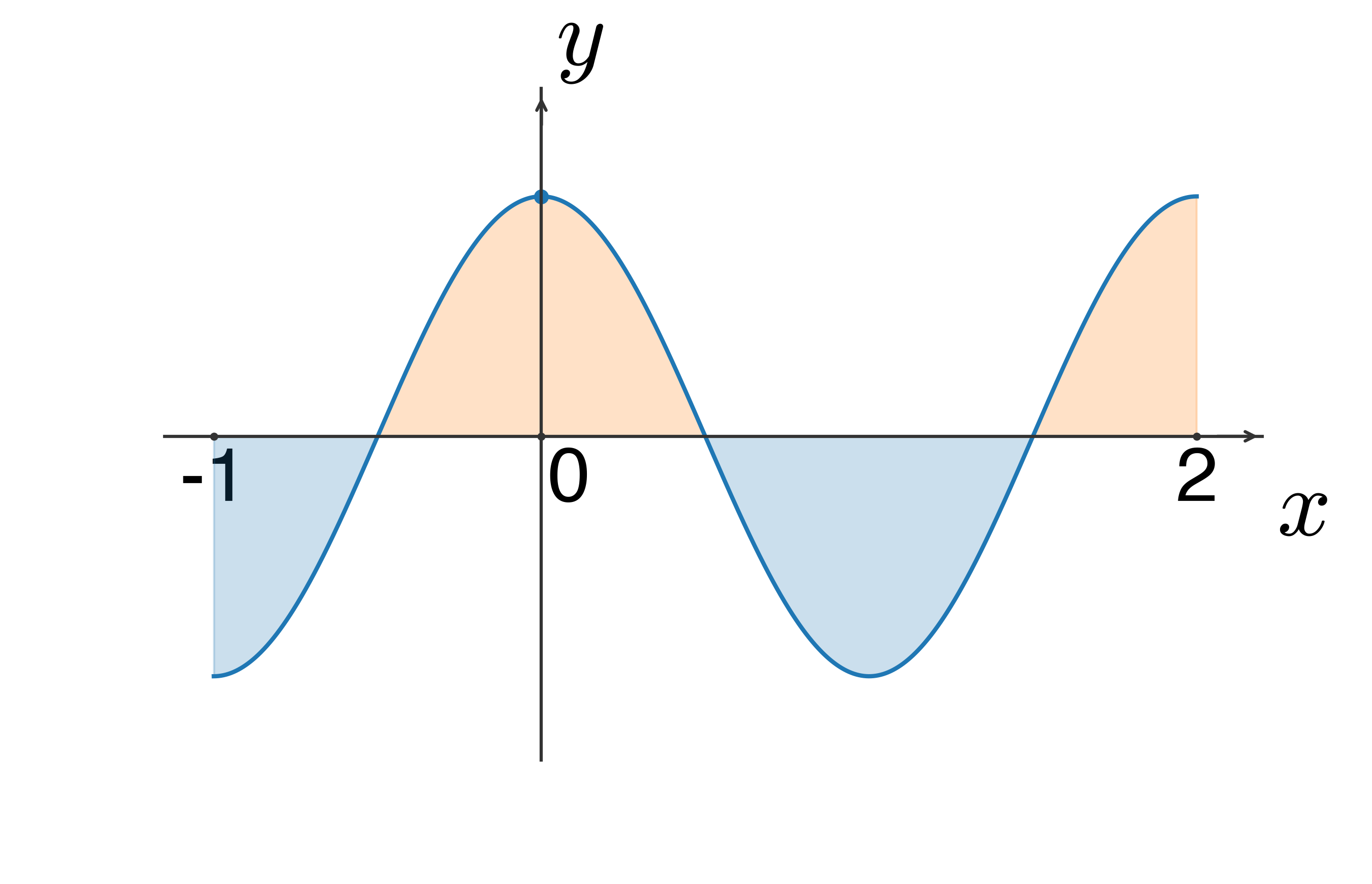}
\caption{$\mu_4  = \pi^2,\, y=\cos (\pi x)$ }
    \label{fig:6.5_sub_d}
  \end{subfigure}\hfill
  \begin{subfigure}[b]{0.32\textwidth}
    \centering
    \includegraphics[width=\linewidth]{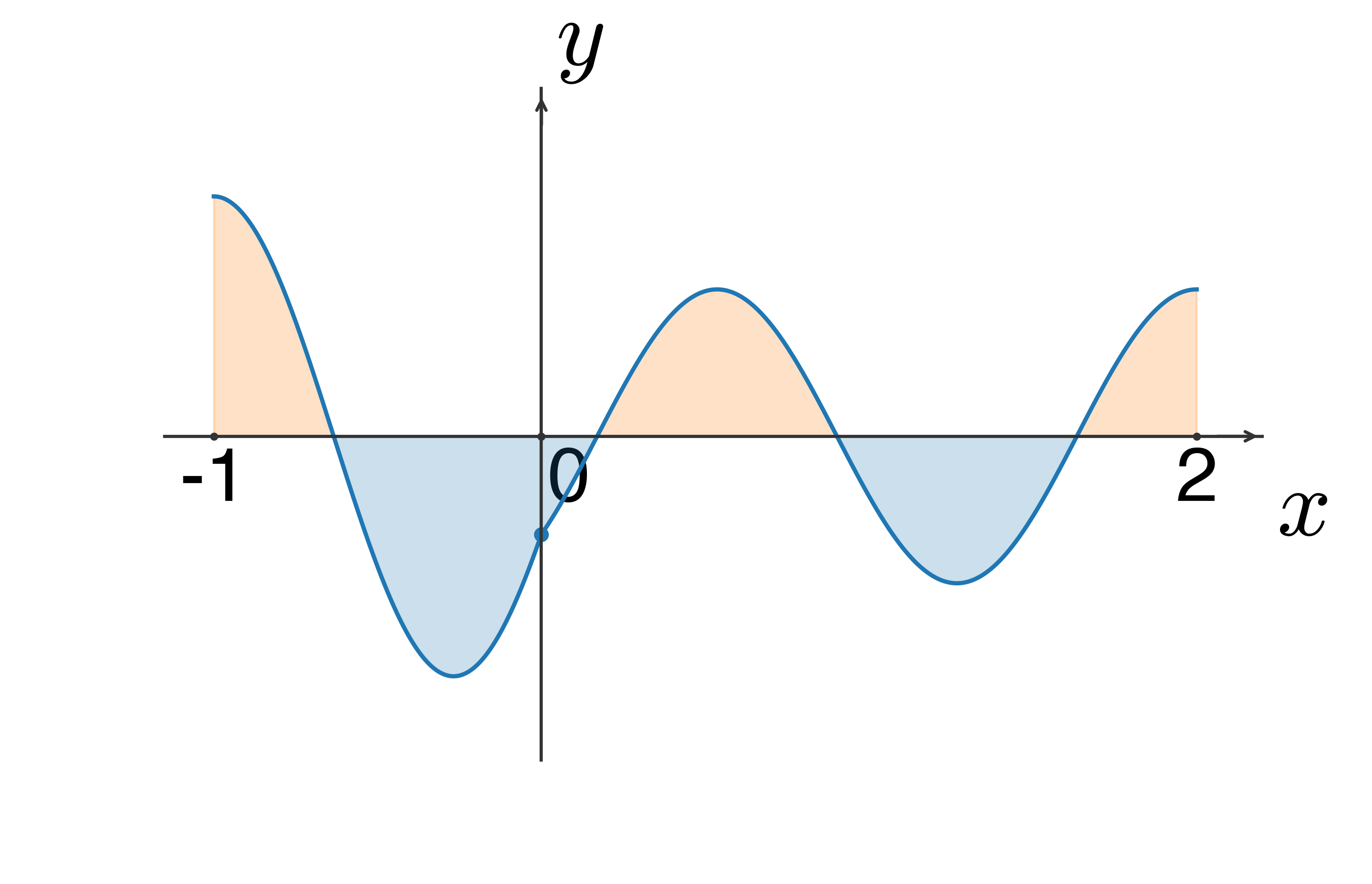}
\caption{$\mu_5  = (\eta_2)^2,\, y=v_2^{(2)}(x)$ }
    \label{fig:6.5_sub_e}
  \end{subfigure}\hfill
  \begin{subfigure}[b]{0.32\textwidth}
    \centering
    \includegraphics[width=\linewidth]{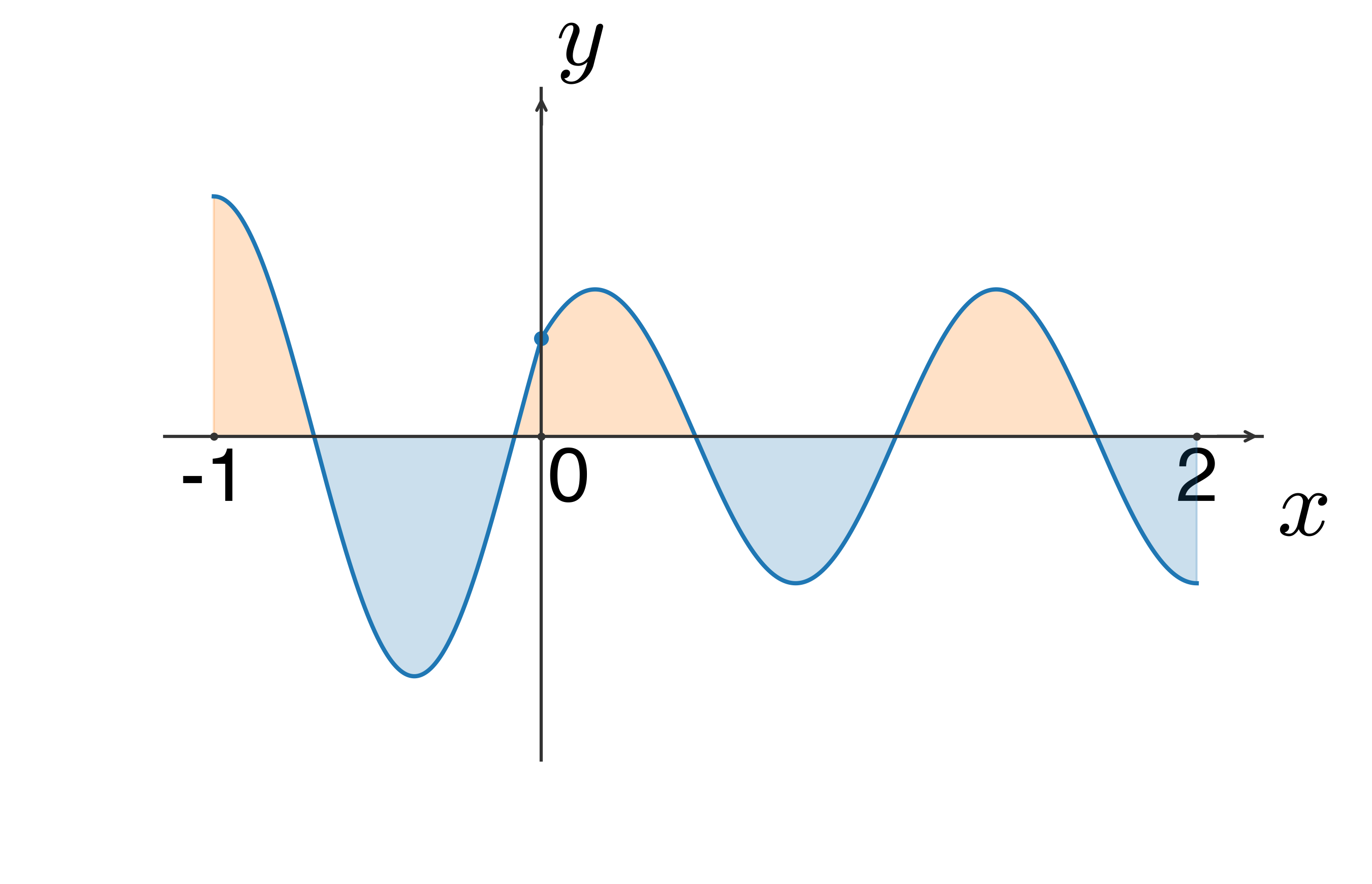}
\caption{$\mu_6  = (\eta_3)^2,\, y= v_3^{(2)}(x)$ }
    \label{fig:6.5_sub_f}
  \end{subfigure}

  \caption{Plots of  the first six eigenfunctions of \eqref{eq:6.5} with $\alpha=2$}
  \label{fig:6.5}
\end{figure}

%
\end{enumerate}

Observation (vi) motivates a Sturm‑Liouville oscillation principle for the following one‑dimensional interface problems on $\Omega_1=(-a,0)$ and $\Omega_2=(0,b)$, where $a,\,b>0$:
\begin{equation}\label{eq:6.15}
\begin{cases}
-(a_i(x)u_i')'+c_i(x)u_i=\lambda u_i,\quad x\in\Omega_i,\\[2pt]
a_i(0)u_i'(0)=\gamma_i\big(u_2(0)-u_1(0)\big), \\[2pt]
u_1'(-a)=u_2'(b)=0,
\end{cases}i=1,2,
\end{equation}
and
\begin{equation}\label{eq:6.16}
\begin{cases}
-(a_i(x)u_i')'+c_i(x)u_i=\mu u_i,\quad x\in\Omega_i,\\[2pt]
a_1(0)u_1'(0)=\alpha\,a_2(0)u_2'(0),\quad u_1(0)=u_2(0),\\[2pt]
u_1'(-a)=u_2'(b)=0,
\end{cases} i=1,2,
\end{equation}
where $a_i\in C^1(\overline\Omega_i)$, $a_i(x)\ge a_0$ on $\overline\Omega_i$ for some constant $a_0>0$, $c_i\in C(\overline\Omega_i)$, and  $\gamma_1,\gamma_2,\alpha>0$.

For \eqref{eq:6.15} with $a_i\equiv 1$, $c_i\equiv 0$, and $\gamma_1=\gamma_2$, simplicity and the exact nodal count are included in the results of Moutal and Grebenkov \cite[Supplementary Material, Sections SM III.1 and SM III.4]{MoutalGrebenkov2019}, which allow multiple semipermeable barriers and count sign changes across jumps. The following proposition treats the two-subdomain problem with variable coefficients and possibly unequal transmission parameters, as well as the continuous-trace problem \eqref{eq:6.16}.

\begin{prop} For problems \eqref{eq:6.15} and \eqref{eq:6.16}, each eigenspace is one-dimensional 
and the $k$-th eigenfunction has exactly $k$
nodal domains, hence exactly $k-1$ sign changes (with the convention
        that a sign change across $x=0$ is counted when the two one-sided values have opposite signs
).
\end{prop}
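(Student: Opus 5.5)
The plan is a one-dimensional shooting argument. It gives simplicity directly and combines the Courant bound of Theorem \ref{thmnodal} (via Proposition \ref{rem-nodal-generalization}, since $a_i\in C^1$ is Lipschitz) with a lower bound on the number of sign changes. In one dimension the nodal domains of Definition \ref{def-nodal} are intervals separated by interior zeros and, possibly, by the point $0$ when $u_1(0)u_2(0)\le 0$. Hence $\omega(\mathbf u)$ equals the number of sign changes plus one, and it suffices to show that the $k$-th eigenfunction has at least $k-1$ sign changes.

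\textbf{Simplicity.} For \eqref{eq:6.15}, any eigenfunction is determined by $u_1$ on $(-a,0]$. Indeed, $u_1$ is the unique solution with $u_1'(-a)=0$ and prescribed $u_1(-a)$. The interface condition for $i=1$ then gives $u_2(0)=u_1(0)+a_1(0)u_1'(0)/\gamma_1$, and the condition for $i=2$ gives $a_2(0)u_2'(0)=(\gamma_2/\gamma_1)a_1(0)u_1'(0)$. So $u_2$ is fixed by its Cauchy data at $0$. The eigenspace is therefore parametrized by the single number $u_1(-a)$, and $u_1(-a)\neq0$ for a nontrivial eigenfunction. For \eqref{eq:6.16} the same reasoning applies with Cauchy data $u_2(0)=u_1(0)$ and $a_2(0)u_2'(0)=a_1(0)u_1'(0)/\alpha$. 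Thus all eigenvalues are simple, and $\lambda_1<\lambda_2<\cdots$ (resp.\ $\mu_1<\mu_2<\cdots$).

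\textbf{Prüfer angle and sign changes.} Let $u(\cdot;\lambda)$ be the solution normalized by $u_1(-a)=1$, continued across $0$ by the transfer rules above. On each $\Omega_i$ introduce the Prüfer angle $\theta_i$ via $u_i=r\sin\theta$, $a_iu_i'=r\cos\theta$, with $\theta_1(-a)=\pi/2$. Standard Sturm comparison shows that $\theta_i(x;\lambda)$ is strictly increasing in $\lambda$ for $x$ in the interior. It also crosses multiples of $\pi$ only increasingly, and each such crossing is an interior zero.

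At the interface the transfer map sends $(u_1,a_1u_1')$ to $(u_2,a_2u_2')$ by a matrix with positive determinant. For \eqref{eq:6.15} the matrix is $\begin{pmatrix}1&1/\gamma_1\\0&\gamma_2/\gamma_1\end{pmatrix}$; for \eqref{eq:6.16} it is $\mathrm{diag}(1,1/\alpha)$. Such a map is orientation preserving and acts continuously on angles, so it induces an increasing lift $\theta_2(0)=T(\theta_1(0))$ with $T(m\pi)=m\pi$. For \eqref{eq:6.15}, a sign change across $0$ without a zero, i.e.\ $u_1(0)u_2(0)<0$, corresponds exactly to the lifted angle crossing a multiple of $\pi$ inside the transfer. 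So sign changes are counted by $\lfloor \theta_2(b)/\pi\rfloor$, up to the convention at $\theta\in\pi\mathbb Z$, which is handled by noting $u_i(0)=0$ is itself a zero.

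\textbf{Counting argument.} The eigenvalue condition is $\theta_2(b;\lambda)\equiv\pi/2 \pmod\pi$. As $\lambda\to-\infty$, $\theta_2(b)\to$ a value in $(0,\pi/2]$ region, giving at most zero crossings. As $\lambda\to\infty$, $\theta_2(b)\to\infty$. Strict monotonicity in $\lambda$ then implies that the $k$-th eigenvalue is attained when $\theta_2(b)=\pi/2+(k-1)\pi$, which gives exactly $k-1$ sign changes. Alternatively, Proposition \ref{rem-nodal-generalization} already gives $\le k$ nodal domains. For the lower bound, consecutive eigenfunctions have strictly increasing angle at $b$ by monotonicity, so their sign-change counts strictly increase. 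Since the first eigenfunction has none (Theorems \ref{thm1}, \ref{thm2}(iii)), induction gives at least $k-1$.

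\textbf{Main obstacle.} The hardest step is rigorously establishing monotonicity of $\theta_2(b;\lambda)$ through the interface. This requires that $T$ be a strictly increasing continuous lift independent of $\lambda$, which holds because the transfer matrix has positive determinant and does not depend on $\lambda$, and that the transfer does not destroy the crossing direction. I would verify both by a direct computation of $\theta$ under the linear map, checking that the image of the ray at angle $\theta$ rotates monotonically with $\theta$.
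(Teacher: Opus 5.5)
Your Pr\"ufer-angle route is viable and differs from the paper's. However, the interface step is false as stated for \eqref{eq:6.15}, and that is exactly where the jumps enter.

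\textbf{The normalization of $T$.} No increasing lift satisfies $T(m\pi)=m\pi$. In your coordinates $(u,au')=r(\sin\theta,\cos\theta)$, the direction $\theta=0$, i.e.\ $(0,1)$, is sent by $\begin{pmatrix}1&1/\gamma_1\\0&\gamma_2/\gamma_1\end{pmatrix}$ to $(1/\gamma_1,\gamma_2/\gamma_1)$. Its angle is $\arctan(1/\gamma_2)\in(0,\pi/2)$, so every lift has $T(0)\equiv\arctan(1/\gamma_2)\not\equiv0 \pmod{\pi}$.

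Moreover, if an increasing $T$ did fix every $m\pi$, it would keep $\theta_1(0)$ and $T(\theta_1(0))$ in the same interval $(m\pi,(m+1)\pi)$. Then $u_1(0)$ and $u_2(0)$ would always share a sign, which contradicts your next sentence. It is also simply false: if $\theta_1(0)$ is slightly below $\pi$, then $u_1(0)>0$ but $u_2(0)=u_1(0)+a_1(0)u_1'(0)/\gamma_1<0$.

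The direction actually fixed by the transfer matrix is $(1,0)$, i.e.\ $\theta=\pi/2$, so you must normalize $T(\pi/2+m\pi)=\pi/2+m\pi$. Then:
\begin{itemize}
\item $T$ maps each interval $[\pi/2+(m-1)\pi,\pi/2+m\pi]$ onto itself, and $T(m\pi)=m\pi+\arctan(1/\gamma_2)>m\pi$;
\item consequently the jump $\theta_1(0)\mapsto T(\theta_1(0))$ crosses at most one multiple of $\pi$, only upward;
\item it does so exactly when $u_1(0)\neq0$ and $u_1(0)u_2(0)\le0$.
\end{itemize}
This is the ``crossing direction'' property you deferred. It is what justifies counting sign changes by $\lfloor\theta_2(b)/\pi\rfloor$.

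With this lift, $\theta_1(0;\lambda)\to0^+$ gives $T(\theta_1(0;\lambda))\to\arctan(1/\gamma_2)$, hence $\theta_2(b;\lambda)\to0^+$ as $\lambda\to-\infty$. So the $k$-th eigenvalue is the one with $\theta_2(b)=\pi/2+(k-1)\pi$, and the rest of your counting goes through. For \eqref{eq:6.16} your normalization is correct, since $\mathrm{diag}(1,1/\alpha)$ fixes both $\theta=0$ and $\theta=\pi/2$ modulo $\pi$.

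\textbf{Comparison with the paper.} Once repaired, your argument takes a different route. The paper does not track angles. It rescales the Wronskian $W_i=a_i(u_iv_i'-v_iu_i')$ so that it is continuous at $x=0$: $W_i/\gamma_i$ for \eqref{eq:6.15}, and $W_1/\alpha$, $W_2$ for \eqref{eq:6.16}. By Sturm comparison it then shows that a higher eigenfunction changes sign in every nodal domain of a lower one, with a separate case analysis for nodal domains ending at the jump. The upper bound is imported from Theorem~\ref{thmnodal}.

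Your Pr\"ufer count gives both bounds at once by oscillation counting, so you never need the Courant bound; your ``Alternatively'' paragraph becomes redundant. The price is choosing the lift correctly. The paper's Wronskian avoids lifts but needs the interface case analysis.
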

\begin{proof}[{\bf Proof.}] Each eigenspace is one-dimensional because the solution on $\Omega_1$ is uniquely determined by its value at $x=-a$ together with the Neumann condition there; the
transmission conditions then uniquely determine the solution on $\Omega_2$.


It remains to prove the exact number of nodal domains. Let $\lambda_u<\lambda_v
$ be any two eigenvalues of \eqref{eq:6.15} or of \eqref{eq:6.16}, and let $\mathbf{u}=(u_1,u_2)$ and $\mathbf{v}=(v_1,v_2)$ be the corresponding eigenfunctions.

On each subinterval $\Omega_i$, define 
\begin{equation}\label{eq:6.18}
W_i(x) =a_i(x)\bigl(u_i(x)v_i'(x)-v_i(x)u_i'(x)\bigr).
\end{equation}
Since
\begin{equation}
-(a_iu_i')'+c_iu_i=\lambda_uu_i,
\qquad
-(a_iv_i')'+c_iv_i=\lambda_vv_i,
\end{equation}
we have
\begin{equation}\label{eq:6.17}
W_i'(x)
   =(\lambda_u-\lambda_v)u_i(x)v_i(x). 
\end{equation}

Let $D=(x_1,x_2)$ be an arbitrary nodal domain of $\mathbf{u}$. Without loss of generality, assume that $D\cap\Omega_1\neq\emptyset$ and  $\mathbf{u}>0$ on $D$. We examine the sign of $\mathbf{v}$ on $D$ in three cases.

(i) $x_1<x_2<0$.  This is the standard Sturm argument. Since $x_2$ is an interior endpoint of $D$, we have $u_1(x_2)=0$ with $u_1'(x_2)\le 0$. If $x_1$ is also an interior endpoint of $D$ (i.e., $x_1>-a$), then $u_1(x_1)=0$ with $u_1'(x_1)\ge 0$. If $x_1$ is an exterior endpoint (i.e., $x_1=-a$), the Neumann condition gives $u_1'(x_1)=0$.

Assume that $\mathbf{v}$ does not change sign in $D$.  After changing the sign of $\mathbf{v}$, we may suppose that $\mathbf{v}>0$ on $D$. Then $W_1(x_1)\le 0\le W_1(x_2)$, while \eqref{eq:6.17} gives $W_1'<0$ on $D$, a contradiction. Hence $\mathbf{v}$ changes sign at least once in $D$.

(ii) $x_1<0<x_2$. Again assume that $\mathbf{v}>0$ on $D$. For
\eqref{eq:6.15}, define
\[
\widehat W_i=\frac{W_i}{\gamma_i}.
\]
The transmission conditions in \eqref{eq:6.15}$_2$ give
\[
\widehat W_1(0)=\widehat W_2(0).                  
\]
Thus the normalized Wronskian can be continued across the
interface. Since $\widehat W_i'<0$ on both sides of $x=0$, 
the normalized Wronskian is strictly decreasing on $D$. This contradicts the endpoint inequalities 
$\widehat W_1(x_1)\le 0\le \widehat W_2(x_2)$.


For problem \eqref{eq:6.16}, the argument is identical. The only
difference is to define
\[
\widehat W_1=\frac{W_1}{\alpha},\qquad
\widehat W_2=W_2.
\]
Then
\[
\widehat W_1(0)=\widehat W_2(0)
\]
follows from the transmission conditions in \eqref{eq:6.16}$_2$. Thus, for both \eqref{eq:6.15} and \eqref{eq:6.16}, $\mathbf{v}$ changes sign  at least once in $D$.

(iii) $x_2=0$, i.e., the interface point $x=0$ is the common boundary of the nodal domain $D$ and another nodal domain $\tilde{D}=(0, x_3)$ for some $x_3\in (0,b]$. Again we assume that $\mathbf{v}>0$ on $D$.

For problem \eqref{eq:6.16}, at $x_2=0$ we have $u_1(0)=0$ and $u_1'(0) < 0$, while $v_1(0)\ge 0$. Thus $x_2=0$ behaves as an ordinary interior right-hand endpoint of $D$, so $W(0)\ge 0$. This yields the same contradiction as in case (i). Hence $\mathbf{v}$ changes sign  at least once in $D$.

For problem \eqref{eq:6.15}, from $W_1'<0$ on $D=(x_1,0)$ and $W_1(x_1)\le 0$, we must have $W_1(0)<0$. Since $x=0$ is the common boundary of the nodal domains $D$ and $\tilde D$, we see that $u_2(0)\le 0$, and thus by \eqref{eq:6.15}$_{2}$ we further deduce that $u_1'(0)<0$. Together with $v_1(0)\ge 0$, $u_1(0)\ge 0$, and $W_1(0)<0$, this implies that $v_1'(0)<0$ and $u_1(0)>0$. By \eqref{eq:6.15}$_{2}$, we then have 
$W_2(0)<0$, $u_2'(0)<0$, and $v_1'(0)<0$, which, together with $u_2(0)\le 0$, shows that $v_2(0)<0$. Thus $x=0$ is also the common boundary of two nodal domains of $\mathbf{v}$.

Now suppose that $\mathbf{v}$ does not change sign in $\tilde D$, i.e., $v_2<0$ in $\tilde D$. Then \eqref{eq:6.17} gives $W'_2<0$ in $\tilde D$. However, as in case (i), we have $W_2(x_3)\ge 0$, which is impossible since $W_2(0)<0$. Thus $\mathbf{v}$ changes sign at least once in $\tilde D$.

In summary, for problem \eqref{eq:6.16}, $\mathbf{v}$  changes sign at least once in every nodal domain of $\mathbf{u}$. For problem \eqref{eq:6.15}, each nodal domain $D$ has the following property: either $\mathbf{v}$ changes sign at least once in $D$, or $D$ has $x=0$ as an endpoint, $\mathbf{v}$ changes sign 
across the interface, and $\mathbf{v}$ also changes sign at least once in the adjacent nodal domain of $\mathbf{u}$.


Together with the fact that the first eigenfunction does not change sign and the nodal bounds in Theorem \ref{thmnodal}, this property establishes the desired Sturm-Liouville oscillation principle.
\end{proof}


\backmatter

\section*{Statements and Declarations}

\bmhead{Funding}
The research of the authors was supported by the National Natural Science Foundation of China (Grant Nos. 12071203, 12371214, and 12371216).

\bmhead{Competing interests}
The authors declare that they have no competing interests.

\bmhead{Data availability}
Data availability is not applicable to this article as no datasets were generated or analyzed during the current study.

\bibliographystyle{spmpsci}
\bibliography{eigenvalue2}

\end{document}